\theoremstyle{definition}
\newtheorem{defi}{Definition}
\newtheorem{obs}{Observation}
\newtheorem{rmk}{Remark}
\theoremstyle{plain}
\newtheorem{theo}{Theorem}
\newtheorem{prop}{Proposition}
\newtheorem{lema}{Lemma}
\newcommand{\C}{\mathbb{C}}
\newcommand{\R}{\mathbb{R}}
\newcommand{\Z}{\mathbb{Z}}
\newcommand{\T} {\mathbb{T}}
\newcommand{\vd} {\mathbf M}
\newcommand{\s}{\mathbb{S}}
\newcommand{\tange} {\mathbf{T}}
\newcommand{\noi}{{\noindent}}
\newcommand{\Lam}{\mathbf{\Lambda}}
\newcommand{\K} {\mathcal{K}}
\newcommand{\cv}{\mathcal{X}}
\newcommand{\cvd}{\mathcal{Y}}
\newcommand{\U} {\mathcal{U}}
\newcommand{\V} {\mathcal{V}}
\newcommand{\ls}{\mathcal{L}}
\newcommand{\lam}{\pmb{\lambda}}
\newcommand{\MLamU}{{\mathcal M}^{^{(\mathbf{\Lambda},m,n)}}_{_1}}
\newcommand{\MLam}{{\mathcal M}^{^{(\mathbf{\Lambda},m,n)}}}
\newcommand{\Mg}{\vd^{^{(\mathbf{\Lambda},1,n+s)}}_{_1}}
\newcommand{\Mgo}{\vd^{^{(\mathbf{\Lambda},1,n+s)}}}
\newcommand{\MGo}{\vd^{^{(\mathbf{\Lambda},m,n)}}}
\newcommand{\MG}{\vd^{^{(\mathbf{\Lambda},m,n)}}_{_1}}
\newcommand{\Mgu}{\vd^{^{(\mathbf{\Lambda},1,n+1)}}_{_1}}
\newcommand{\MgU}{\vd^{^{(\mathbf{\Lambda},1,n+s-1)}}_{_1}}
\newcommand{\ev}{\mathcal{H}}
\begin{document}

\title {Moment-angle manifolds, intersection of quadrics and higher dimensional contact manifolds\thanks{Partially supported by project PAPIIT-DGAPA IN100811 and project CONACyT 129280.}}
\author{Yadira Barreto\\ and \\ Alberto Verjovsky }
\date{}

\maketitle

\begin{abstract}
\noi We construct new examples of contact manifolds in arbitrarily large  dimensions. These manifolds which we call {\it moment-angle mani\-folds of mixed type}, are closely related to the classical moment-angle manifolds. 

\end{abstract}

\noi {\bf Key Words:} Moment-angle manifolds of mixed type,  moment-angle manifolds, po\-si\-tive confoliations, Poisson and contact structures.\\

\noi  {\bf AMS subject classification:} Primary 53D10, 53D35. Secondary 57R17, 53D17, 14M25.

\section{Introduction.}

\noi The topology of intersection of quadrics in $\C^n$ of the form:

\begin{eqnarray}\label{E1}
H\big(z_1,\dots,z_n\big):=\sum_{j=1}^n\lam_j|z_j|^2=0,\\
\nonumber \rho\big(z_1,\dots,z_n\big):=\sum_{j=1}^n|z_j|^2=1,
\end{eqnarray}

\noi where $\lam_j=\big(\lambda^1_{_j},\dots,\lambda^m_{_j}\big)\in\C^m$ for $m\geqslant 1$, $n>3$ such that  $n>2m$  and the configuration $\Lam=\big(\lam_1,\dots,\lam_n\big)$ satisfying an {\it admissibility condition},  has been studied by di\-fferent authors. For instance, S. L\'opez de Medrano and A. Verjovsky in \cite{LV} studied the case  $m=1$. In this case, these intersections are diffeomorphic  to a triple product of spheres or to the connected sum of manifolds which are the product of two spheres of varying dimensions. The genera\-li\-zation to $m>1$ is due to L. Meersseman in \cite{Meer} in which it is remarked their very beautiful and rich geometry and topology (see also the paper by S. Gitler and S. L\'opez de Medrano \cite{LoGli}).\\

\noi For every $m\geq1$ these manifolds are known as \emph{moment-angle manifolds} and we denote them by $\MLamU$. They are principal circle bundles over compact complex manifolds which are never K\"ahler  except in few cases (see \cite{LV} and \cite{Meer}). These manifolds have been extensively studied by V. Buchstaber and T. Panov, among others (see \cite{BBCG}, \cite{BP} and \cite{DJ} for instance).\\

\noi Moment-angle manifolds are also interesting since they have the structure of open books, and they admit different geometric structures such as: almost contact structures and quasi contact structures (see  \cite{LoGli} and \cite{BLV}).   
Moment-angle manifolds admit a locally free action of $\R^{2k}$ ($1\leq{k}\leq{m}$) (see \cite{Meer}) and therefore the orbits determine a symplectic foliation in a natural way and as a consequence  they are regular Poisson manifolds (see \cite{IV}).\\

 \noi In \cite {MV} it was shown that for every $m\geq1$ the compact complex manifold (which in general is non K\"ahler), obtained as the orbit space of the principal circle action on a moment-angle manifold, admits a locally-free holomorphic action of $\C^m$.  Furthermore the $m$-dimensional holomorphic foliation $\mathcal{F}_h$ obtained by this action is a transversally K\"ahler foliation (see \cite{LN}).\\
 
 \noi  Under a rationality condition on the configuration of vectors $\Lam$  the leaf space of  $\mathcal{F}_h$ is Hausdorff and it is a toric algebraic variety or a toric algebraic or\-bifold. In addition, {\it eve\-ry toric manifold (or orbifold with quotient singulari\-ties) is obtained by this construction}. The set of configurations satisfying the rationality condition is dense in the set of configurations.\\
 
 \newpage
 
 \noi A fundamental paper intertwining real quadrics in $\C^n$, complex manifolds and combinatorics of polytopes is the paper of L. Meersseman and F. Bosio  \cite{BM}  in {\it Acta Mathematica}. \\

 \noi In section \ref{S3} we define a generalization of moment-angle manifolds, which we call \emph{moment-angle manifolds of mixed type}\footnote {These manifolds are called  \emph{moment-angle manifolds of mixed type}, using the notation of Davis and Januszkiewicz \cite{DJ}, since they admit the action $\big(\Z/2\Z\big)^s \times \big(\s^1\big)^n$ for $m=1$ and $\big(\Z/2\Z\big)^m \times \big(\s^1\big)^n$ for $m>1$.} obtained by the intersection of the following non coaxial quadrics (see \cite{GL} for case $m=1$): 
 \begin{itemize}
\item For $m=1$ and $s\geq1$:
 $$
 \sum_{r=1}^s w^2_r+\sum_{j=1}^n\lambda_j|z_j|^2=0,\quad\quad \sum_{r=1}^s|w_r|^2+\sum_{j=1}^n|z_j|^2=1,
 $$
 \noi where $\lambda_j\in\C$ for all $j\in\{1,\dots,n\}.$

\item For $m>1$:
$$
\pmb{w}^2+\sum_{j=1}^n\lam_j|z_j|^2=0,\quad \quad \sum_{r=1}^m|w_r|^2+\sum_{j=1}^n|z_j|^2=1,$$
\noi where $\pmb{w}^2:=\big(w_1^2,\dots,w_m^2\big)\in\C^m$ with $w_r\in\C$ ($1\leq{r}\leq{m}$) and $\lam_j=\big(\lambda^1_{_j},\dots,\lambda^m_{_j}\big)\in\C^m$ for all $j\in\{1,\dots,n\}$.
\end{itemize}

\noi In the first case, the manifolds obtained are connected sums of pro\-ducts\- of two spheres  (see \cite{GL}) and also admit an open book decomposition (see \cite{LoGli}, \cite{SLM1} and \cite{BLV}).\\

\noi Observe that when $s=0$ and $\pmb{w}^2=(0,\dots,0)\in\C^m$,  in the first and second case, respectively, we obtain a moment-angle manifold $\MLamU$.\\

\noi  In section \ref{S3} we introduce an idea originally conceived by S. J Altschuler in \cite{Alt} and generalized by S. J. Altschuler and F. Wu in \cite{AltWu} about conductive confoliations, i.e.,  confoliations that are able to conduct  \emph{contactness}  to all points of a manifold via paths with certain characteristics (essentially by Legendrian curves when the 1-form is a contact form).\\ 

\noi We will show that the canonical $1$-form in $\C^N$ given by
$$
\alpha=i\sum_{j=1}^N\big(z_jd\bar{z}_j-\bar{z}_jdz_j\big)
$$

\noi when restricted to the tangent bundle of a moment-angle manifold is non\-tri\-vial and $\big[\ker(\alpha)\cap\ker(d\alpha)\big]$ is a regular Poisson structure whose symplectic foliation is tangent to a locally free action of an $2m$-torus $\T^{2m}=\s^1\times\dots\times\s^1$. On the other hand, for a moment-angle manifold of mixed type this real $1$-form can be deformed into a contact form, by an arbitrarily small $C^{^\infty}$ perturbation.\\
 
\noi Our construction is in some sense explicit  since is the instantaneous  diffusion through the heat flow of an explicit 1-form which is a positive confoliation.\\

\noi In section \ref{S4} we give some concluding remarks indicating that moment-angle manifolds of mixed type admit an action
of a torus such that the orbit space is a polytope which is the product of two simple polytopes and as a consequence their topology can be determined by
combinatorics and algebra associated to polytopes (see \cite{BP}, \cite{BP2} and \cite{BBCG}).

\section{Moment-angle manifolds.}\label{S2}

\noi Let  $m$ and $n$ be two positive integers such that $m\geqslant 1$, $n>3$ and $n>2m$. Let $\Lam=\big(\lam_1,\dots,\lam_n\big)$  be an $n$-tuple of vectors in  $\C^m$,  with $\lam_j=\big(\lambda^1_{_j},\dots,\lambda^m_{_j}\big)$ for all  $j\in\{1,\dots,n\}$. Let $\ev(\Lam)$ the convex hull of $\Lam$ in $\C^m$.

\begin{defi}\label{D1}\cite{Meer}
An  \emph{admissible configuration} is an $n$-tuple $\Lam$ satisfying the fo\-llo\-wing:

\begin{enumerate}
\item  {\bf Siegel condition:} $0\in \ev(\Lam)$.
\item {\bf Weak hyperbolicity condition:} for every $2m$-tuple of integers\\ $\left\{j_1,\dots,j_m\right\}$ such that $1\leqslant j_1<\dots<j_{2m}\leqslant n$, we have  
$$
0\notin \ev\big(\lam_{j_1},\dots,\lam_{j_{2m}}\big).$$
\noi In other words: The convex polytope $\ev(\Lam)$ contains $0$, but any hyperplane passing through $2m$ vertices does not contains $0$.
\end{enumerate}
\end{defi}

\noi Every admissible configuration satisfies the following regularity property:

\begin{lema}\cite[Lemma I.1]{Meer}\label{L1}
Let $\lam'_i=\big(\lam_i,1\big)\in\C^{m+1}$ with $i\in\{1,\dots,n\}$. For all set of integers $J$ between $1$ and $n$ such that $0\in\ev\big((\lam_j)_{j\in J}\big)$, the complex rank of the matrix whose columns are the vectors $\big(\lam'_j\big)_{j\in J}$ is equal to $m+1$, therefore maximal.
\end{lema}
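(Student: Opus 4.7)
The plan is to argue by contradiction, converting a hypothetical drop in rank into a codimension-one complex subspace containing $(\lam_j)_{j\in J}$, and then invoke Carathéodory to contradict weak hyperbolicity.

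First I would suppose that the complex rank of the matrix whose columns are $\lam'_j = (\lam_j, 1)$, $j \in J$, is at most $m$. Then the columns span a proper complex subspace of $\C^{m+1}$, so there is a nonzero row vector $(a_1,\dots,a_m,b) \in \C^{m+1}$ annihilating every column, i.e.
\[
\sum_{k=1}^m a_k \lambda^k_j + b = 0 \quad \text{for all } j \in J.
\]
Equivalently, the complex-linear functional $L(z) = \sum_k a_k z^k$ satisfies $L(\lam_j) = -b$ for every $j \in J$.

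Next I would extract $b$ using the Siegel-type hypothesis on $J$. Since $0 \in \ev\bigl((\lam_j)_{j\in J}\bigr)$, write $0 = \sum_{j \in J} t_j \lam_j$ with $t_j \geq 0$ and $\sum t_j = 1$. Applying the $\C$-linear functional $L$ gives
\[
0 = L(0) = \sum_{j \in J} t_j L(\lam_j) = -b \sum_{j\in J} t_j = -b,
\]
so $b = 0$. Since $(a,b) \neq 0$, the vector $a = (a_1,\dots,a_m)$ is nonzero, hence $L$ is a nontrivial complex-linear functional on $\C^m$. Therefore $(\lam_j)_{j\in J} \subset \ker L$, a complex hyperplane of complex dimension $m-1$, i.e.\ of real dimension $2m-2$.

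Finally I would invoke Carathéodory's theorem in the real affine space $\ker L$: since $0$ lies in the convex hull of $(\lam_j)_{j\in J}$ inside a real affine space of dimension $2m-2$, there exist at most $(2m-2)+1 = 2m-1$ indices $j_1 < \dots < j_r \in J$ (with $r \leq 2m-1$) such that $0 \in \ev(\lam_{j_1},\dots,\lam_{j_r})$. Using $n > 2m$, extend this tuple to a $2m$-tuple $1 \leq j_1 < \dots < j_{2m} \leq n$; trivially $0 \in \ev(\lam_{j_1},\dots,\lam_{j_{2m}})$, contradicting the weak hyperbolicity condition.

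The only genuinely delicate step is the transition from $L$ being complex-linear to a real-convex-geometric conclusion; the key observation is that even though the convex combination uses real coefficients, complex-linearity is enough to propagate the value $-b$ through the sum, forcing $b=0$. After that, counting real dimensions and applying Carathéodory is the clean payoff.
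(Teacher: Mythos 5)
Your argument is correct and complete: the rank-drop assumption produces a nonzero functional $(a,b)$, the Siegel-type hypothesis $0\in\ev\big((\lam_j)_{j\in J}\big)$ forces $b=0$ because a complex-linear map passes through real convex combinations, and then Carath\'eodory in the real $(2m-2)$-dimensional space $\ker L$ together with $n>2m$ yields a $2m$-tuple whose hull contains $0$, contradicting weak hyperbolicity. Note that the paper itself offers no proof of this lemma --- it simply cites Meersseman's Lemma I.1 --- so your write-up supplies a valid self-contained justification along the standard lines, and the only point worth making explicit is that the extension of the $r\leq 2m-1$ Carath\'eodory indices to a $2m$-tuple of distinct indices uses $n>2m$ and the monotonicity of convex hulls, both of which you do invoke.
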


\noi Let now $m\geq1$, $n>3$ and $n>2m$.\\

\noi Let $\Lam=\big(\lam_1,\dots,\lam_n\big)$ be an $n$-tuple of vectors in $\C^m$, with $\lam_j=\big(\lambda^1_{_j},\dots,\lambda^m_{_j}\big)$ for all $j\in\{1,\dots,n\}$. Consider the following system of  quadratic equations:

\begin{equation*}
F_1(Z):=\sum_{j=1}^n\lambda^1_{_j}|z_j|^2=0,
\end{equation*}
\begin{equation}\label{E2}
\vdots
\end{equation}
\begin{equation*}
F_m(Z):=\sum_{j=1}^n\lambda^m_{_j}|z_j|^2=0,
\end{equation*}
\begin{equation}\label{E3}
\rho(Z):=\sum_{j=1}^n|z_j|^2=1,
\end{equation}
\noi where $Z=\big(z_1,\dots,z_n\big)$.\\

\noi The weak hyperbolicity condition implies that if $Z=\big(z_1,\dots,z_n\big)$ is a point on the manifold defined by system \eqref{E2} and equation \eqref{E3},  then at least $2m+1$ coordinates of $Z$ are diffe\-rent from zero,  which in turn implies that the rank of the jacobian matrix of the system  \eqref{E2},  is maximal.\\

\noi It has been shown in \cite{Meer} that the intersection of the hypersurfaces defined by this quadratic equations is a manifold of real dimension $2n-2m-1$  that we denote by $\MLamU$. The manifold $\MLamU$ is called \emph{moment-angle manifold}.\\

\noi We denote by $\MLam$ the manifold in $\C^n-\big\{(0,\dots,0)\big\}$ obtained by  system  \eqref{E2}. $\MLam$ is indeed a manifold since it is shown in \cite{Meer} that if the admi\-ssibility condition is satisfied,  the only singularity of the variety given by \eqref{E2} is the origin. 

\begin{rmk}\label{R1}
The moment-angle manifold $\MLamU$admits a canonical action of $\big(\s^1\big)^n$ :
$$
\varphi_{_{\left(u_1,\dots,u_n\right)}}\big(z_1,\dots z_n\big)=\big(u_1z_1\dots,u_n z_n\big),
$$
where $|u_i|=1$ for all $i$. There exists a subtorus $\T^{2m+1}$ which acts locally-free.\\

\noi The orbit space of this action is a full simple polytope of dimension $n-2m-1$. For these facts see \cite{Meer}.
\noi \end{rmk}

\noi Consider, for $a_j$ a positive integer with $j\in\{1,\dots,n\}$,  the following real $1$-form on $\MLamU$:
$$
\alpha:=i\sum_{j=1}^n\Big[a_j\big(z_jd\bar{z}_j-\bar{z}_jdz_j\big)\Big].
$$

\begin{lema}\label{L2}
  For all $Z=\big(z_1,\dots,z_n\big)\in\MLamU$ the $1$-form $\alpha$ restricted to the tangent space $\tange_Z\left(\MLamU\right)$ is a nontrivial form.
\end{lema}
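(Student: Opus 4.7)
The plan is to exhibit, at every point $Z \in \MLamU$, a single tangent vector on which $\alpha$ takes a nonzero (indeed strictly positive) value. The natural candidate is the infinitesimal generator of the diagonal circle action
$$
\varphi_t(z_1,\dots,z_n) = (e^{it}z_1,\dots,e^{it}z_n),
$$
which is part of the canonical $(\s^1)^n$-action on $\MLamU$ described in Remark \ref{R1}.

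First I would verify that the vector field
$$
V_Z = i\sum_{j=1}^n\bigl(z_j\partial_{z_j} - \bar z_j\partial_{\bar z_j}\bigr)
$$
is tangent to $\MLamU$. This is immediate because each defining function $F_k$ and $\rho$ depends only on $|z_j|^2$, so they are invariant under the full torus $(\s^1)^n$ and in particular under the diagonal circle; hence $V$ annihilates all of them and lies in $\tange_Z(\MLamU)$ at every $Z$.

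Next I would simply compute $\alpha(V)$. Using $dz_j(V)=iz_j$ and $d\bar z_j(V)=-i\bar z_j$, one gets
$$
\alpha(V_Z) \;=\; i\sum_{j=1}^n a_j\bigl[z_j(-i\bar z_j)-\bar z_j(iz_j)\bigr] \;=\; 2\sum_{j=1}^n a_j|z_j|^2.
$$
Since each $a_j$ is a positive integer and $Z\in\MLamU$ implies $\sum |z_j|^2 = 1 > 0$, the sum on the right is strictly positive. Therefore $\alpha|_{\tange_Z(\MLamU)}$ is nontrivial at every $Z$.

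There is essentially no obstacle here: the lemma is a direct evaluation once one recognizes that the diagonal $\s^1$ orbit provides a canonical, globally defined, nowhere vanishing tangent direction on which $\alpha$ is positive. The only thing to be careful about is to record that $V$ is indeed tangent to $\MLamU$, which follows solely from the fact that both the quadric equations \eqref{E2} and the sphere equation \eqref{E3} are expressed in terms of the moduli $|z_j|^2$.
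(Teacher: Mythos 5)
Your proof is correct, but it takes a genuinely different route from the paper. The paper argues dually: it identifies the full annihilator of $\tange_Z\left(\MLamU\right)$ inside the cotangent space as the $(2m+1)$-dimensional span of $dF_k$, $d\overline{F_k}$ and $d\rho$ (using transversality of the defining hypersurfaces), writes the condition ``$\alpha_Z$ vanishes on the tangent space'' as $\alpha_Z=\sum_k T_k\,dF_k+\sum_k\overline{T}_k\,d\overline{F_k}+\mu\,d\rho$, and compares coefficients to conclude that the only point where this can happen is the origin, which is not on $\MLamU$. You instead argue primally: you exhibit one explicit tangent vector, the generator $V_Z$ of the diagonal circle inside the torus action of Remark \ref{R1}, note that it is tangent because \eqref{E2} and \eqref{E3} depend only on the moduli $|z_j|^2$, and compute $\alpha(V_Z)=2\sum_j a_j|z_j|^2>0$ on the unit sphere. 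Your computation checks out ($dz_j(V_Z)=iz_j$, $d\bar z_j(V_Z)=-i\bar z_j$), and your argument is shorter, handles arbitrary positive weights $a_j$ uniformly rather than reducing to $a_j=1$, and gives slightly more: strict positivity of $\alpha$ on a globally defined nowhere-vanishing vector field, which fits naturally with the confoliation/contact viewpoint developed later. What the paper's longer route buys is the explicit description of the forms vanishing on $\tange_Z\left(\MLamU\right)$, which is exactly the machinery reused immediately afterwards in the proof of Proposition \ref{P1} to determine $\ker(d\alpha)$; your argument does not produce that by-product, but as a proof of Lemma \ref{L2} alone it is complete.
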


\noi {\bf Remark.} We will consider  $a_j=1$ for all $j\in \{1,\dots,n\}$ since the other cases are completely analogous.

\begin{proof}

\noi For $Z=\big(z_1,\ldots,z_n\big)\in\MLamU$, the linear function 

$$
\alpha_Z:\tange_Z\left(\MLamU\right)\to\R
$$ 
\noi is trivial if and only if there exist $T_1,\dots,T_m\in\C$ and $\mu\in\R$ such that: 

\begin{dmath}\label{E4}
 i\sum_{j=1}^n \big(z_jd\bar{z}_j-\bar{z}_jdz_j\big)
 =\sum_{k=1}^m\left[T_k \sum_{j=1}^n\left[\lambda^k_{_j}\big(\bar{z}_jdz_j+z_jd\bar{z}_j\big)\right]\right]
+\sum_{k=1}^m\left[\overline{T}_k\sum_{j=1}^n\left[\bar{\lambda}^k_{_j}\big(z_jd\bar{z}_j+\bar{z}_jdz_j\big)\right]\right]
+\mu\sum_{j=1}^n \big(z_jd\bar{z}_j+\bar{z}_jdz_j\big). 
\end{dmath}

\noi This is because the set of forms in the right-hand side of this equation is a $(2m+1)$-dimensional real vector space and any form of this type vanishes on $\tange_Z\left(\MLamU\right)$. The set of forms in  $\tange_Z\big(\C^n\big)$ that vanish on $\tange_Z\left(\MLamU\right)$ is also $(2m+1)$-dimensional.  Indeed: for all $k\in\{1,\dots,m\}$, the real differential forms
$$\omega^k_{_1}(Z)=d\big(\Re\left(F_k\right)\big)(Z);\;\;\omega^k_{_2}(Z)=d\big(\Im\left(F_k\right)\big)(Z),\;\;\omega^k_{_3}(Z)=d\rho(Z)
$$ 
are, by hypothesis, linearly independent since the hypersurfaces defined by 
$$
\Re\big(F_k\big)(Z)=0,\quad\Im\big(F_k\big)(Z)=0\quad\text{and}\quad\rho(Z)=1,\quad \forall\; k=1,\dots,m
$$
\noi intersect transversally.\\

\noi Let $v\in \tange_Z\left(\MLamU\right)$, we have that $\omega^k_{_1}(Z)=\omega^k_{_2}(Z)=\omega_{_3}(Z)=0$, for all $k\in\{1,\dots,m\}$,  then the set of real 1-forms vanishing identically on  $\tange_Z\left(\MLamU\right)$ is of dimension $2m+1$. Therefore any 1-form vanishing identically in $\tange_Z\left(\MLamU\right)$ is of the form
$$
\sum_{k=1}^m\Big[\left(a_k+ib_k\right)dF_k\Big]+\sum_{k=1}^m\Big[\left(a_k-ib_k\right)d\overline{F_k}\Big]+\mu d\rho.
$$ 

\noi Therefore the set of linear forms defined  by the right hand side is e\-xac\-tly the set of forms that vanish identically  on $\tange_Z\left(\MLamU \right)$. \\

\noi Comparing coefficients in equation \eqref{E4} we conclude that 
the origin $Z=\left(0,\dots,0\right)$ is the unique point which satisfies the last equations. But the origin is not in $\MLamU$. We conclude that $\alpha$ is nontrivial on $\MLamU$. 
\end{proof}

\noi We will denote by $\K_\alpha(Z)$ the kernel of $\alpha$ at the point $Z\in\MLamU$.

\begin{prop}\label{P1}
Let $n>3$, $m\geq1$ and $n>2m$.  The kernel of $d\alpha$ for all  $Z\in \tange_Z\left(\MLamU\right)$, is a $(2m+1)$-dimensional real subspace of  $\tange\left(\MLamU\right)$.
\end{prop}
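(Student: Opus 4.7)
The plan is to compute $d\alpha$ explicitly and exploit the fact that on the ambient $\C^n$ the $2$-form $\omega:=d\alpha=2i\sum_{j=1}^n dz_j\wedge d\bar z_j$ is, up to a real constant, the standard symplectic form, hence non-degenerate. For any submanifold $N\subset(\C^n,\omega)$ the kernel of $\omega|_{\tange_Z N}$ coincides with $\tange_Z N\cap V_Z$, where $V_Z\subset\tange_Z(\C^n)$ is the symplectic orthogonal of $\tange_Z N$; and $V_Z$ is spanned by the Hamiltonian vector fields $X_f$ of any local system of defining functions $f$ for $N$. So the task reduces to identifying those Hamiltonian vector fields and checking that they are tangent to $\MLamU$.

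For $\MLamU$ the $2m+1$ real defining functions are $\Re F_1,\Im F_1,\dots,\Re F_m,\Im F_m,\rho$, and their differentials are linearly independent at every $Z\in\MLamU$ (this is the transversality recalled in the excerpt, already used to show that $\MLamU$ is a manifold). Because $\omega$ is non-degenerate, the assignment $f\mapsto X_f$ is a pointwise $\R$-linear isomorphism, so $X_{\Re F_k},X_{\Im F_k},X_\rho$ are linearly independent at $Z$; by a dimension count they span the $(2m+1)$-dimensional subspace $V_Z$.

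The crucial step is to show that each of these Hamiltonian vector fields actually lies in $\tange_Z\MLamU$. Equivalently, one has to verify the Poisson-commutation relations $\{F_k,F_\ell\}=\{F_k,\bar F_\ell\}=\{F_k,\rho\}=0$. They hold because every defining function depends only on the moduli $|z_j|^2$: a direct computation gives that $X_{|z_j|^2}$ is, up to a real constant, the infinitesimal generator of the rotation $z_j\mapsto e^{it}z_j$, hence $X_{\Re F_k},X_{\Im F_k},X_\rho$ are real linear combinations of those rotation generators and integrate to the locally-free subtorus $\T^{2m+1}\subset(\s^1)^n$ of Remark~\ref{R1}, which preserves $\MLamU$.

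Once tangency is in hand one has $V_Z\subset\tange_Z\MLamU$, so $\ker(\omega|_{\tange_Z\MLamU})=V_Z$, which has real dimension $2m+1$ as asserted. The only substantive step is the tangency/Poisson-commutativity computation; everything else is a formal consequence of the non-degeneracy of $d\alpha$ on $\C^n$ together with the transversality that already makes $\MLamU$ a manifold.
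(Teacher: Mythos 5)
Your proof is correct, but it takes a genuinely different route from the paper. You observe that $d\alpha$ is (a positive multiple of) the standard symplectic form on $\C^n$ and that the defining functions $\Re F_k,\Im F_k,\rho$, being functions of the $|z_j|^2$ alone, pairwise Poisson-commute; hence $\MLamU$ is a coisotropic submanifold and the kernel of $d\alpha$ on $\tange_Z\left(\MLamU\right)$ is the characteristic subspace $\tange_Z\left(\MLamU\right)\cap\big[\tange_Z\left(\MLamU\right)\big]^{d\alpha}$, whose dimension is the codimension $2m+1$ once the Hamiltonian fields are known to be tangent. The paper instead argues by brute force: it writes the condition $\iota_v d\alpha=\sum_k T_k dF_k+\sum_k\overline{T}_k d\overline{F_k}+\mu d\rho$, compares coefficients to get the explicit family $v\big(T_1,\dots,T_m,\mu;Z\big)$, checks tangency directly, and then proves injectivity of the parametrization $\phi_Z$ (Lemma~\ref{L3}) using the regularity property of admissible configurations (Lemma~\ref{L1}). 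The two uses of admissibility are really the same fact in different clothing: the linear independence of $d\big(\Re F_k\big)(Z), d\big(\Im F_k\big)(Z), d\rho(Z)$ that you invoke is exactly equivalent to the injectivity of $\phi_Z$, and your Hamiltonian fields $X_{\Re F_k},X_{\Im F_k},X_\rho$ are, up to constants, the paper's vectors $v\big(T_1,\dots,T_m,\mu;Z\big)$. What your argument buys is brevity and conceptual clarity (the admissibility hypothesis enters only once, through the transversality already needed to make $\MLamU$ a manifold, and the statement generalizes to any coisotropic level set of torus-invariant functions); what the paper's computation buys is the explicit parametrization of the kernel, which it reuses immediately afterwards to compute $\K_\alpha(Z)\cap\ker(d\alpha_Z)$ and to identify the symplectic foliation as orbits of the $\R^{2m}$-action of Remark~\ref{R4}. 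One small imprecision: the one-parameter groups generated by your Hamiltonian fields lie in the $\big(\s^1\big)^n$-action but need not close up into, nor coincide with, the locally-free subtorus $\T^{2m+1}$ of Remark~\ref{R1}; the tangency argument only needs that these flows preserve every $|z_j|^2$, so this does not affect the proof.
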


\noi {\it Proof:} Points $v\in\C^{n}$ in the kernel of $d\alpha$ restricted to $\tange\left(\MLamU\right)$ satisfy:

\begin{equation}\label{E5}
\begin{cases}
dF_1(v)&=0,\\
\vdots\\
dF_m(v)&=0,\\
d\rho(v)&=0,\\
\iota_v d\alpha&=\sum_{k=1}^mT_kdF_k+\sum_{k=1}^m\overline{T}_kd\overline{F_k}+\mu d\rho,
\end{cases}
\end{equation}

\noi Let $Z=\big(z_1,\dots,z_n\big)\in\MLamU$ and $v=\big(v_1,\dots,v_n\big)\in\tange_Z\left(\MLamU\right)$. The system \eqref{E5} corresponds  to the following equations:

\begin{equation*}
 \sum_{j=1}^n \left[\lambda^1_{_j}\biggr(z_jd\bar{z}_j(v)+\bar{z}_jdz_j(v)\biggl)\right]=0,
\end{equation*}
\begin{equation*}
\vdots
\end{equation*}
\begin{equation*}
 \sum_{j=1}^n \left[\lambda^m_{_j}\biggr(z_jd\bar{z}_j(v)+\bar{z}_jdz_j(v)\biggl)\right]=0,
\end{equation*}

\begin{equation*}
\sum_{j=1}^n\biggr(\bar{z}_jdz_j(v)+z_jd\bar{z}_j(v)\biggl)=0,
\end{equation*}

\begin{dmath}\label{E6}
 i\sum_{j=1}^n\biggr(dz_j(v)d\bar{z}_j-d\bar{z}_j(v)dz_j\biggl)
=\sum_{k=1}^m\left[T_k \sum_{j=1}^n\left[\lambda^k_{_j}\big(\bar{z}_jdz_j+z_jd\bar{z}_j\big)\right]\right]
+\sum_{k=1}^m\left[\overline{T}_k\sum_{j=1}^n\left[\bar{\lambda}^k_{_j}\big(z_jd\bar{z}_j+\bar{z}_jdz_j\big)\right]\right]
+\mu\sum_{j=1}^n \left(z_jd\bar{z}_j+\bar{z}_jdz_j\right). 
\end{dmath}

\noi Comparing  coefficients in  equation \eqref{E6} we have:
$$
idz_j(v)=\left[2\Re\left(\sum_{k=1}^m T_k\lambda^k_{_j}\right)+\mu\right]z_j,\quad\quad j\in\{1,\dots,n\}.
$$

\noi  Then,  for $Z=\big(z_1,\dots,z_n\big)\in\MLamU$ vectors $v\in\tange_Z\left(\MLamU\right)$ satisfying $\iota_vd\alpha=0$ are of the form:

$$
v\big(T_1,\dots,T_m,\mu;Z\big)=
$$
$$-i\left(\left(2\Re\left(\sum_{k=1}^mT_k\lambda^k_{_1}\right)+\mu\right)z_1,\dots,\left(2\Re\left(\sum_{k=1}^mT_k\lambda^k_{_n}\right)+\mu\right)z_n\right),
$$

\noi where $T_1,\dots,T_m\in\C$  and $\mu\in\R$. \\

\noi The vector $v\big(T_1,\dots,T_m,\mu;Z\big)$ is automatically tangent to the sphere  $\s^{2n-2m+1}$ and satisfies the condition $dF_k(v)=0$  for all $k\in\{1,\dots,m\}$.

\begin{lema}\label{L3}
The $\R$-linear map $\phi_Z:\C^m\times\R\cong\R^{2m+1}\to\C^n\cong\R^{2n}$ defined by 
$$
\phi_Z\left(T_1,\dots,T_m,\mu;Z\right)= v(T_1,\dots,T_m,\mu;Z),
$$
\noi is injective.
\end{lema}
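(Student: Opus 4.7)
The plan is to reduce injectivity of $\phi_Z$ to two separate statements (first $\mu=0$, then $T=0$), each of which follows from the admissibility hypothesis on $\Lam$. Discarding the harmless $-i$ factor, $\phi_Z(T_1,\dots,T_m,\mu)=0$ is equivalent to the $n$ scalar equations
\[
\Big(2\Re\Big(\sum_{k=1}^m T_k\lambda^k_{_j}\Big)+\mu\Big)z_j=0,\qquad j\in\{1,\dots,n\}.
\]
Set $J:=\{j : z_j\neq 0\}$. Since $Z\in\MLamU$, the identities $\sum_{j}|z_j|^2\lam_j=0$ and $\sum_j|z_j|^2=1$ exhibit $0$ as a convex combination of the vectors $\lam_j$, $j\in J$, with strictly positive weights $|z_j|^2$; the weak hyperbolicity part of admissibility then forces $|J|\geq 2m+1$.

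Next I would introduce the $\R$-linear functional $g:\C^m\to\R$ defined by $g(\nu):=2\Re\big(\sum_{k=1}^m T_k\nu^k\big)$, so that the scalar equations above read $g(\lam_j)=-\mu$ for every $j\in J$. Averaging this identity against the probability weights $|z_j|^2$ and using $\sum_j|z_j|^2\lam_j=0$ gives $g(0)=-\mu$, which immediately yields $\mu=0$.

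To conclude I would deduce $T=0$ by contradiction. If $T\neq 0$, then $g$ is a nontrivial $\R$-linear form on $\C^m\cong\R^{2m}$, so $\ker g$ is a real hyperplane of dimension $2m-1$; by the previous paragraph (with $\mu=0$) every $\lam_j$ with $j\in J$ lies in $\ker g$, and in particular $0\in\ev\big((\lam_j)_{j\in J}\big)\subset\ker g$. Carath\'eodory's theorem applied inside the $(2m-1)$-dimensional space $\ker g$ produces a subset $J'\subset J$ with $|J'|\leq 2m$ such that $0\in\ev\big((\lam_j)_{j\in J'}\big)$; padding $J'$ to a $2m$-subset (possible since $n>2m$) only enlarges the convex hull, contradicting weak hyperbolicity. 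Hence $T=0$, proving injectivity.

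The main obstacle, as I expect it, is the dimensional asymmetry that $\mu$ is real while the $T_k$ are complex: any purely $\C$-linear argument based on Lemma \ref{L1} would leave exactly one real dimension unaccounted for. Splitting off $\mu=0$ first via the averaging step is what brings the remaining problem down to the real hyperplane $\ker g$ of codimension one in $\C^m$, where Carath\'eodory together with weak hyperbolicity closes the argument cleanly.
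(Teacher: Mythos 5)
Your proof is correct, but it follows a genuinely different route from the paper's. The paper disposes of Lemma~\ref{L3} in two lines: it invokes Meersseman's rank result (Lemma~\ref{L1}, \cite[Lemma I.1]{Meer}) to assert that the real matrix built from the vectors $\lam_j$ together with the row $(1,\dots,1)$ has maximal rank $2m+1$, and concludes at once that $\ker\phi_Z$ is trivial. You instead prove the needed nondegeneracy from scratch: you first restrict attention to $J=\{j: z_j\neq0\}$ (correctly noting that only these indices constrain $(T,\mu)$, since every component of $v$ carries a factor $z_j$), observe that the point $Z\in\MLamU$ itself exhibits $0$ as a convex combination of $(\lam_j)_{j\in J}$ with weights $|z_j|^2$, kill $\mu$ by averaging the relations $g(\lam_j)=-\mu$ against these weights, and then rule out $T\neq0$ by Carath\'eodory's theorem inside the real hyperplane $\ker g$ combined with weak hyperbolicity. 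The two arguments prove the same underlying fact --- that the $(\lam_j)_{j\in J}$ affinely span $\R^{2m}$ over the reals --- but yours is self-contained and, notably, more careful on a point where the paper is terse: Lemma~\ref{L1} as quoted concerns the \emph{complex} rank ($m+1$) of the matrix with columns $(\lam_j,1)$, whereas injectivity of the $\R$-linear map $\phi_Z$ requires the corresponding \emph{real} rank $2m+1$ (equivalently, that the $\lam_j$, $j\in J$, lie in no real affine hyperplane through $0$), and your splitting into the steps $\mu=0$ and $T=0$ is exactly what bridges that gap directly from weak hyperbolicity. What the paper's approach buys is brevity and a clean reduction to a known lemma; what yours buys is an elementary, verifiable argument that also explains why the real (not merely complex) nondegeneracy holds at every point of $\MLamU$.
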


\begin{proof}
Since $\Lam$ is admissible the $(2m+1)\times(2m+1)$-real matrix whose rows are $\left(\lambda^1_{_j},\dots,\lambda^m_{_j}\right)$ for $j\in\{1,\dots,n\}$ and $(1,\dots,1)$ is of maximal rank (see \cite[Lemma I.1]{Meer}). Therefore the kernel of $\phi_Z$ is trivial. 
\end{proof}
\noi Then, $\ker(d\alpha_Z)$ has real dimension $2m+1$ and we finish the proof of the proposition.\\

\noi Let $v\left(T_1,\dots,T_m,\mu;Z\right)$ be a vector in $\ker\big(d\alpha_Z\big)$.  We want to know  when this vector is in the kernel of $\alpha_Z$; i.e., when $v\big(T_1,\dots,T_m,\mu;Z\big)$ satisfies the following equation:
 \small{\begin{eqnarray*}  
\sum_{j=1}^n
\left(
\left(2\Re\left(\sum_{k=1}^mT_k\lambda^k_{_j}\right)+\mu\right)|z_j|^2+
\left(2\Re\left(\sum_{k=1}^mT_k\lambda^k_{_j}\right)+\mu\right)|z_j|^2
\right)=0.
\end{eqnarray*}
}
 
\noi Then
$$
 \mu+2\sum_{j=1}^n\Re\left(\sum_{k=1}^mT_k\lambda^k_{_j}\right)|z_j|^2=0. 
$$
 
\noi It follows that $\mu=0$.\\

\noi Thus 
$$\big[\K_\alpha(Z)\cap\ker d\alpha_Z\big]=\biggr\{v\big(T_1,\dots,T_m,0;Z\big)\;\Big|\;T_k\in\C\;\;\text{for all}\; \;k=1,\dots,m\biggl\},$$

\noi where  vectors $v\big(T_1,\dots,T_m,0;Z\big)$ are of the form:
\begin{equation}\label{E7}
-i\left(2\Re\left(\sum_{k=1}^mT_k\lambda^k_{_1}\right)z_1,\dots,2\Re\left(\sum_{k=1}^mT_k\lambda^k_{_n}\right)z_n\right).
\end{equation}

\noi Therefore: 
$$
\dim\Big[\K_\alpha(Z)\cap\ker(d\alpha_Z)\Big]=2m.$$ 

\begin{rmk}\label{RD}
\noi By a general version of  Darboux's  theorem (see \cite{St}, Chapter 3, Theorem 6.2),  for each $Z\in\MLamU$ there exist local coordinates
$$
\big(x_1,y_1,\dots,x_m,y_m,z,u_1,v_1,\dots,u_{n-2m-1},v_{n-2m-1}\big)\in\R^{(2n-2m-1)}, 
$$
in an open neighborhood $\mathcal U$ of $Z$ such that the $1$-form $\alpha$ can be written as
$$
\alpha=dz+u_1dv_1+\dots,u_{n-2m-1}dv_{n-2m-1}.
$$

\noi Furthermore $\Big[\K_\alpha(Z')\cap\ker(d\alpha_{Z'})\Big]$ where $Z'\in\U$, in this neighborhood is the distribution which is tangent to the foliation in $\mathcal U$ whose leaves are $z=\rm{constant}$, $u_i=\rm{constant}$, $v_i=\rm{constant}$ ($i=1,\dots, n-2m-1$).
\end{rmk}

\begin{rmk}\label{R4}
\noi The leaves of the foliation $\mathcal{F}_\alpha$ are the orbits of the locally free action of $\R^{2m}$ on $\MLamU$  given by 
$$
\psi_{_{\left(T_1,\dots,T_m\right)}}\big(z_1,\dots,z_n\big)=\left(e^{-2i\Re\left(\sum_{k=1}^mT_k\lambda^k_{_1}\right)}z_1,\dots,e^{-2i\Re\left(\sum_{k=1}^mT_k\lambda^k_{_n}\right)}z_n\right).
$$
\noi These leaves are tangent to vectors of the form \eqref{E7}.
\end{rmk}

\begin{rmk} \label{R3} It follows that the $2m$-dimensional distribution $\Big[\K_\alpha(Z)\cap\ker(d\alpha_Z)\Big]$ is tangent to a foliation $\mathcal{F}_{\alpha}$. One has that $d\alpha$ is a totally  degenerate closed $2$-form,  when restricted to the foliation  $\mathcal{F}_{\alpha}$. However, since the leaves of the foliation $\mathcal{F}_{\alpha}$ are given as the orbits of a locally free action of $\R^{2m}$ it follows immediately that the foliation $\mathcal{F}_{\alpha}$ is a symplectic foliation with leaves of dimension $2m$.
\end{rmk} 

\noi  From the previous remark it follows:

\begin{prop}\label{P2}
\noi Let $m\geq1$, $n>3$ such that $n>2m$ and $\Lam$ be an admissible configuration. Then the $(2n-2m-1)$-moment-angle manifold $\MLamU$ associated to $\Lam$ has a regular Poisson structure,  whose associated symplectic foliation $\mathcal{F}=\mathcal{F}_{(\Lam,m,n)}$ is of dimension $2m$.
\end{prop}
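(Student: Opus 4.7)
The plan is to combine Remarks \ref{R3} and \ref{R4} into an honest regular Poisson structure by writing down an explicit bivector field on $\MLamU$ whose symplectic leaves are exactly the orbits of the $\R^{2m}$-action $\psi$ from Remark \ref{R4}.

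First, I would introduce the infinitesimal generators $X_1,\dots,X_{2m}$ of $\psi$. Writing $T_k = s_k + i t_k$ and differentiating $\psi_{(T_1,\dots,T_m)}$ at the identity, each $X_j$ is a globally defined smooth vector field on $\MLamU$ of the form \eqref{E7}. Because $\R^{2m}$ is abelian the generators pairwise commute, i.e.\ $[X_j,X_\ell]=0$; because the action is locally free, they are pointwise linearly independent and together span the distribution $\bigl[\K_\alpha(Z)\cap\ker(d\alpha_Z)\bigr]$ of Remark \ref{R3} at every $Z\in\MLamU$. In particular this distribution is regular of constant rank $2m$, and the associated foliation $\mathcal{F}_\alpha$ has all leaves of dimension $2m$.

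Next, I would define the bivector field
$$
\pi \;:=\; \sum_{k=1}^{m} X_{2k-1}\wedge X_{2k},
$$
and verify three points. \emph{Smoothness} is clear from the smoothness of the generators. \emph{Non-degeneracy on leaves}: the image of $\pi^{\sharp}:T^{*}\MLamU\to T\MLamU$ is exactly the span of the $X_j$, which coincides with $T\mathcal{F}_\alpha$, so $\pi$ has constant rank $2m$ and its restriction to each leaf is non-degenerate. \emph{Jacobi identity}: in the Schouten--Nijenhuis bracket, $[\pi,\pi]$ expands into a sum whose every term contains some bracket $[X_i,X_j]$, all of which vanish by abelianness of the action. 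Equivalently, one may simply observe that the translation-invariant symplectic form $\omega_0=\sum_k dx_k\wedge dy_k$ on $\R^{2m}$ is pushed forward by the orbit map to a smooth leafwise symplectic form on $\MLamU$, and this is dual to the bivector $\pi$ above. The associated symplectic foliation is then $\mathcal{F}_\alpha$ by the rank computation, which is what the proposition asserts.

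The main conceptual work has already been carried out in Remark \ref{R3} and Remark \ref{R4}: the distribution is integrable, the foliation is regular of dimension $2m$, and its leaves are orbits of a locally free abelian action. The only remaining obstacle is the standard (and entirely formal) packaging of a commuting family of vector fields spanning a regular distribution into a Poisson bivector; the Jacobi identity is the one point that requires writing out, but it follows immediately from $[X_i,X_j]=0$.
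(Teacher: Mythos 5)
Your proposal is correct and follows essentially the same route as the paper, which deduces the proposition directly from Remarks \ref{R3} and \ref{R4}: the leaves of $\mathcal{F}_\alpha$ are orbits of the locally free abelian $\R^{2m}$-action, and this immediately furnishes the leafwise symplectic structure. Your explicit bivector $\pi=\sum_k X_{2k-1}\wedge X_{2k}$ with the Schouten-bracket check merely spells out the step the paper leaves implicit (citing Vaisman for the equivalence between regular symplectic foliations and regular Poisson structures), so there is no substantive difference.
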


\section{Moment-angle manifolds of mixed type.}\label{S3}

\noi We  define a generalization of the moment-angle manifolds, already defined,  which are obtained by adding the sum of squares of new complex variables and we call them \emph{moment-angle manifolds of mixed type}. \\

\noi  First we consider the case when $m=1$, $n>3$ and $s\geq1$.\\

\noi Let $X=\big(w_1,\dots,w_s,z_1,\dots,z_n\big)\in\C^{n+s}$.  Let $\Lam=\big(\lambda_1,\dots,\lambda_n\big)$ be an admi\-ssi\-ble configuration,  with $\lambda_j\in\C$ for all $j\in\{1,\dots,n\}$. 

 \begin{defi}\label{D2}
 The manifold $\Mg$ obtained as the intersection of the hypersurfaces defined by the quadratic equations:
\begin{eqnarray}\label{E8}
F_s(X)&:=&\sum_{r=1}^s w^2_r+\sum_{j=1}^n\lambda_j|z_j|^2=0,
\\
\label{E9}
\rho_s(X)&:=&\sum_{r=1}^s|w_r|^2+\sum_{j=1}^n|z_j|^2=1,
\end{eqnarray}
is called \emph{moment-angle manifold of mixed type}.
\end{defi}

\begin{defi}
We denote by $\Mgo$ the manifold consisting of  all points 

\noi $\big(w_1,\dots,w_s,z_1,\dots,z_n\big)$ in $\C^{n+s}-\big\{(0,\dots,0)\big\}$ satisfying equation \eqref{E8}.
\end{defi}

\noi $\Mgo$ is indeed a manifold since the only singularity of the variety determined by \eqref{E8} is the origin.\\

\noi  In \cite{GL} it was shown that $\Mg$ is indeed a manifold. This is true since $\Mgo$ is transverse to the unit sphere.\\

\noi If one deforms continuously an admissible configuration $\Lam_0$ through a family  $\{\Lam_t\}_{t\in(0,1)}$, in such a way that $\Lam_t$  is admissible for all $t\in(0,1)$, then the corres\-ponding manifolds are isotopic submanifolds of the unit sphere. \\

\noi Therefore, the manifolds are isotopic to manifolds which correspond to configurations given by the vertices of regular polygons with an odd number $2\ell+1$ of vertices with a positive weight attached at each vertex, since every admissible configuration can be continuously deformed to such a configuration (see \cite{Meer} and \cite{BM}).\\

\noi  In other words, the manifolds are determined, up to diffeomorphism,  by cyclic sequences of integers $n_1,\dots, n_{2\ell+1}$, with $n_1+n_2+\cdots+n_{2\ell+1}=n$. Hence, for a fixed $n>3$ the number $N(n)$ of manifolds $\Mg$, up to diffeomorphisms, is finite. Furthermore $
N(n)\to\infty,  \,\,\,\,\text{as}\,\,\,t\to \infty$.\\

\noi The  manifolds $\Mg$ are completely understood as we see in the following theorem:

\begin{theo}\cite{GL}\label{T1}
Let $m=1$, $n>3$ and $s\geq1$. Let $\Lam=\big(\lambda_1\dots,\lambda_n\big)$ be an admi\-ssible  configuration, with $\lambda_j\in\C$. The moment-angle manifold of mixed type $\Mg$ has dimension $2n+2s-3$ and it is diffeomorphic to:
$$\underset{j=1}{\overset{2\ell+1}{\sharp}}\left(\s^{2d_j+s-1}\times\;\s^{2n-2d_j+s-2}\right),\quad
 \text{\it where}\quad d_j=n_j+\dots+n_{j+\ell-1}.
$$
\end{theo}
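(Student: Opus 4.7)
The plan is to reduce to canonical configurations and then run an inductive surgery argument mimicking the $s=0$ case of \cite{LV}. The dimension count is immediate: equations \eqref{E8}--\eqref{E9} cut out three independent real conditions in $\C^{n+s}$, independence of the differentials $d(\Re F_s)$, $d(\Im F_s)$, $d\rho_s$ following from admissibility of $\Lam$ at points where all $w_r=0$ (exactly as in \cite{Meer}) and from the contribution of the $w$-variables elsewhere, giving $\dim \Mg = 2n+2s-3$. Then, invoking the deformation principle stated just before the theorem, a continuous path of admissible configurations gives a smooth isotopy of the associated manifolds; since any admissible $\Lam$ is path-connected through admissible configurations to a canonical one with vectors at the vertices of a regular $(2\ell+1)$-gon with multiplicities $n_1,\dots,n_{2\ell+1}$ (see \cite{Meer, BM}), it suffices to treat these canonical models.

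For the geometric analysis I would use the ``moment-type'' map $\mu : \Mg \to \R^n_{\geq 0}$, $\mu(w,z) = (|z_1|^2,\dots,|z_n|^2)$. The image is the region in the standard simplex $\{\sum t_j \leq 1\}$ satisfying $|\sum_j \lambda_j t_j| \leq 1-\sum_j t_j$ (obtained by applying the Cauchy--Schwarz inequality to the $w$-constraints). Over an interior point with all $t_j>0$, the fiber splits as a product of the torus $(\s^1)^n$ of $z$-arguments and the Stiefel-like variety in $\R^{2s}$ prescribed by the values of $|U|^2$, $|V|^2$ and $U\cdot V$, where $w_r = u_r+iv_r$ and $U=(u_r)$, $V=(v_r)$. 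This fiber analysis, together with the degenerations occurring on codimension-1 faces of the image, is what produces the extra $s$ dimensions carried by each sphere factor in the statement.

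With this fibration in hand I would run the induction on $\ell$ as in \cite{LV}: the base case $\ell=1$ yields a single product $\s^{2d_1+s-1}\times\s^{2n-2d_1+s-2}$ via an explicit diffeomorphism read off from the fibration, and the inductive step amalgamates two adjacent blocks of the cyclic sequence and interprets the resulting change of manifold as the surgery realizing one connect-sum factor. The main obstacle will be rigorously tracking how the $(U,V)$-fiber degenerates along the boundary of the image of $\mu$ and verifying that each amalgamation produces exactly the factor $\s^{2d_j+s-1}\times\s^{2n-2d_j+s-2}$ rather than some other product of spheres; once this bookkeeping is correct, the connected sum pattern follows formally from the L\'opez de Medrano surgery argument, with the dimensional shifts by $s-1$ and $s-2$ reflecting the contribution of the $w$-variables.
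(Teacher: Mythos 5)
You should first note that the paper does not prove this statement at all: Theorem \ref{T1} is imported verbatim from \cite{GL}, and the only indication of the mechanism is Remark \ref{R5}, namely that $\Mg$ is a double covering of the sphere $\s^{2(n+s)-3}$ branched along $\MgU$. The route this suggests (and which is the one used in the cited work) is an induction on $s$: start from the classical case $s=0$, where the connected-sum description is the theorem of L\'opez de Medrano--Verjovsky \cite{LV}, and at each step use the effect of a double branched covering along the previously identified submanifold to raise each sphere dimension by one, producing the shifts $2d_j\mapsto 2d_j+s-1$ and $2n-2d_j-1\mapsto 2n-2d_j+s-2$. Your plan instead redoes the whole analysis from scratch via a moment-type map and an induction on $\ell$; that is a genuinely different route, but it is also the one that contains the real difficulty.

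Two concrete problems. First, your description of the image of $\mu$ is wrong in the case $s=1$ (which the theorem includes): a single complex $w$ with $w^2=-\sum_j\lambda_j t_j$ has $|w|^2$ forced equal to $\bigl|\sum_j\lambda_j t_j\bigr|$, so the image is the locus where $\bigl|\sum_j\lambda_j t_j\bigr|=1-\sum_j t_j$, not the region cut out by the inequality; the Cauchy--Schwarz argument only gives the inequality description for $s\geq2$, and the fiber over interior points is then a Stiefel-type manifold only in that range, so the fibration picture must be set up separately for $s=1$. Second, and more seriously, the assertion that ``the connected sum pattern follows formally from the L\'opez de Medrano surgery argument'' once the bookkeeping is done is exactly the content of the theorem, not a formality: even for $s=0$ the identification of the manifold with a connected sum of sphere products in \cite{LV} is a long and delicate argument (decomposition of the manifold over the polytope, identification of the pieces, and classification-type input), not a straightforward induction on $\ell$ with an ``amalgamation'' step, and nothing in your sketch shows how the $(U,V)$-fiber degenerations along the boundary of the image reproduce that argument with the stated dimension shifts. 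As written, the proposal reduces correctly to canonical configurations and gets the dimension count, but assumes the hard step; the efficient fix is to use the branched-covering structure of Remark \ref{R5} to induct on $s$ from the known $s=0$ case rather than reproving it.
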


\begin{rmk}\label{R5}
\noi When $m=1$ and $s\geq1$ is arbitrary, V. G\'omez  showed in \cite{GL} that the moment-angle manifold of mixed type $\Mg$ is a two branched covering of the sphere $\s^{2(n+s)-3}$ branched along $\MgU$. This permits to describe  $\Mg$ as an iterated sequence of double branched coverings.
\end{rmk}

\noi Consider, for every set of $n+s$ positive numbers $a_1,\dots,a_s,b_1,\dots,b_n$,  the real $1$-form  on $\Mg$ :

$$
\alpha:= i\left[\sum_{r=1}^s\Big[a_r\big(w_rd\bar{w}_r-\bar{w}_rdw_r\big)\Big]+\sum_{j=1}^n\Big[b_j\big(z_jd\bar{z}_j-\bar{z}_jdz_j\big)\Big]\right].
 $$

 \begin{lema}\cite[Lemma 1]{BLV}\label{L4}
 Let $m=1$, $n>3$ and $s\geq1$. Let $\Lam$ be an admissible configuration in $\C$. Then, for all $X=\big(w_1,\dots,w_s,z_1,\dots,z_n\big)\in\Mg$ the $1$-form $\alpha$ restricted to the tangent space $\tange_X\left(\Mg\right)$ is a nontrivial form.
\end{lema}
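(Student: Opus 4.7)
The plan is to follow the strategy of Lemma \ref{L2}, using the fact that $\Mg$ has real codimension $3$ in $\C^{n+s}$ (cut out by the complex equation $F_s=0$ and the real equation $\rho_s=1$) and that the two defining equations meet transversally at every point of $\Mg$, since $\Mg$ is a manifold as shown in \cite{GL}. Consequently the annihilator of $\tange_X\big(\Mg\big)$ inside $\tange_X^*\big(\C^{n+s}\big)$ is the real $3$-dimensional space spanned by $\Re\big(dF_s\big)|_X$, $\Im\big(dF_s\big)|_X$ and $d\rho_s|_X$, or equivalently it consists of all $1$-forms
\begin{equation*}
T\,dF_s|_X + \overline{T}\,d\overline{F_s}|_X + \mu\,d\rho_s|_X,\qquad T\in\C,\ \mu\in\R.
\end{equation*}
Thus $\alpha|_X$ vanishes on $\tange_X\big(\Mg\big)$ if and only if such a representation exists, and the argument reduces to deriving a contradiction from this.

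The next step is to expand both sides of $\alpha|_X = T\,dF_s + \bar T\,d\overline{F_s} + \mu\,d\rho_s$ in the basis $\big\{dw_r,d\bar w_r,dz_j,d\bar z_j\big\}$ and compare coefficients at $X$. The $dz_j$ equation reads $-ib_j\bar z_j = \big(2\Re\big(T\lambda_j\big)+\mu\big)\bar z_j$, equivalently $\bar z_j\bigl(2\Re\big(T\lambda_j\big)+\mu+ib_j\bigr)=0$. Since $b_j>0$, the second factor has nonzero imaginary part, forcing $z_j=0$ for every $j$. The defining equations of $\Mg$ then collapse to $\sum_r w_r^2=0$ and $\sum_r|w_r|^2=1$, and in particular at least one $w_r$ must be nonzero.

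The $dw_r$ equation yields $2Tw_r + \big(\mu+ia_r\big)\bar w_r = 0$ for every $r$. If $T=0$, then $\mu+ia_r\neq 0$ (as $a_r>0$) forces each $w_r=0$, contradicting $\sum_r|w_r|^2=1$. If $T\neq 0$ and $w_r\neq 0$, we rewrite the equation as $\big(\mu+ia_r\big)e^{-2i\arg w_r} = -2T$. Taking moduli yields $\mu^2+a_r^2=4|T|^2$, so the weight $a_r$ is the same positive constant $a$ for every index with $w_r\neq 0$, and for each such index $\arg w_r$ is determined modulo $\pi$. Hence every nonzero $w_r$ has the form $t_r e^{i\theta}$ with $t_r\in\R$ and a common phase $\theta$, so
\begin{equation*}
\sum_r w_r^2 \;=\; e^{2i\theta}\sum_r t_r^2,
\end{equation*}
which vanishes only if every $t_r=0$, contradicting $\sum_r|w_r|^2=1$.

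The main obstacle, compared with the purely Hermitian case of Lemma \ref{L2}, is the new squared-variable block: the quadratic $w_r^2$ in $F_s$ makes the $dw_r$-coefficient equation couple $w_r$ with $\bar w_r$ rather than producing a scalar multiple of $\bar w_r$, so the linear-algebraic injectivity argument of Lemma \ref{L3} does not transfer directly. The key idea is the phase-rigidity observation $\big(\mu+ia_r\big)e^{-2i\arg w_r}=-2T$, which together with $\sum_r w_r^2=0$ forces all nonzero $w_r$'s to lie on a common real line in $\C$ and thus yields the final contradiction with $\sum_r|w_r|^2=1$.
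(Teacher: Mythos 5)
Your proof is correct, and it actually supplies a step that this paper never writes down: here Lemma \ref{L4} is only quoted from \cite{BLV}, and the in-paper proofs of the analogous statements (Lemma \ref{L2} for the Hermitian case and Lemma \ref{L5} for $m>1$) stop short of the difficulty you address. Your opening is the same as theirs --- identify the annihilator of $\tange_X\left(\Mg\right)$ with the span of $\Re(dF_s)$, $\Im(dF_s)$, $d\rho_s$ (legitimate, by the transversality that makes $\Mg$ a manifold) and compare coefficients --- and your $dz_j$-equation, forcing $z_j=0$ because the purely imaginary $-ib_j$ cannot equal the real number $2\Re(T\lambda_j)+\mu$, is exactly the mechanism of equation \eqref{E19} in the proof of Lemma \ref{L5}. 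Where you genuinely diverge is after $z_j=0$: in the $m>1$ case each defining equation contains a single square $w_k^2$, so $z=0$ immediately gives $w_k^2=0$, hence $w_k=0$ and the ``only the origin survives'' contradiction; for $m=1$ and $s\ge 2$ one only gets $\sum_r w_r^2=0$ together with $\sum_r|w_r|^2=1$, which by itself has many solutions, so that shortcut is unavailable. Your phase-rigidity argument from the $dw_r$-coefficients --- $2Tw_r+(\mu+ia_r)\bar w_r=0$, hence $|\mu+ia_r|=2|T|$ for every index with $w_r\ne0$, all such $w_r$ sharing a phase modulo $\pi$, so $\sum_r w_r^2=e^{2i\theta}\sum_r t_r^2\ne 0$, contradicting $F_s(X)=0$ --- is precisely the missing ingredient, and it is sound; the degenerate case $T=0$ is also correctly eliminated. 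What your route buys is a self-contained proof valid for all $s\ge1$ (not just the $s=1$ situation where the squared block collapses as in Lemma \ref{L5}), at the price of a slightly longer coefficient analysis than the paper's ``unique solution is the origin'' template.
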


\noi We will denote by $\K_\alpha(X)$ the kernel of $\alpha$ at the point $X\in\Mg$.\\

\noi The restriction of $d\alpha$ to each tangent space $\tange_X\left(\Mg\right)$ is skew-symmetric and since the real dimension of $\Mg$ is odd, $\ker(d\alpha)$ should be of dimension at least one.

\begin{prop}\label{P3}
Let $m=1$, $n>3$, $s\geq1$. Then:
\begin{enumerate}
\item If $X=(w_1,\dots,w_s, z_1\dots,z_n)\in\Mg$ is such that $w_1^2+\dots+w_s^2\neq0$,  then the kernel of $d\alpha$ at the point $X\in\tange_X\left(\Mg\right)$ is a real $1$-dimensional subspace of $\tange\left(\Mg\right)$.  

\item If $X$ is a point of $\Mg$ such that $w_1^2+\dots+w_s^2=0$,  then the kernel of $d\alpha$ at the point $X\in\tange_X\left(\Mg\right)$ is a real $3$-dimensional subspace of $\tange\left(\Mg\right)$.
\end{enumerate}

\end{prop}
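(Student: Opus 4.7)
The plan is to follow the method used in Proposition \ref{P1}: a vector $v\in\tange_X(\Mg)$ lies in $\ker(d\alpha_X)$ if and only if $\iota_v\,d\alpha$ annihilates $\tange_X(\Mg)$, equivalently
\[
\iota_v\,d\alpha \;=\; T\,dF_s + \overline{T}\,d\overline{F_s} + \mu\,d\rho_s
\]
for some $T\in\C$, $\mu\in\R$, together with the tangency conditions $dF_s(v)=0$ and $d\rho_s(v)=0$.

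First I would compute $d\alpha = 2i\bigl[\sum_r a_r\,dw_r\wedge d\bar w_r + \sum_j b_j\,dz_j\wedge d\bar z_j\bigr]$ and match the coefficients of the basis $1$-forms $dw_r,d\bar w_r,dz_j,d\bar z_j$ in the displayed equation, exactly as in the proof of Proposition \ref{P1}. This uniquely produces
\[
\eta_r \;=\; -\tfrac{i}{2a_r}\bigl(2\overline T\,\bar w_r + \mu\,w_r\bigr),\qquad v_j \;=\; -\tfrac{i}{2b_j}\bigl(2\Re(T\lambda_j) + \mu\bigr)z_j.
\]
An analogue of Lemma \ref{L3}, again based on the admissibility of $\Lam$, would show that the $\R$-linear map $(T,\mu)\mapsto v(T,\mu;X)$ from $\C\times\R\simeq\R^{3}$ to $\C^{n+s}$ is injective, so that dimension counts in parameter space transfer directly to $\tange_X(\Mg)$.

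Second, I would impose $v \in \tange_X(\Mg)$. Because $v_j$ is an imaginary real multiple of $z_j$, one has $\bar z_j v_j + z_j\bar v_j = 0$, so the $z$-contributions to both $dF_s(v)$ and $d\rho_s(v)$ cancel exactly as in Proposition \ref{P1} and only the $w$-variables contribute. Setting $A:=\sum_r|w_r|^2/a_r$ and $B:=\sum_r w_r^2/a_r$, a short computation reduces the two tangency conditions to
\[
2\overline T\,A + \mu\,B \;=\; 0, \qquad \Im(T B) \;=\; 0,
\]
i.e.\ three $\R$-linear conditions on $(\Re T,\Im T,\mu)\in\R^{3}$.

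Finally I would analyze the rank of this $3\times 3$ system. A direct determinant expansion shows it vanishes identically, consistent with the fact that $d\alpha$ restricted to the odd-dimensional $\Mg$ must always be degenerate. When $\sum_{r}w_{r}^{2}\ne 0$, the matrix has rank $2$: the complex equation forces $\overline T$ to be a real multiple of $B$ and $\mu$ to be proportional to $A$, while the real equation $\Im(TB)=0$ is then automatic, leaving a $1$-parameter family of solutions, so $\dim\ker(d\alpha_X)=1$; the generator can be identified (up to scale) with the infinitesimal generator of a natural $\s^1$-action on $\Mg$. When $\sum_r w_r^2=0$, all entries of the coefficient matrix involving $B$ vanish, both equations degenerate enough to leave $(\Re T,\Im T,\mu)$ unconstrained, and the kernel becomes $3$-dimensional. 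The main obstacle is the rank analysis of this singular $3\times 3$ system: one has to pinpoint the structural linear dependence that makes the determinant identically zero, and then identify the extra dependence appearing on the locus $\sum_r w_r^2=0$ that collapses the rank all the way down.
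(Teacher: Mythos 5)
Your reduction is exactly the paper's: parametrize the candidate kernel vectors by $(T,\mu)\in\C\times\R$ through coefficient matching in $\iota_v d\alpha=T\,dF_s+\overline{T}\,d\overline{F_s}+\mu\,d\rho_s$, observe that the $z$-contributions to the tangency conditions cancel, and reduce to the system $2\overline{T}A+\mu B=0$, $\Im(TB)=0$ with $A=\sum_r|w_r|^2/a_r$, $B=\sum_r w_r^2/a_r$. Your analysis of case 1 ($B\neq0$) agrees with the paper and is correct (and there the lower bound $\dim\ker\geq1$ is automatic from parity, so injectivity is not even needed).

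The gap is in case 2. When $B=0$ it is not true that ``all entries of the coefficient matrix involving $B$ vanish'' leaves $(\Re T,\Im T,\mu)$ unconstrained: the complex equation still reads $2\overline{T}A=0$, and for $s\geq2$ the condition $\sum_r w_r^2=0$ does not force $A=0$ (e.g.\ $w_1,w_2$ proportional to $1$ and $i$). In your $3\times3$ matrix the two rows carrying $2A$ survive, the rank is still $2$, your own system gives $T=0$, and the kernel is the $1$-dimensional space spanned by $v(0,\mu;X)=-i\mu\big(w_1/2a_1,\dots,w_s/2a_s,z_1/2b_1,\dots,z_n/2b_n\big)$; one checks moreover that $\alpha\big(v(0,\mu;X)\big)=-\mu\neq0$, so $\alpha$ is in fact contact at such points. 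The rank collapses to $0$ (and the kernel becomes $3$-dimensional, via the Lemma~\ref{L3}-type injectivity, which is available there because at least three $z_j$ are nonzero) precisely when every $w_r=0$; only for $s=1$ does $w_1^2=0$ coincide with $w_1=0$. So your argument establishes case 2 only on the sublocus $\{w_1=\dots=w_s=0\}$, not on all of $\{\sum_r w_r^2=0\}$. To be fair, the paper's own proof makes the same silent jump: it writes $2\overline{T}\sum_r|w_r|^2+\mu\sum_r w_r^2=0$ and, when $\sum_r w_r^2=0$, concludes only that $\mu$ is arbitrary without noticing that $T$ is killed unless all $w_r$ vanish. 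You have therefore reproduced the paper's route faithfully, but since you displayed the linear system explicitly, the inconsistency is visible in your own equations: you should either restrict case 2 to points with all $w_r=0$, or record that for $s\geq2$ the statement (and the subsequent description of the degeneracy locus) should refer to $\{w_1=\cdots=w_s=0\}$ rather than $\{\sum_r w_r^2=0\}$. A small additional caveat: your appeal to an analogue of Lemma~\ref{L3} needs at least three nonvanishing $z_j$, which weak hyperbolicity guarantees at points with $w=0$ but not at arbitrary points of $\Mg$; as noted, in case 1 you can dispense with it by the parity argument.
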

\begin{proof}
A point $v\in\C^{n+s}$ which belongs to $\ker(d\alpha)$ restricted to $\tange\left(\Mg\right)$,  satisfies the following equations:
\begin{equation}\label{E13}
\begin{cases}
dF_s(v)&=0,\\
d\rho_s(v)&=0,\\
\iota_v d\alpha&=TdF_s+\overline{T}d\overline{F_s}+\mu d\rho_s.
\end{cases}
\end{equation}

\noi Let $X=\left(w_1,\dots,w_s,z_1,\dots,z_n\right)\in\Mg$ and $v=\left(u_1,\dots,u_s,v_1,\dots,v_n\right)\in\tange_X\left(\Mg\right)$. The system  \eqref{E13} corresponds to:

\begin{equation*}
\sum_{r=1}^s 2w_rdw_r(v)+\sum_{j=1}^n \Big[\lambda_j\big(z_jd\bar{z}_j(v)+\bar{z}_jdz_j(v)\big)\Big]=0,
\end{equation*}

\begin{equation*}
\sum_{r=1}^s \Big(\bar{w}_rdw_r(v)+w_rd\bar{w}_r(v)\Big)+\sum_{j=1}^n\Big(\bar{z}_jdz_j(v)+z_jd\bar{z}_j(v)\Big)=0,
\end{equation*}

\begin{dmath}\label{E14}
\nonumber i\left[\sum_{r=1}^s \Big(dw_r(v)d\bar{w}_r-d\bar{w}_r(v)dw_r\Big)+\sum_{j=1}^n\Big(dz_j(v)d\bar{z}_j-d\bar{z}_j(v)dz_j\Big)\right]\\
\nonumber =T\left[\sum_{r=1}^s2w_rdw_r+\sum_{j=1}^n\Big[\lambda_j\big(\bar{z}_jdz_j+z_jd\bar{z}_j\big)\Big]\right] \\
+\overline{T}\left[\sum_{r=1}^s2\bar{w}_rd\bar{w}_r+\sum_{j=1}^n\Big[\bar{\lambda}_j\big(z_jd\bar{z}_j-\bar{z}_jdz_j\big)\Big]\right] \\
\nonumber +\mu \left[\sum_{r=1}^s \Big(w_rd\bar{w}_r+\bar{w}_rdw_r\Big)+\sum_{j=1}^n\Big(z_jd\bar{z}_j+\bar{z}_jdz_j\Big)\right].\end{dmath}

\noi Comparing coefficients in  equation \eqref{E14} we have:
$$
idw_r(v)=2\overline{T}\bar{w}_r+\mu w_r, \quad \quad r\in\{1,\dots,s\};
$$
$$
idz_j(v)=\Big(2\Re(T\lambda_j)+\mu\Big)z_j,\quad\quad j\in\{1,\dots,n\}.
$$

\noi  Then, vectors $v\in\tange_X\left(\Mg\right)$ where $\iota_vd\alpha=0$ i.e, vectors in the kernel of $d\alpha$ at 
$X=\big(w_1,\dots,w_s, z_1,\dots,z_n\big)$ are of the form:

$$
v\big(T,\mu;X\big)=
$$
$$
-i\Big(2\overline{T}\bar{w}_1+\mu w_1,\dots,2\overline{T}\bar{w}_s+\mu w_s,\big(2\Re(T\lambda_1)+\mu\big)z_1,\dots,\big(2\Re(T\lambda_n\big)+\mu)z_n\Big),
$$
\noi where $T\in\C$ and $\mu\in\R$.\\

\noi The vector $v\big(T,\mu;X\big)$ has to be tangent to the sphere  $\s^{2n+2s-1}$, and therefore it must satisfy this condition:
$$
\Re\left(-i\sum_{r=1}^s\Big(2\overline{T}\bar{w}^2_r+\mu|w_r|^2\Big)-i\sum_{j=1}^n\Big(2\Re(T\lambda_j)+\mu\Big)|z_j|^2\right)=0.
$$
\noi In other words:
$$
\Re\left(-i\sum_{r=1}^s{\overline{T}}\bar{w}^2_r\right)=0.
$$

\noi  If $w_1^2+\dots+w_s^2\neq0$ , $T$ depends only on a real parameter because it must be of the form 
$$T=t\sum_{r=1}^s\bar{w}^2_r,\;\;\; \text{for some} \,\,   t\in\R.$$

\noi If $w_1^2+\dots+w_s^2=0$, $T$ can be any complex number.\\

\noi The condition $dF_s(v)=0$ and $v\big(T,\mu;X\big)\in\s^{2n+2s-1}$ implies:

$$
2\overline{T}\sum_{r=1}^s|w_r|^2+\mu\sum_{r=1}^sw^2_r=0.
$$
If $w_1^2+\dots+w_s^2\neq0$ we have that $\mu$ depends on $t$ as follows:
$$
\mu=-2t\sum_{r=1}^s|w_r|^2.
$$
\noi If $w_1^2+\dots+w_s^2=0$ then $\mu$ can be an arbitrary real number.\\

\noi Therefore  at a point where  $w_1^2+\dots+w_s^2\neq0$,  vectors in the kernel of $d\alpha_X$ depend only on a real parameter $t$ so the kernel of $d\alpha_X$ has dimension one.\\

\noi At a point where $w_1^2+\dots+w_s^2=0$ ,  vectors in the kernel of $d\alpha_X$ depend on the complex number $T$ and a real number $\mu$ so the kernel of $d\alpha_X$ has dimension three, in other words the kernel of $d\alpha_X$  consists of all vectors of the form:
$$
v\big(T,\mu;X\big)=
$$
$$
-i\Big(2\overline{T}\bar{w}_1+\mu w_1,\dots,2\overline{T}\bar{w}_s+\mu w_s,\big(2\Re(T\lambda_1)+\mu\big)z_1,\dots,\big(2\Re(T\lambda_n)+\mu\big)z_n\Big)
$$

\end{proof}

\noi Let $W_s$ be the set of points $\big(w_1,\dots,w_s,z_1,\dots,z_n\big)$ of $\Mg$ such that $w_1^2+\dots+w_s^2=0$. Then
$W_s$ is a real analytic variety of $\Mg$ of real codimension 2s. The variety $W_s$ is singular if $s>1$ and when $s=1$ is a moment-angle manifold of dimension $2n-3$.\\

%
\noi Let $v\big(T,\mu;X\big)$ be a vector in $\ker(d\alpha_X)$. We want to know when $v\big(T,\mu;X\big)$ is in $\K_\alpha(X)$; in other words, when $v\big(T,\mu;X\big)$  satisfies the following equation:
$$
i\left[\sum_{r=1}^s\Big(w_rd\bar{w}_r(v)-\bar{w}_rdw_r(v)\Big)+\sum_{j=1}^n\Big(z_jd\bar{z}_j(v)-\bar{z}_jdz_j(v)\Big)\right]=0.
$$
\noi Then
$$
2\sum_{r=1}^s\Re\left(Tw^2_r\right)+2\sum_{j=1}^n\Re\left(T\lambda_j\right)|z_j|^2+\mu=0,
$$
\noi it follows that $\mu=0$.\\

\noi  Hence, if $w^2_1+\dots+w^2_s\not= 0$ we have $\mu=-2t\sum_{r=1}^m|w_r|^2=0$, we have  that $t$ must be equal to zero.\\

\noi Therefore in the set of points $X=\big(w_1,\dots,w_s,z_1,\dots,z_n\big)\in\Mg$ such that $w^2_1+\dots+w^2_s\not=0$,  the dimension of the subspace $\Big[\K_\alpha(X)\cap\ker(d\alpha_X)\Big]$ is zero. It follows that  the $1$-form $\alpha$ is a contact form in $\Mg-W_s$. In other words,
$$
Rank\Big(d\alpha_X|_{\K_\alpha(X)}\Big)=2(n+s-2),
$$
when $w^2_1+\dots+w^2_s\not=0$.\\


\noi On the other hand,  in the set of points $X=\big(0,\dots,0, z_1,\dots,z_n\big)\in\Mg$ and assuming that $\mu=0$, vectors of the form
$$
v\big(T,0;X\big)=-i\Big(0,\dots,0,\Re\left(T\lambda_1\right)z_1,\dots,2\Re\left(T\lambda_n\right)z_n\Big), \quad \quad T\in\C,
$$
are in $\Big[\K_\alpha(X)\cap\ker(d\alpha_X)\Big]$.\\

\noi Hence $\Big[\K_\alpha(X)\cap\ker(d\alpha_X)\Big]$ is a two dimensional space parametrized by $T\in\C$.

\begin{defi}
We will denote this $2$-dimensional vector space at the point $X\in{W_s}$ by $\Pi_s(X)$.
\end{defi}

\noi Let now $m> 1$, $n>3$ such that $n>2m$. \\

\noi Let $\Lam=\big(\lam_1,\dots,\lam_n\big)$  be an $n$-tuple of vectors in  $\C^m$, with $\lam_j=\big(\lambda^1_{_j},\dots,\lambda^m_{_j}\big)$ for all  $j\in\{1,\dots,n\}$ and let $w_k\in\C$ for all $k\in\{1,\dots,m\}$. \\

\noi Let consider the following  system of quadratic equations:
\begin{equation}\label{E15}
\begin{cases}
\pmb{F_1}(X):=w^2_1+\sum_{j=1}^n\lambda^1_{_j}|z_j|^2=0,\\
\vdots\\
\pmb{F_m}(X):=w^2_m+\sum_{j=1}^n\lambda^m_{_j}|z_j|^2=0.
\end{cases}
\end{equation}

\noi We use the notation $F_k(Z)=\sum_{j=1}^n\lambda^k_{_j}|z_j|^2$ and $G_k(X)=w^2_k+\sum_{j=1}^n\lambda^k_{_j}|z_j|^2$.

\begin{prop}\label{P4}
The set of zeros of system \eqref{E15} is a regular manifold outside of the origin if and only if, for all collection $K\subset\{1,\dots,m\}$ the set of zeros of the system
\begin{equation}\label{E16}
F_k(Z)=0,\quad\quad k\in K,
\end{equation}
is a regular manifold outside the origin.
\end{prop}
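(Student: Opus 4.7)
The plan is to analyze the real Jacobian of the system \eqref{E15} at a point $X = (w_1,\dots,w_m,z_1,\dots,z_n)\neq 0$ in its zero set, and reduce the full-rank condition to a corresponding condition on a subsystem of the $F_k$'s. The key structural observation is that $dG_k = 2w_k\,dw_k + dF_k$: the $w$-part involves only the single variable $w_k$, with no coupling between distinct indices, while the $z$-part is exactly $dF_k$. Consequently, if we partition the indices by $K_0 := \{k : w_k = 0\}$, then in the real Jacobian of $(G_1,\dots,G_m)$ the rows indexed by $k \in K_0$ have vanishing $w$-columns, while each row $k \notin K_0$ carries an invertible $2\times 2$ real block of determinant $4|w_k|^2$ over the real/imaginary pair of $w_k$ and zeros in every other $w$-column (including the $w_k$-columns for $k \in K_0$).

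From this I would extract the reduction lemma: the real Jacobian of $(G_1,\dots,G_m)$ has full rank $2m$ at $X$ if and only if the real Jacobian of $(F_k)_{k\in K_0}$ has full rank $2|K_0|$ at $Z = (z_1,\dots,z_n)$. After permuting rows and columns, the Jacobian takes the block form $\left(\begin{smallmatrix} A & 0 & B \\ 0 & 0 & C \end{smallmatrix}\right)$ where $A$ is the block-diagonal invertible matrix built from the $w_k$-blocks for $k\notin K_0$, $C$ is the $z$-column submatrix for the rows $k\in K_0$, and the middle zero block reflects that the $w_k$-columns for $k\in K_0$ are identically zero. Hence the total rank equals $2(m-|K_0|) + \mathrm{rank}(C)$, and full rank $2m$ is equivalent to $\mathrm{rank}(C) = 2|K_0|$; but $C$ is literally the real Jacobian of $\{F_k=0: k\in K_0\}$ at $Z$.

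The $(\Leftarrow)$ direction then proceeds as follows: given $X\neq 0$ in the zero set of \eqref{E15}, set $K = K_0$; the identity $G_k(X)=0$ for $k\in K$ forces $F_k(Z)=0$ for $k\in K$, and if $Z=0$ then $F_k(Z)=0$ for every $k$, making $w_k^2 = -F_k(Z) = 0$ for $k\notin K$ as well, hence $X=0$, a contradiction. Thus $Z\neq 0$, and the hypothesis applied to $K$, combined with the reduction lemma, yields regularity of \eqref{E15} at $X$. For the $(\Rightarrow)$ direction, fix $K\subset\{1,\dots,m\}$ and $Z\neq 0$ with $F_k(Z)=0$ for $k\in K$; enlarge $K$ to $K'' := \{k : F_k(Z)=0\} \supset K$, and build $X$ by setting $w_k = 0$ for $k\in K''$ and picking $w_k$ with $w_k^2 = -F_k(Z)\neq 0$ for $k\notin K''$. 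Then $X\neq 0$ lies in \eqref{E15} with $K_0 = K''$, so the hypothesis plus the reduction lemma gives that $(F_k)_{k\in K''}$ has full rank at $Z$; restricting to the rows indexed by the subset $K\subset K''$ preserves full row rank, so $\{F_k=0 : k\in K\}$ is regular at $Z$.

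The main obstacle I anticipate is the careful bookkeeping for the block-matrix rank argument in the reduction lemma, in particular verifying the two decoupling facts used above — that the $w_k$-columns for $k\in K_0$ vanish entirely, and that the invertible $w$-blocks for $k\notin K_0$ do not interact with the $z$-column entries of the rows in $K_0$. Once this is written cleanly, both implications reduce to the set-inclusion manipulations described above together with the elementary observation that a matrix with linearly independent rows retains linearly independent rows on any subset.
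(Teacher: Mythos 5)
Your proposal is correct and follows essentially the same route as the paper's proof: both reduce regularity to the rank of the real Jacobian of $(G_1,\dots,G_m)$ and exploit its block-triangular structure determined by the set of indices $k$ with $w_k=0$, so that full rank for the system \eqref{E15} at $X$ is equivalent to full rank of the sub-Jacobian of the corresponding $F_k$'s at $Z$. Your write-up is in fact a bit more careful than the paper's (the explicit reduction lemma with the rank count $2(m-|K_0|)+\mathrm{rank}(C)$, and the enlargement of $K$ to $K''=\{k : F_k(Z)=0\}$ so that the constructed point has $w_k\neq 0$ exactly off $K''$, a point the paper glosses over), but the underlying argument is the same.
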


\begin{proof}
$\Rightarrow)$ Let suppose that the set of zeros of \eqref{E15} is non a regular manifold outside the origin. Let $(W,Z)\not=(0,0)$ a singular solution of \eqref{E15}, where $W=\big(w_1,\dots,w_m\big)$ and $Z=\big(z_1,\dots,z_n\big)$.\\

\noi Let $K$ be the set of $k\in\{1,\dots,m\}$ such that $w_k=0$. We can assume that $K=\{r+1,\dots,m\}$, with $1\leq r\leq m$. Then the jacobian matrix of system \eqref{E15} is the following:
$$
\left[\begin{array}{cc}\mathcal{W}_{2r\times2m}& \mathcal{A}_{2r\times2n} \\\mathbf{0}_{2(m-r)\times2m} &\mathcal{B}_{2(m-r)\times2n} \end{array}\right],
$$
\noi where $\mathbf{0}_{2(m-r)\times2m}$ is the ${2(m-r)\times2m}$-zero matrix and $\mathcal{W}_{2r\times2m}$ is a ${2r\times2m}$-diagonal matrix of rank $2r$, where the elements in the diagonal are $2\times2$-matrices. $\mathcal{A}_{2r\times2n}$ is the ${2r\times2n}$-jacobian matrix of the system  given by equations $F_k(Z)$ for $k\in\{1,\dots,r\}$ and $\mathcal{B}_{2(m-r)\times2n} $ is the ${2(m-r)\times2n}$-jacobian matrix of the system \eqref{E16} with $k\in K=\{r+1,\dots,m\}$, then $Rank(\mathcal{B})<2(m-r)$ in the point $Z=\left(z_1,\dots,z_n\right)$.\\

\noi  Observe that if $Z=(0,\dots,0)$ then $w_k$ will be $0$ for all $k\in\{1,\dots,m\}$.\\

\noi Then the set of zeros of the system \eqref{E16} is non a regular manifold outside the origin.\\

\noi $\Leftarrow)$ Suppose that $\mathcal{B}_{2(m-r)\times2n} $ is singular in the point $Z\not=0$ which is a solution of the system \eqref{E16} and let $w_k=0$ for $k\in K=\{r+1,\dots,m\}$. Observe that $Z$ is a solution of the equations $F_k(Z)=0$ for $k\in\{1,\dots,r\}$. We can find $w_k$ for $k\in\{1,\dots,r\}$ such that $G_k\big((W,Z)\big)=0$ for $k\in\{1,\dots,r\}$.\\

\noi  Then, the point $(W,Z)$ is a solution of \eqref{E15} and the rank of the jacobian matrix of \eqref{E16} is less than $2m$.
\end{proof}

\begin{prop}\label{P5}
Let $m\geq1$, $n>3$ and $n>2m$. Let $\Lam=\big(\lam_1,\dots,\lam_n\big)$ be an $n$-tuple of vectors in $\C^m$ and let $w_k\in\C$ for all $k\in\{1,\dots,m\}$. The intersection of the hypersurfaces defined by the following system of equations:

\begin{equation*}
\pmb{F_1}(X):=w^2_1+\sum_{j=1}^n\lambda^1_{_j}|z_j|^2=0,
\end{equation*}
\begin{equation*}
\vdots
\end{equation*}
\begin{equation*}
\pmb{F_m}(X):=w^2_m+\sum_{j=1}^n\lambda^m_{_j}|z_j|^2=0,
\end{equation*}
\begin{equation*}
\pmb{\rho}(X):=\sum_{k=1}^m|w_k|^2+\sum_{j=1}^n|z_j|^2=1,
\end{equation*}

\noi defines a manifold $\MG$ of real dimension $2n-1$ if 
\begin{enumerate}
\item $\Lam$ is an admissible configuration in $\C^m$,
\item for all collection of $k\in\{1,\dots,m\}$ such that $w_k=0$, for instance $K=\{r_1,\dots,r_\ell\}$ with $1\leq r_{\ell}\leq m$ and $\ell\in\{1,\dots,m\}$, we have that the $n$-tuple $\Lam'=\big(\lam'_1,\dots,\lam'_n\big)$  of vectors $\lam'_j=\big(\lambda^{r_1}_{_j},\dots,\lambda^{r_\ell}_{_j}\big)\in\C^{m-\ell}$ is an admissible configuration.
\end{enumerate}
\end{prop}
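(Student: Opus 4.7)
The strategy is to handle separately the $m$ complex (equivalently, $2m$ real) quadratic constraints $\pmb{F_k}(X)=0$ and the sphere constraint $\pmb{\rho}(X)=1$. First I would show that the common zero set $\MGo$ of $\pmb{F_1},\dots,\pmb{F_m}$ is a smooth submanifold of $\C^{n+m}\setminus\{0\}$ of real codimension $2m$, and then I would verify that the unit sphere meets $\MGo$ transversally.

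For the first step, Proposition \ref{P4} reduces the question exactly to verifying that, for every subset $K\subset\{1,\dots,m\}$, the reduced quadric system $\{F_k(Z)=0:k\in K\}$ is regular outside the origin in $\C^n$. Hypothesis (2) of the present proposition asserts that for every such $K$ the truncated configuration $\Lam'_K$ with vectors $\lam'_j=(\lambda^k_j)_{k\in K}\in\C^{|K|}$ is admissible. Fix a nonzero $Z$ in the reduced zero set and let $J=\{j:z_j\neq 0\}$. Since $\sum_{j\in J}\lam'_j|z_j|^2=0$ is a strictly positive convex combination, $0\in\ev((\lam'_j)_{j\in J})$; applying Lemma \ref{L1} to the admissible $\Lam'_K$, the matrix with columns $((\lam'_j,1))_{j\in J}$ has complex rank $|K|+1$. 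This forces the vectors $\{\lam'_j\}_{j\in J}$ to span $\C^{|K|}$, which is the standard condition for the real Jacobian of $\{\Re F_k,\Im F_k\}_{k\in K}$ to attain its maximal rank $2|K|$ at $Z$. Hence each reduced system is regular off the origin, and by Proposition \ref{P4} so is $\MGo$, with real codimension $2m$.

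For the second step, I would exploit the fact that each $\pmb{F_k}$ is homogeneous of real degree two. Euler's identity then yields $X\cdot\nabla\pmb{F_k}(X)=2\pmb{F_k}(X)=0$ for every $X\in\MGo$, so the radial vector field is everywhere tangent to $\MGo$. But the radial direction spans the normal line to the unit sphere $\mathbb{S}^{2n+2m-1}$, hence $\tange_X\MGo\not\subset\tange_X\mathbb{S}^{2n+2m-1}$ at every point of $\MGo\cap\mathbb{S}^{2n+2m-1}$. This is precisely transversality, so $\MG$ is a smooth manifold of real codimension $2m+1$ in $\C^{n+m}$, i.e.\ of dimension $2(n+m)-(2m+1)=2n-1$.

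The main obstacle is not analytic but organizational: one must track carefully that hypothesis (2) supplies admissibility of $\Lam'_K$ at the correct dimension, namely in $\C^{|K|}$ rather than in $\C^m$, so that Lemma \ref{L1} is applied to $\Lam'_K$ and not to $\Lam$ itself. Hypothesis (1) is then recovered as the boundary case $K=\{1,\dots,m\}$ of (2), which corresponds to the stratum where all $w_k$ vanish; keeping this case visible alongside the intermediate strata ensures that no singular point is overlooked in applying Proposition \ref{P4}.
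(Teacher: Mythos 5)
Your overall plan coincides with the paper's: the paper proves this proposition in one sentence, invoking Proposition \ref{P4} for regularity of $\MGo$ away from the origin and degree-two homogeneity for transversality to the unit sphere, and your Euler-identity argument together with the count $2(n+m)-(2m+1)=2n-1$ is exactly what that sentence compresses. Your use of hypothesis (2) (with hypothesis (1) covering the extreme case $K=\{1,\dots,m\}$) to feed admissibility of the truncated configurations into Proposition \ref{P4} is also the intended reading of the statement.

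There is, however, one link in your verification of regularity of the reduced systems that does not hold as written. The image of the real differential of $Z\mapsto\big(F_k(Z)\big)_{k\in K}$ at $Z$ is the \emph{real} span of $\{\lam'_j\}_{j\in J}$ inside $\C^{|K|}\cong\R^{2|K|}$, because $dF_k(v)=\sum_j\lambda^k_j\,2\Re(\bar z_j v_j)$ has real coefficients $2\Re(\bar z_j v_j)$; surjectivity therefore requires the real span to be all of $\R^{2|K|}$. From the complex rank $|K|+1$ furnished by Lemma \ref{L1} you only obtain that the $\lam'_j$, $j\in J$, span $\C^{|K|}$ over $\C$, and complex spanning does not imply real spanning (the standard basis vectors, which are real, span $\C^{|K|}$ over $\C$ while their real span has real dimension $|K|$), so the implication you label ``the standard condition'' is precisely the missing step. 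The gap closes easily with the hypotheses you already have: if the real span $V$ of $\{\lam'_j\}_{j\in J}$ were a proper subspace, then $0\in\ev\big((\lam'_j)_{j\in J}\big)\subset V$ with $\dim_\R V\leq 2|K|-1$, and Carath\'eodory's theorem would put $0$ in the convex hull of at most $2|K|$ of the vectors $\lam'_j$, contradicting weak hyperbolicity of the truncated configuration. This real maximal-rank statement is what the regularity discussion in Section \ref{S2} of the paper actually relies on, so you should either argue as above or cite that fact directly instead of routing through the complex-rank formulation of Lemma \ref{L1}.
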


\begin{proof}
 \noi The proof follows from proposition \ref{P4} since the manifolds determined by equation \eqref{E15} are transverse to the unit sphere since they are homogeneous of degree 2.
 \end{proof}

\begin{obs}\label{O1}
$\MG$ is not empty since $\Lam$ is an admissible configuration in $\C^m$ and therefore $\MG$ contains the submanifold of points with $w_k=0$ for all $k\in\{1,\dots,m\}$.
\end{obs}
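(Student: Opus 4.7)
The plan is to exhibit an explicit nonempty submanifold of $\MG$ by specializing all the $w_k$ to zero. First I would substitute $w_1=\cdots=w_m=0$ into the defining equations \eqref{E15} and the sphere condition $\pmb{\rho}(X)=1$. Under this substitution each equation $\pmb{F_k}(X)=w_k^2+F_k(Z)=0$ collapses to $F_k(Z)=\sum_{j=1}^n\lambda^k_{_j}|z_j|^2=0$ for $k\in\{1,\dots,m\}$, while the sphere equation becomes $\sum_{j=1}^n|z_j|^2=1$. This is precisely the defining system \eqref{E2}--\eqref{E3} of the classical moment-angle manifold $\MLamU$.

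Second, since $\Lam$ is admissible by hypothesis, the results recalled in Section \ref{S2} (due to Meersseman \cite{Meer}) guarantee that $\MLamU$ is a nonempty smooth manifold of real dimension $2n-2m-1$. The map $Z\mapsto(0,\dots,0,Z)\in\C^{m+n}$ therefore produces a nonempty embedded copy of $\MLamU$ inside $\MG$, which immediately shows $\MG\neq\emptyset$.

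To justify that this locus is indeed a \emph{submanifold} of $\MG$, I would invoke Proposition \ref{P5} with $K=\{1,\dots,m\}$: this choice forces $\Lam'=\Lam$, so hypothesis (1) of that proposition automatically implies hypothesis (2) for this particular $K$. The transversality/rank argument underlying Proposition \ref{P5} then shows that $\{w_1=\cdots=w_m=0\}\cap\MG$ is regularly cut out inside $\MG$, of codimension $2m$ and hence real dimension $2n-2m-1$, matching the dimension of $\MLamU$. There is no substantive obstacle here: the statement is essentially a direct specialization of the construction, and all its content has already been packaged into the admissibility hypothesis and the nonemptiness of $\MLamU$ established in Section \ref{S2}.
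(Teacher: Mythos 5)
Your proposal is correct and follows essentially the same reasoning the paper intends: setting $w_1=\dots=w_m=0$ reduces the defining system to that of the classical moment-angle manifold $\MLamU$, which is nonempty precisely because $\Lam$ is admissible (the Siegel condition $0\in\ev(\Lam)$ supplies solutions), so $\MG$ contains this copy of $\MLamU$ and is nonempty. The additional appeal to Proposition \ref{P5} to see that this locus is a smooth submanifold of the expected dimension is consistent with the paper and adds nothing contradictory.
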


\begin{defi}\label{D5}
We call the manifold $\MG$ \emph{moment-angle manifold of mixed type} correspon\-ding to the admissible configuration $\Lam=\big(\lam_1,\dots,\lam_n\big)$, with  $\lam_j\in\C^m$.
\end{defi}

\begin{defi}\label{D6}
We denote by $\MGo$ the manifold obtained by system \eqref{E15} in $\C^{n+m}-\big\{(0,\dots,0)\big\}$.
\end{defi}

\noi $\MGo$ is indeed a manifold since the only singularity of the variety given by \eqref{E15} is the origin. If $\big(w_1,\dots,w_m,z_,\dots,z_n\big)$ satisfies \eqref{E15} then for all real number $t$ the point $\big(tw_1,\dots,tw_m,tz_1,\dots,tz_n\big)$ also satisfies \eqref{E15}, therefore if we add the origin we obtain a real cone with vertex the origin.\\

\noi Consider, for every set of $n+m$ positive numbers $a_1,\dots,a_m$, and $b_1,\dots,b_n$,  the real $1$-form  on $\MG$ :

$$
\alpha:= i\left[\sum_{r=1}^m\Big[a_r\big(w_rd\bar{w}_r-\bar{w}_rdw_r\big)\Big]+\sum_{j=1}^n\Big[b_j\big(z_jd\bar{z}_j-\bar{z}_jdz_j\big)\Big]\right].
 $$
 
\begin{lema}\label{L5}
For all $X=\big(w_1,\dots,w_m,z_1,\dots,z_n\big)\in\MG$ the $1$-form $\alpha$ restricted to the tangent space $\tange_X\left(\MG\right)$ is a nontrivial form.
\end{lema}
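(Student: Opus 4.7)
The plan is to mimic the proof of Lemma \ref{L2}, keeping the same three-step structure: identify the conormal space of $\MG$ as a $(2m+1)$-dimensional span of differentials of the defining functions, translate the triviality of $\alpha|_{T_X(\MG)}$ into a linear equation parametrized by $(T_1,\dots,T_m,\mu)\in\C^m\times\R$, and show that the only possible solution forces $X$ to be the origin. As in the remark before Lemma \ref{L2} I take $a_r=b_j=1$, the general case being analogous. By Proposition \ref{P5} the hypersurfaces $\Re(\pmb{F}_k)=0$, $\Im(\pmb{F}_k)=0$ for $k=1,\dots,m$, together with $\pmb{\rho}=1$, intersect transversally at every $X\in\MG$; hence the real vector space of $1$-forms on $T_X(\C^{n+m})$ vanishing on $T_X(\MG)$ has dimension exactly $2m+1$ and is spanned, equivalently, by $d\pmb{F}_k$, $d\overline{\pmb{F}_k}$ and $d\pmb{\rho}$. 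Thus $\alpha|_{T_X(\MG)}$ is trivial if and only if
$$\alpha=\sum_{k=1}^m T_k\, d\pmb{F}_k+\sum_{k=1}^m \overline{T}_k\, d\overline{\pmb{F}_k}+\mu\, d\pmb{\rho}$$
for some $T_1,\dots,T_m\in\C$ and $\mu\in\R$.

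From here I would expand using $d\pmb{F}_k=2w_k\,dw_k+\sum_j\lambda^k_{_j}(\bar z_j\,dz_j+z_j\,d\bar z_j)$, its conjugate, and the analogous expression for $d\pmb{\rho}$, then compare coefficients of $dw_r$, $d\bar w_r$, $dz_j$, $d\bar z_j$ on both sides. The decisive coefficient is that of $dz_j$: it reads
$$\bar z_j\Big[2\Re\Big(\sum_{k=1}^m T_k\lambda^k_{_j}\Big)+\mu\Big]=-i\,\bar z_j.$$
The bracketed quantity is real while the right-hand side is purely imaginary, so the identity can hold only when $z_j=0$. Since this must be satisfied for every $j\in\{1,\dots,n\}$, one concludes $z_j=0$ for all $j$.

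Once all $z_j$ vanish, the defining equations $\pmb{F}_k(X)=0$ collapse to $w_k^2=0$, forcing $w_k=0$ for every $k$, so $X=(0,\dots,0)$. But this contradicts $\pmb{\rho}(X)=1$, and therefore no admissible triple $(T_1,\dots,T_m,\mu)$ exists; hence $\alpha|_{T_X(\MG)}$ is nontrivial. The only nonroutine ingredient in this plan is the dimension count for the conormal space, and it is precisely here that Proposition \ref{P5} (transversality of the $2m+1$ real defining equations at every point of $\MG$) enters; once this input is granted, the coefficient match is mechanical and exactly parallel to the proofs of Lemmas \ref{L2} and \ref{L4}, so I do not expect any further obstacle.
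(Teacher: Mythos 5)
Your proposal is correct and follows essentially the same route as the paper's own proof: identify the $(2m+1)$-dimensional conormal space spanned by $d\pmb{F_k}$, $d\overline{\pmb{F_k}}$, $d\pmb{\rho}$, compare coefficients to force $z_j=0$ for all $j$ (real versus purely imaginary), and then use the defining equations $w_k^2=0$ together with $\pmb{\rho}=1$ to reach a contradiction. The only cosmetic difference is that you cite Proposition \ref{P5} for the transversality giving the dimension count, whereas the paper invokes it directly as a consequence of admissibility; the substance is identical.
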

 
 \begin{proof}
 For $X=\big(w_1,\dots,w_m,z_1,\dots,z_n\big)\in\MG$, the linear function 
 $$
 \alpha_X:\tange_X\left(\MG\right)\to\R
 $$
 is trivial if and only if there exist $T_k\in\C$, $k\in\{1,\dots,m\}$ and $\mu\in\R$ such that:
 
 \begin{dmath}\label{E17}
\nonumber i\left[\sum_{r=1}^m\left(w_rd\bar{w}_r-\bar{w}_rdw_r\right)+\sum_{j=1}^n \left(z_jd\bar{z}_j-\bar{z}_jdz_j\right)\right]\\
\nonumber =\sum_{k=1}^m\left[T_k\left(2w_kdw_k+ \sum_{j=1}^n\left[\lambda^k_{_j}\left(\bar{z}_jdz_j+z_jd\bar{z}_j\right)\right]\right)\right]\\
+\sum_{k=1}^m\left[\overline{T}_k\left(2\bar{w}_kd\bar{w}_k+\sum_{j=1}^n\left[\bar{\lambda}^k_{_j}\left(z_jd\bar{z}_j+\bar{z}_jdz_j\right)\right]\right)\right]\\
\nonumber+\mu\left[\sum_{r=1}^m\left(w_rd\bar{w}_r+\bar{w}_rdw_r\right)+\sum_{j=1}^n \left(z_jd\bar{z}_j+\bar{z}_jdz_j\right)\right]. 
\end{dmath}

 \noi This is because the set of forms in the right-hand side of this equation is a $(2m+1)$-dimensional real vector space and any form of this type vani\-shes on $\tange_X\left(\MG\right)$. The set of forms in  $\tange_X\left(\C^{n+m}\right)$ that vanish on $\tange_X\left(\MG\right)$ is also $(2m+1)$-dimensional.  Indeed: for all $k\in\{1,\dots,m\}$, the real differential forms
$$\omega^k_{_1}(X)=d\big(\Re(G_k)\big)(X);\;\;\omega^k_{_2}(X)=d\big(\Im(G_k)\big)(X),\;\;\omega^k_{_3}(X)=d\pmb{\rho}(X)
$$ 
are, by hypothesis, linearly independent since the hypersurfaces defined by 
$$
\Re\big(G_k\big)(X)=0,\quad\Im\big(G_k\big)(X)=0\quad\text{and}\quad \pmb{\rho}(X)=1,\quad \forall\; k=1,\dots,m
$$
\noi intersect transversally.\\

\noi Let $v\in \tange_X\left(\MG\right)$, we have that $\omega^k_{_1}(X)=\omega^k_{_2}(X)=\omega_{_3}(X)=0$, for all $k\in\{1,\dots,m\}$,  then the set of real 1-forms vanishing identically on  $\tange_X\left(\MG\right)$ is of dimension $2m+1$. Therefore any 1-form vanishing identically in $\tange_X\left(\MG\right)$ is of the form
$$
\sum_{k=1}^m\Big[\left(a_k+ib_k\right)dG_k\Big]+\sum_{k=1}^m\Big[\left(a_k-ib_k\right)d\overline{G_k}\Big]+\mu d\pmb{\rho}.
$$ 

\noi Therefore the set of linear forms defined  by the right hand side is e\-xac\-tly the set of forms that vanish identically  on $\tange_X\left(\MG \right)$. \\

\noi Comparing coefficients in equation \eqref{E17} we have:
\begin{equation}\label{E18}
iw_k=2\overline{T}_k\bar{w}_k+\mu w_k,\quad\quad k\in\{1,\dots,m\},
\end{equation}

\begin{equation}\label{E19}
iz_j=\left(2\Re\left(\sum_{k=1}^mT_k\lambda^k_{_j}\right)+\mu\right)z_j,\quad\quad j\in\{1,\dots,n\}.
\end{equation}

\noi From equation \eqref{E19} we conclude that $z_j=0$ for all $j\in\{1,\dots,n\}$. Then we have points of the form $X=\big(w_1,\dots,w_m,0,\dots,0\big)\in\C^{n+m}$ would be solutions of equations \eqref{E18} and \eqref{E19}, but in this case system \eqref{E15} and equation $\pmb{\rho}$ are of the form:
$$
\pmb{F_1}(X)=w^2_1=0,
$$
$$
\vdots
$$
$$
\pmb{F_m}(X)=w^2_m=0,
$$
$$
\pmb{\rho}(X)=\sum_{k=1}^m|w_k|^2=1.
$$

\noi It follows that $w_k=0$ for all $k\in\{1,\dots,m\}$. Then the unique point satisfying equations \eqref{E18} and \eqref{E19} is the origin $(0,\dots,0)\in\C^{n+m}$,  but the origin is not in $\MG$.\\

\noi We conclude that $\alpha$ is non trivial on $\MG$.
 \end{proof}
 
 \noi We will denote by $\K_\alpha(X)$ the kernel of $\alpha$ at the point $X\in\MG$.
 
 \begin{prop}\label{P6}
 Let $m\geq 1$, $n>3$ and $n>2m$. Then:
 \begin{enumerate}
\item If $X=\big(w_1,\dots,w_m,z_1,\dots,z_n\big)\in \MG$ is such that $w_r\not=0$ for all $r\in\{1,\dots,m\}$, the kernel of $d\alpha$ at the point $X\in\tange_X\left(\MG\right)$ is a real $1$-dimensional subspace of $\tange\left(\MG\right)$.

\item If $X\in\MG$ is such that $\ell$ coordinates $w_r$ are equal to $0$ for $\ell,r\in\{1,\dots,m\}$, the kernel of $d\alpha$ at the point $X\in\tange_X\left(\MG\right)$ is a real $(2\ell+1)$-dimensional subspace of $\tange\left(\MG\right)$.
\end{enumerate}
 \end{prop}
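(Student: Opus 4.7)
The plan is to mirror the proofs of Propositions \ref{P1} and \ref{P3}. I will parametrize vectors in $\tange_X(\MG)\cap\ker d\alpha_X$ by coefficients $(T_1,\dots,T_m,\mu) \in \C^m \times \R$, impose the tangency conditions to $\MG$, count the remaining free real parameters in each case, and establish injectivity of the resulting parametrization by invoking the admissibility of a restricted configuration $\Lam'$ provided by Proposition \ref{P5}.

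Setting up the parametrization: the proof of Lemma \ref{L5} shows that the annihilator of $\tange_X(\MG)$ in $\tange_X^*\C^{n+m}$ is the $(2m+1)$-dimensional span of $\{dG_k, d\overline{G_k}, d\pmb{\rho}\}$, so the condition $v \in \tange_X(\MG)\cap\ker d\alpha_X$ is equivalent to $\iota_v d\alpha = \sum_k(T_k dG_k + \overline{T}_k d\overline{G_k}) + \mu d\pmb{\rho}$ for some scalars $(T_k,\mu)$. Matching coefficients of $dw_r, d\bar w_r, dz_j, d\bar z_j$ as in equation \eqref{E6} yields $dw_k(v) = -i(2\overline{T}_k \bar w_k + \mu w_k)$ and $dz_j(v) = -i a_j z_j$ with $a_j = 2\Re\bigl(\sum_k T_k \lambda^k_j\bigr) + \mu$. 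Imposing $dG_k(v) = 0$ and $d\pmb{\rho}(v) = 0$ produces $2\overline{T}_k|w_k|^2 + \mu w_k^2 = 0$ and $\sum_r \Im(T_r w_r^2) = 0$; for each $k$ with $w_k \neq 0$ the first equation determines $T_k = -\mu \bar w_k^2/(2|w_k|^2)$ and makes $T_k w_k^2\in\R$, while for $w_k = 0$ it is vacuous and $T_k$ is free in $\C$. Consequently $dw_k(v) = 0$ for every $k$, the sphere constraint is automatic, and the count of free real parameters is $1$ in Case (1) (just $\mu$) and $2\ell + 1$ in Case (2) ($T_k$ for $k \in K := \{k : w_k = 0\}$, plus $\mu$).

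It remains to verify injectivity of the parametrization $(T_K,\mu) \mapsto v$, which is the delicate step. Since $dw_k(v) = 0$ always, the vanishing of $v$ reduces to $a_j = 0$ for all $j \in J := \{j : z_j \neq 0\}$. Substituting the formula for $T_r$ with $r \notin K$ gives $a_j = 2\Re\bigl(\sum_{k \in K}T_k \lambda^k_j\bigr) + \mu\gamma_j$ with $\gamma_j := 1 - \sum_{r \notin K} \Re(\bar w_r^2 \lambda^r_j)/|w_r|^2$. The key identity $\sum_j \gamma_j |z_j|^2 = 1$, derived from the defining equations $\sum_j \lambda^r_j|z_j|^2 = -w_r^2$ and $\pmb{\rho}(X) = 1$, then implies $\sum_j a_j |z_j|^2 = \mu$, so $a_j = 0$ on $J$ forces $\mu = 0$. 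With $\mu = 0$, the remaining condition $2\Re\bigl(\sum_{k \in K}T_k \lambda^k_j\bigr) = 0$ for $j \in J$ combined with the admissibility in $\C^\ell$ of $\Lam' = (\lam'_j)_j$, $\lam'_j = (\lambda^k_j)_{k\in K}$ (Proposition \ref{P5}) and the fact that $0 \in \ev(\lam'_j : j \in J)$ allows the analog of Lemma \ref{L3} applied to $\Lam'$ to force $T_K = 0$, concluding injectivity and the claimed dimensions.

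The hard part is exactly this injectivity argument in Case (2): recognizing the identity $\sum_j \gamma_j |z_j|^2 = 1$ as the right tool to isolate $\mu$, after which the rest becomes Lemma \ref{L3} for the smaller admissible configuration $\Lam'$ whose admissibility is guaranteed by Proposition \ref{P5}.
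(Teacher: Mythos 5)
Your proposal is correct and follows essentially the same route as the paper: parametrize kernel vectors by $(T_1,\dots,T_m,\mu)$, use $d\pmb{F_k}(v)=0$ to solve $T_k=-\tfrac{1}{2}\mu\,\bar w_k/w_k$ whenever $w_k\neq0$ (making the sphere condition automatic), and count the remaining free parameters, with injectivity handled via the Lemma~\ref{L3}-type rank argument for the restricted admissible configuration $\Lam'$. In fact you supply more detail than the paper at the one point it only sketches ("as in the proof of Lemma~\ref{L3}"): your identity $\sum_j\gamma_j|z_j|^2=1$, which isolates $\mu$ before reducing to $\Lam'$, is a correct and welcome explicit filling-in of that step.
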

 
 \begin{proof}
 \noi A point $v\in\C^{n+m}$ which belongs to $\ker(d\alpha)$ restricted to $\tange\left(\MG\right)$, satisfies the following equations:
 
 \begin{equation}\label{E20}
\begin{cases}
d\pmb{F_1}(v)&=0,\\
\vdots&\\
d\pmb{F_m}(v)&=0,\\
d\pmb{\rho}(v)&=0,\\
\iota_v d\alpha&=\sum_{k=1}^mT_kd\pmb{F_k}+\sum_{k=1}^m\overline{T}_kd\overline{\pmb{F_k}}+\mu d\pmb{\rho}.
\end{cases}
\end{equation}

\noi Let $X=\big(w_1,\dots,w_m,z_1,\dots,z_n\big)\in\MG$ and $v=\big(u_1,\dots,u_m,v_1,\dots,v_n\big)\in\tange_X\left(\MG\right)$. The system \eqref{E20} corresponds to:
\newpage
\begin{equation*}
2w_1dw_1(v)+\sum_{j=1}^n\lambda^1_{_j}\Big(z_jd\bar{z}_j(v)+\bar{z}_jdz_j(v)\Big)=0,
\end{equation*}
\begin{equation*}
\vdots
\end{equation*}
\begin{equation*}
2w_mdw_m(v)+\sum_{j=1}^n\lambda^m_{_j}\Big(z_jd\bar{z}_j(v)+\bar{z}_jdz_j(v)\Big)=0,
\end{equation*}

\begin{equation*}
\sum_{r=1}^m\Big(\bar{w}_rdw_r(v)+w_rd\bar{w}_r(v)\Big)+\sum_{j=1}^n\Big(\bar{z}_jdz_j(v)+z_jd\bar{z}_j(v)\Big)=0,
\end{equation*}

\begin{dmath}\label{E21}
\nonumber i \left[\sum_{r=1}^m\Big(dw_r(v)d\bar{w}_r-d\bar{w}_r(v)dw_r\Big) + \sum_{j=1}^n\Big(dz_j(v)d\bar{z}_j-d\bar{z}_j(v)dz_j\Big)\right]\\
\nonumber =\sum_{k=1}^m\left[T_k\left(2w_kdw_k+\sum_{j=1}^n\lambda^k_{_j}\Big(z_jd\bar{z}_j+\bar{z}_jdz_j\Big)\right)\right] \\
+\sum_{k=1}^m \left[\overline{T}_k\left(2\bar{w}_kd\bar{w}_k+\sum_{j=1}^n\bar{\lambda}^k_{_j}\Big(\bar{z}_jdz_j+z_jd\bar{z}_j\Big)\right)\right]\\
\nonumber +\mu\left[\sum_{r=1}^m\Big(w_rd\bar{w}_r+\bar{w}_rdw_r\Big)+\sum_{j=1}^n\Big(z_jd\bar{z}_j+\bar{z}_jdz_j\Big)\right].
\end{dmath}

\noi Comparing coefficients in equation \eqref{E21} we have:

$$
idw_r(v)=2\bar{w}_r\overline{T}_r+\mu w_r,\quad\quad r\in\{1,\dots,m\}.
$$

$$
idz_j(v)=\left(2\Re\left(\sum_{k=1}^mT_k\lambda^k_{_j}\right)+\mu\right)z_j,\quad\quad j\in\{1,\dots,n\},
$$

\noi Then, vectors $v\in\tange_X\left(\MG\right)$ where $\iota_vd\alpha=0$; in other words, vectors in the kernel of $d\alpha$ at $X=\big(w_1,\dots,w_m,z_1,\dots,z_n\big)$ are of the form:

\begin{dmath*}
v\left(T_1,\dots,T_m,\mu;X\right)=
-i\left(2\bar{w}_1\overline{T}_1+\mu w_1,\dots,2\bar{w}_m\overline{T}_m+\mu w_m,\\
\left(2\Re\left(\sum_{k=1}^mT_k\lambda^k_{_1}\right)+\mu\right)z_1,
\dots,\left(2\Re\left(\sum_{k=1}^mT_k\lambda^k_{_n}\right)+\mu\right)z_n\right),
\end{dmath*}

\noi where $T_k\in\C$ for all $k\in\{1,\dots,m\}$ and $\mu\in\R$.\\

\noi Consider the condition $d\pmb{F_k}(v)=0$, when $v=v\left(T_1,\dots,T_m,\mu;X\right)$, where $T_k\in\C$ for all $k\in\{1,\dots,m\}$ and $\mu\in\R$:

\begin{dmath*}
2w_k\left[-i\left(2\bar{w}_k\overline{T}_k+\mu w_k\right)+\sum_{j=1}^n\lambda^k_{_j}\left[i\left(2\Re\left(\sum_{k=1}^mT_k\lambda^k_{_j}\right)+\mu\right)|z_j|^2-i\left(2\Re\left(\sum_{k=1}^mT_k\lambda^k_{_j}\right)+\mu\right)|z_j|^2\right]\right]=0.
\end{dmath*}

\noi Then 
$$
-4|w_k|^2\overline{T}_k-2\mu w^2_k=0.
$$
\noi If $w_k\not=0$ for $k\in\{1,\dots,m\}$ it follows that ${T}_k$ depends only on $\mu$:
$$
T_k=T_k(\mu)=-\frac{1}{2}\mu\frac{\bar{w}_k}{w_k}.
$$

\noi The vector $v\left(T_1,\dots,T_m,\mu;X\right)$ has to be tangent to the sphere $\s^{2n+1}$, and therefore it must satisfy the following condition:
{\small$$
\Re\left(-i\sum_{r=1}^m\left(2\bar{w}^2_r\overline{T}_r+\mu|w_r|^2\right)-i\sum_{j=1}^n\left(2\Re\left(\sum_{k=1}^nT_k\lambda^k_{_j}\right)+\mu\right)|z_j|^2\right)=0.
$$}

\noi  In other words:
$$
\Re\left(-2i\sum_{r=1}^m\left(\bar{w}^2_r\overline{T}_r\right)\right)=0.
$$

\noi If we consider the vector $v\big(T_1,\dots,T_m,\mu;X\big)$ with $T_k=\frac{-1}{2}\mu\frac{\bar{w}_k}{w_k}$ when $w_k\not=0$ for all $k\in\{1,\dots,m\}$, we have that it is orthogonal to the sphere $\s^{2n+1}$ since $\mu$ is real:

$$
\Re\left(-2i\sum_{k=1}^m\left(-\frac{1}{2}\bar{w}^2_k\mu\frac{w_k}{\bar{w}_k}\right)\right)=\Re\left(i\sum_{k=1}^m\mu|w_k|^2\right)=0.
$$

\noi If $w_k=0$ for some $k's$, without loss of generality we can suppose that the  first $\ell$ coordinates $w_k$ are $0$, we have:

$$
\Re\left(-2i\left(\sum_{r=1}^\ell 0\overline{T}_r+\sum_{k=\ell+1}^m\mu|w_\ell|^2\right)\right)=0,
$$
\noi  since $\mu$ is real.\\

\noi Observe that even if $w_k=0$ for all $k\in\{1,\dots,m\}$, the vector $v\big(T_1,\dots,T_m,\mu;X\big)$ is orthogonal to the sphere $\s^{2n+1}$.\\

\noi Therefore at a point where $w_k\not=0$ for all $k\in\{1,\dots,m\}$,   vectors in the kernel of $d\alpha$ depend on a real parameter $\mu$, so the kernel of $d\alpha$ has dimension one.\\

\noi Without loss of generality we suppose that the  first $\ell$ coordinates $w_k$ are $0$. We can show, as in  the proof of lemma \ref{L3}, that the $\R$-linear map $\phi_X:\C^{\ell}\times\R\cong\R^{2\ell+1}\to\C^{n+1}\cong\R^{2n+2}$ defined by 
$$
\phi_X\big(T_1,\dots,T_\ell,\mu;X\big)=v\Big(T_1,\dots,T_\ell,T_{\ell+1}(\mu),\dots,T_m(\mu),\mu;X\Big),
$$
is injective.\\

\noi Therefore at a point where $\ell$ coordinates $w_k$ are equal to $0$ with $\ell, k\in\{1,\dots,m\}$,  vectors in the kernel of $d\alpha$ depend on $2\ell$ complex numbers and a real number $\mu$, so the kernel of $d\alpha$ has dimension $2\ell+1$. 
\end{proof}

\noi Let $W_\ell$ be the set of points $\big(w_1,\dots,w_m,z_1,\dots,z_n\big)\in\MG$ such that $\ell$ coor\-dinates $w_k$ are equal to zero for  $\ell, k\in\{1,\dots,m\}$. Then $W_\ell$ is a real analytic subvariety of $\MG$ of real codimension $2\ell$. \\

 \noi Let $v\big(T_1,\dots,T_m,\mu;X\big)$ be a vector in $\ker\big(d\alpha_X\big)$.  We want to know when this vector is in the kernel of $\alpha_X$; i.e., when $v\big(T_1,\dots,T_m,\mu;X\big)$ satisfies the following equation: 
 \small{\begin{eqnarray*}
\sum_{k=1}^m\Big(
\left(2w^2_kT_k+\mu|w_k|^2\right)+\left(2\bar{w}^2_k\overline{T}_k+\mu|w_k|^2\right)
\Big)\\
+\sum_{j=1}^n
\left(
\left(2\Re\left(\sum_{k=1}^mT_k\lambda^k_{_j}\right)+\mu\right)|z_j|^2+
\left(2\Re\left(\sum_{k=1}^mT_k\lambda^k_{_j}\right)+\mu\right)|z_j|^2
\right)=0.
\end{eqnarray*}
}
 
\noi Then
$$
 \mu+2\sum_{k=1}^m\Re\left(w^2_kT_k\right)+2\sum_{j=1}^n\Re\left(\sum_{k=1}^mT_k\lambda^k_{_j}\right)|z_j|^2=0. 
$$
 
\noi Since $T_kw^2_k+\sum_{j=1}^nT_k\lambda^k_{_j}|z_j|^2=0$, it follows that $\mu=0$.\\

\noi Hence, if $\mu=0$ and  $w_k\not=0$ for all $k\in\{1,\dots,m\}$, we have that $T_k=0$ for all $k\in\{1,\dots,m\}$. Therefore, in the set of points $X\in\MG$ such that all coordinates $w_k$ are different from zero, the form $\alpha$ is a contact form. In other words,
$$
Rank\Big(d\alpha_X|_{\K_\alpha(X)}\Big)=2(n-1),
$$
\noi when $w_k\not=0$ for all $k\in\{1,\dots,m\}$.\\

\noi If $\mu=0$ and $\ell$ coordinates $w_k$ are equal to zero with $\ell,k\in\{1,\dots,m\}$, we have that $\Big[\K_\alpha(X)\cap\ker(d\alpha_X)\Big]$ is a $2\ell$-dimensional real subspace parametrized by $\ell$ complex numbers.

\begin{defi}
We will denote this $2\ell$-dimensional vector space at the point $X\in W_\ell$ with $\ell$ coordinates $w_k$ equal to zero by $\Pi_\ell(X)$.
\end{defi}

\subsection{Conductive Confoliations.}\label{SS3}

\noi The following definitions were given by S. J. Altschuler  in \cite{Alt} and generalized by S. J. Altschuler and F. Wu in \cite{AltWu}:

\begin{defi}\label{D8}
If $M^{2\ell+1}$ is a ($2\ell+1$)-manifold, the space of \emph{conductive confoliations}, $Con\left(M^{2\ell+1}\right)$, is defined to be the subset of $\alpha\in \Lam^1\left(M^{2\ell+1}\right)$, the vector space of differentiable 1-forms, such that 
\begin{itemize}
\item $\alpha$ is a positive confoliation: $\ast\Big(\alpha\wedge (d\alpha)^\ell\Big)\geqslant 0$, where $\ast$ denotes the  Hodge operator with respect to some fixed Riemannian metric;
\item every point $p\in{M}^{2\ell+1}$ is accessible from a contact point $q\in{M}^{2\ell+1}$ of $\alpha$: there is a smooth path $\gamma:[0,1]\to{M}^{2\ell+1}$ from $p$ to $q$ with $\gamma'(x)$ in the orthogonal complement of $\ker\Big(\ast\left(\alpha\wedge (d\alpha)^{\ell-1}\right)\Big)$ for  all $x$.
\end{itemize} 
\end{defi}

\noi We have that  $\ast\Big(\alpha\wedge (d\alpha)^{\ell-1}\Big)$ is a $2$-form which we denote by $\tau$. Let us denote the orthogonal complement of $\ker(\tau)$ by $\big[\ker(\tau)\big]^\perp$ and by  $\big[\ker(\tau)\big]^\perp(P)$ this subspace of
$\tange_P\left(M^{2\ell+1}\right)$ at the point $P\in{M}^{2\ell+1}$.

\begin{obs}\label{O2}$\;$
\begin{itemize}
\item At a point in $M$ where $Rank\Big(d\alpha|_{\ker(\alpha)}\Big)=2\ell$, the form $\alpha$ is a contact form on $M$ and $\big[\ker(\tau)\big]^\perp=\ker(\alpha)$  at that point, hence dimension of $\big[\ker(\tau)\big]^\perp$ is equal to $2\ell$.

\item At a point  $P\in{M}$ where $Rank\Big(d\alpha|_{\ker(\alpha)}\Big)=2\ell-2$, the dimension of 
$\big[\ker(\tau)\big]^\perp(P)$ is equal to two. In this case  
$$
\big[\ker(\tau)\big]^\perp(P)=\Big[\ker(\alpha)\cap\ker(d\alpha)\Big](P).
$$
\item At a point  $P\in{M}$ where $Rank\Big(d\alpha|_{\ker(\alpha)}\Big)<2\ell-2$,  $\big[\ker(\tau)\big]^\perp(P)=\{0\}$.
\end{itemize}
\end{obs}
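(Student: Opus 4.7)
The observation is a pointwise linear-algebraic claim, so my plan is to fix $P\in M^{2\ell+1}$, pass to an adapted orthonormal coframe, and compute $\tau = *\bigl(\alpha\wedge(d\alpha)^{\ell-1}\bigr)$ directly. First I would pick an orthonormal basis $\{\theta^0,\theta^1,\ldots,\theta^{2\ell}\}$ of $T_P^*M$ with $\alpha = a\,\theta^0$, where $a = |\alpha(P)|$, and then apply the spectral theorem for skew-symmetric endomorphisms of the Euclidean space $\ker\alpha$ to refine the basis so that
$$
\omega \;:=\; d\alpha\big|_{\ker\alpha} \;=\; \sum_{i=1}^{k}\mu_i\,\theta^{2i-1}\wedge\theta^{2i},\qquad \mu_i>0,\qquad 2k = \mathrm{rank}\bigl(d\alpha|_{\ker\alpha}\bigr).
$$

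On the full tangent space one then has $d\alpha = \omega + \theta^0\wedge\eta$ for a unique $\eta\in\mathrm{span}(\theta^1,\ldots,\theta^{2\ell})$. Because $(\theta^0)^2=0$, the binomial expansion of $(d\alpha)^{\ell-1}$ retains only two terms, and wedging with $\alpha = a\,\theta^0$ kills the one involving $\theta^0\wedge\eta$, leaving the clean identity
$$
\alpha\wedge(d\alpha)^{\ell-1} \;=\; a\,\theta^0\wedge\omega^{\ell-1}.
$$
The whole statement then follows by inspecting $\omega^{\ell-1}$ in each rank range. If $2k=2\ell$, then $\omega^{\ell-1}$ is a positive combination of the codimension-two monomials $\theta^1\wedge\cdots\widehat{\theta^{2i-1}\theta^{2i}}\cdots\wedge\theta^{2\ell}$, so taking Hodge duals yields $\tau = \sum_i c_i\,\theta^{2i-1}\wedge\theta^{2i}$ with $c_i>0$, which is nondegenerate on $\ker\alpha$ with $\ker\tau = \mathrm{span}(e_0)$; hence $[\ker\tau]^\perp = \ker\alpha$ has dimension $2\ell$, and the parallel identity $\alpha\wedge(d\alpha)^\ell = a\,\theta^0\wedge\omega^\ell\neq 0$ certifies that $\alpha$ is contact at $P$. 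If $2k=2\ell-2$, then $\omega^{\ell-1}$ reduces to a positive multiple of $\theta^1\wedge\cdots\wedge\theta^{2\ell-2}$, so $\tau=\pm c\,\theta^{2\ell-1}\wedge\theta^{2\ell}$ with $c>0$; consequently $\ker\tau = \mathrm{span}(e_0,\ldots,e_{2\ell-2})$ and $[\ker\tau]^\perp = \mathrm{span}(e_{2\ell-1},e_{2\ell})$, a two-plane that coincides with the radical of $d\alpha|_{\ker\alpha}$. If $2k<2\ell-2$, then $\omega^{\ell-1}=0$ for degree reasons (each pair $\theta^{2i-1}\wedge\theta^{2i}$ can appear at most once in the wedge product), so $\tau\equiv 0$ at $P$ and $[\ker\tau]^\perp=\{0\}$.

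The step I expect to require the most care is a bookkeeping subtlety in the middle case: the natural output of the computation is $[\ker\tau]^\perp = \mathrm{rad}\bigl(d\alpha|_{\ker\alpha}\bigr)$, whereas the observation writes $\ker\alpha\cap\ker(d\alpha)$ with $\ker(d\alpha)$ read as the radical on the full $T_PM$. Using $d\alpha = \omega+\theta^0\wedge\eta$, a vector $v\in\mathrm{rad}(\omega)$ belongs to the full radical of $d\alpha$ if and only if, in addition, $\eta(v)=0$. This extra condition holds automatically for the canonical $1$-forms of Propositions~\ref{P3} and \ref{P6}: the dimension counts performed there for $\Pi_\ell(X)$ force the two radicals to agree, and this is the sense in which the observation is invoked in the sequel.
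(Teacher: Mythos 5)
Your proposal is correct, and since the paper states Observation \ref{O2} without any proof, your adapted-coframe computation ($\alpha=a\,\theta^0$, $d\alpha=\omega+\theta^0\wedge\eta$, hence $\alpha\wedge(d\alpha)^{\ell-1}=a\,\theta^0\wedge\omega^{\ell-1}$) is precisely the pointwise linear algebra the authors leave implicit; all three bullets then follow by inspecting $\omega^{\ell-1}$ as you do. The caveat you raise about the middle bullet is genuine and you resolve it correctly: in general one only obtains $\big[\ker(\tau)\big]^\perp(P)=\mathrm{rad}\big(d\alpha|_{\ker(\alpha)}\big)(P)$, and this equals $\big[\ker(\alpha)\cap\ker(d\alpha)\big](P)$, with $\ker(d\alpha)$ taken in all of $\tange_P(M)$, exactly when $\eta$ annihilates that radical --- equivalently when $\ker(d\alpha_P)$ is $3$-dimensional; for an arbitrary $1$-form with $\mathrm{rank}\big(d\alpha|_{\ker(\alpha)}\big)=2\ell-2$ the intersection can be only $1$-dimensional (e.g.\ $d\alpha=\theta^1\wedge\theta^2+\dots+\theta^{2\ell-3}\wedge\theta^{2\ell-2}+\theta^0\wedge\theta^{2\ell-1}$). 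In the situations where the observation is actually invoked, this extra hypothesis is supplied by the kernel dimension counts of Propositions \ref{P3} and \ref{P6} (the full kernel of $d\alpha$ has dimension three at the relevant points, and $\Pi_s(X)$, resp.\ $\Pi_1(X)$, is the $2$-dimensional intersection), so your reading agrees with the intended use in Lemma \ref{L6} and its analogues.
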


\begin{theo}[Theorem 2.8, \cite{AltWu}]\label{T2}
If $\alpha\in Con(M^{2\ell+1})$ then $\alpha$ is $C^\infty$ close to a contact form.
\end{theo}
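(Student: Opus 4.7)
The plan is to use a parabolic smoothing argument. Starting from $\alpha_0 := \alpha$, I would evolve the $1$-form by the Hodge heat flow $\partial_t \alpha_t = -\Delta \alpha_t$ on $M^{2\ell+1}$ with respect to the fixed Riemannian metric. Standard parabolic theory yields short-time existence, smoothness and $C^\infty$-convergence $\alpha_t \to \alpha$ as $t \to 0^+$, so it is enough to prove that for every sufficiently small $t > 0$ the form $\alpha_t$ is everywhere contact, that is $\alpha_t \wedge (d\alpha_t)^\ell > 0$ pointwise.

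The object to track is the scalar $f_t := \ast\bigl(\alpha_t \wedge (d\alpha_t)^\ell\bigr)$, which measures contactness at each point. A direct (but lengthy) computation shows that $f_t$ satisfies a parabolic evolution of schematic form $\partial_t f_t = \Delta f_t + L(\alpha_t, d\alpha_t)$, where the lower-order terms depend smoothly on the evolving form and its first derivatives. By the positive-confoliation hypothesis one has $f_0 \geqslant 0$, and by the definition of $Con(M^{2\ell+1})$ there is at least one contact point $q$ where $f_0(q) > 0$.

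The heart of the argument is then a strong minimum principle for $f_t$. Positivity at $q$ persists for small $t$ by continuity; to transport it to an arbitrary $p \in M^{2\ell+1}$ one exploits the conductive path $\gamma$ from $p$ to $q$ furnished by Definition \ref{D8}: its velocity lies in $[\ker(\tau)]^\perp$, which is precisely the direction in which the operator controlling $f_t$ carries information. A chain of local Harnack-type inequalities along $\gamma$ — or, equivalently, a H\"ormander-type bracket-generating propagation applied to the smooth vector fields spanning $[\ker(\tau)]^\perp$ — then forces $f_t(p) > 0$ for every $t > 0$.

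The main obstacle, and the genuinely new input over a vanilla heat-flow argument, is the degeneracy that the minimum principle has to overcome: in directions tangent to $\ker(\tau)$ the effective transport operator collapses, so the uniformly parabolic strong maximum principle is not directly applicable. The right substitute is a subelliptic propagation estimate whose hypotheses must be extracted from the geometric conductivity condition, namely that iterated Lie brackets of sections of $[\ker(\tau)]^\perp$ span the full tangent bundle along the conductive paths. Establishing this bracket-generation and making the propagation quantitative enough to conclude strict positivity of $f_t$ on all of $M^{2\ell+1}$ for every $t>0$ is the delicate technical step. Once this is achieved, any sufficiently small $t > 0$ produces a $C^\infty$-close contact form $\alpha_t$, which is the desired conclusion.
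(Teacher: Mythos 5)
This statement is not proved in the paper at all: it is quoted verbatim as Theorem~2.8 of Altschuler--Wu \cite{AltWu} and used as a black box, so there is no internal argument to compare yours against. What you have written is essentially an outline of the strategy of the cited source (evolve the form by a heat-type flow, track the contactness function $f_t=\ast\bigl(\alpha_t\wedge(d\alpha_t)^\ell\bigr)$, and propagate positivity from the contact points to everywhere using the conductivity hypothesis), but as a proof it has two genuine gaps, and they are precisely the places where all the work of \cite{AltWu} lies. First, the assertion that under the plain Hodge heat flow $f_t$ satisfies $\partial_t f_t=\Delta f_t+L(\alpha_t,d\alpha_t)$ with a reaction term of a usable structure is not a routine computation one may wave at: for a minimum principle to preserve nonnegativity, let alone create strict positivity, the term $L$ must have a sign or vanishing structure on the zero set of $f_t$ (e.g.\ $L\geq -Cf_t$), and nothing in your sketch establishes this; indeed the choice of flow and the verification of this structural inequality is the core computation of the original paper, not an afterthought.

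Second, your mechanism for transporting positivity along the conductive path is not supported by the hypotheses. Definition~\ref{D8} only guarantees accessibility of every point from a contact point by a path whose velocity lies in $\bigl[\ker(\tau)\bigr]^\perp$; it does \emph{not} assert that iterated Lie brackets of sections of $\bigl[\ker(\tau)\bigr]^\perp$ span the tangent bundle, so the H\"ormander-type subelliptic propagation you invoke rests on a hypothesis you do not have. What is actually needed (and what Altschuler--Wu prove) is a propagation-of-positivity statement for degenerate parabolic operators along the diffusion directions themselves, in the spirit of Bony's propagation of maxima: the zero set of a nonnegative supersolution is invariant along curves tangent to the directions where the operator is nondegenerate, which is exactly how the conductive paths are used. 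Making that precise, and quantitative enough to conclude $f_t>0$ everywhere for all small $t>0$, is the delicate analytic content of Theorem~2.8; since your proposal defers exactly this step (and the evolution inequality feeding into it), it is a plausible plan but not a proof, and within the present paper the correct course is what the authors do: cite \cite{AltWu}.
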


\noi These forms are also called \emph{transitive confoliations} by Y. Eliashberg and W. P. Thurston (see \cite{ET}), since we can connect any point of the manifold  to a point where the form $\alpha$ is contact by a Legendrian path of finite length.\\

  \noi Since $W_s$  for $s=1$ is of real codimension two, by the previous results, it follows that it does not disconnect $\Mgu$. Hence:

\begin{prop}\cite[Proposition 1]{BLV}\label{P7}
Let $m=1$, $n>3$ and $s=1$.  Let $\ast$ denote the Hodge opera\-tor for a given Riemannian metric on a moment-angle manifold of mixed type $\Mgu$. Then for the appropriate orientation of $\Mgu$  one has that
\begin{enumerate}
\item for $X=\big(w_1,z_1,\dots,z_n\big)\in\Mgu-W_1$ 
$$
\ast\Big(\alpha\wedge (d\alpha)^{n-1}\Big)(X)>0,
$$
\item for $X\in W_1$
$$
\ast\Big(\alpha\wedge (d\alpha)^{n-1}\Big)(X)=0.
$$
\end{enumerate}
Therefore $\alpha$ is a positive confoliation on $\Mgu$.
\end{prop}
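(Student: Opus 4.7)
The plan is to separate two regimes identified in Proposition \ref{P3}: on the open set $\Mgu-W_1$ the form $\alpha$ turns out to be contact, so $\alpha\wedge(d\alpha)^{n-1}$ is nowhere zero, while on $W_1$ the intersection $\K_\alpha(X)\cap\ker(d\alpha_X)$ is large enough to annihilate $(d\alpha)^{n-1}$. Since $\dim\Mgu=2n-1$, the form $\alpha\wedge(d\alpha)^{n-1}$ has top degree and its Hodge dual is a real-valued function whose sign is what must be controlled.

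For $X\in\Mgu-W_1$, i.e.\ $w_1\neq 0$, Proposition \ref{P3}(1) gives $\dim\ker(d\alpha_X)=1$, and the calculation preceding Proposition \ref{P7} (showing that a vector $v(T,\mu;X)\in\K_\alpha(X)\cap\ker(d\alpha_X)$ forces $\mu=0$ and then $t=0$) yields $\K_\alpha(X)\cap\ker(d\alpha_X)=\{0\}$. Hence the one-dimensional ``Reeb direction'' $\ker(d\alpha_X)$ is transverse to the hyperplane $\K_\alpha(X)$, so $d\alpha_X$ is non-degenerate on the $(2n-2)$-dimensional space $\K_\alpha(X)$. Consequently $(d\alpha)^{n-1}|_{\K_\alpha(X)}$ is a volume form, and $\alpha\wedge(d\alpha)^{n-1}$ is a nonzero top form at $X$. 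Since the real analytic subvariety $W_1$ has real codimension two, the complement $\Mgu-W_1$ is connected, so the continuous nowhere-zero function $\ast\bigl(\alpha\wedge(d\alpha)^{n-1}\bigr)$ on it has constant sign; the ``appropriate orientation'' of $\Mgu$ is fixed to be the one making this sign positive, which establishes part (1).

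For $X\in W_1$, Proposition \ref{P3}(2) gives $\dim\ker(d\alpha_X)=3$, and the earlier identification $\K_\alpha(X)\cap\ker(d\alpha_X)=\Pi_1(X)$ shows this intersection is two-dimensional. Thus $d\alpha_X$ restricted to the $(2n-2)$-dimensional space $\K_\alpha(X)$ has a two-dimensional kernel and hence rank at most $2n-4=2(n-2)$; a skew form of rank $2(n-2)$ on a $(2n-2)$-dimensional space has vanishing $(n-1)$-st exterior power, so $(d\alpha)^{n-1}|_{\K_\alpha(X)}=0$ and therefore $\alpha\wedge(d\alpha)^{n-1}(X)=0$, giving part (2). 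The main obstacle is not in the two local dimension counts, which are essentially immediate corollaries of Proposition \ref{P3} and the description of $\Pi_1(X)$; rather, it is the global coherence of the orientation, which rests crucially on the topological fact that the codimension-two real analytic subvariety $W_1$ fails to disconnect $\Mgu$, so that a single choice of orientation can make the positivity hold everywhere on $\Mgu-W_1$ simultaneously.
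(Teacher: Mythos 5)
Your proof is correct and follows essentially the same route as the paper, which derives this statement (quoting it from \cite{BLV}) from exactly the computations you invoke: the kernel dimensions of $d\alpha$ in Proposition \ref{P3}, the $\mu=0$, $t=0$ argument giving $\K_\alpha(X)\cap\ker(d\alpha_X)=\{0\}$ off $W_1$ (contactness) and the two-dimensional space $\Pi_1(X)$ on $W_1$. Your bridging linear algebra (transversality of the one-dimensional kernel with the hyperplane $\K_\alpha$ giving nondegeneracy, and the rank drop killing $(d\alpha)^{n-1}$), together with the sign-constancy argument on the connected complement of the codimension-two set $W_1$, correctly supplies the details the paper leaves to \cite{BLV}.
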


\noi  Let denote by $\tau$ the $2$-form  $\ast\Big(\alpha\wedge(d\alpha)^{n-2}\Big)$.\\

\begin{lema}\cite[Lemma 3]{BLV}\label{L6}
 Let $X\in W_1$. Then, there exists a smooth parametrized curve
$\gamma:[-1,1]\to\Mgu$ such that 
\begin{enumerate}
\item $|\gamma'(x)|\neq0$ for all $x\in[-1,1]$
\item $\gamma(0)=X$ and $\gamma'(0)\in \big[\ker(\tau)\big]^\perp(X)=\Pi_1(X)$
\item If $x\in[-1,1]$ with $x\not=0$, $\gamma(x)\notin{W_1}$  and $\gamma'(x) \in\big[\ker(\tau)\big]^\perp(\gamma(x))=\K_\alpha\big(\gamma(x)\big)$.
\end{enumerate}
\end{lema}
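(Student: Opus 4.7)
The plan is to obtain $\gamma$ as an integral curve of a smooth vector field $V$ taking values in the hyperplane distribution $\ker\alpha$ on a neighborhood of $X$, with $V(X)$ equal to a prescribed nonzero element $v_0\in\Pi_1(X)$ whose $1$-jet at $X$ is tailored to force $\gamma$ off $W_1$ at second order. Fix $T_0\in\C\setminus\{0\}$ and put $v_0:=v(T_0,0;X)$; by the explicit formula from Proposition~\ref{P3}, $v_0$ is nonzero and its $w_1$-coordinate vanishes, so $v_0\in\tange_X W_1$. Since $\alpha$ is nowhere vanishing on $\Mgu$ by Lemma~\ref{L4}, $\ker\alpha$ is a smooth codimension-one subbundle of $\tange(\Mgu)$; extend $v_0$ to a smooth section $V$ of $\ker\alpha$ on a neighborhood $U$ of $X$ in $\Mgu$, and let $\gamma$ be its integral curve through $X$, reparametrized so as to be defined on $[-1,1]$. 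By construction $\gamma(0)=X$, $\gamma'(0)=v_0\in\Pi_1(X)$, $\gamma'(x)\in\K_\alpha(\gamma(x))$, and $|\gamma'(x)|\neq 0$ near $x=0$, so (1), (2) and the tangency assertion of (3) hold automatically. The only remaining point is $\gamma(x)\notin W_1$ for $x\neq 0$.

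Set $g(x):=w_1(\gamma(x))\in\C$. Then $g(0)=0$ and $g'(0)=dw_1(v_0)=0$, so the key task is to arrange
$$g''(0)=V\bigl(dw_1(V)\bigr)(X)\neq 0,$$
and this is where the main obstacle lies: one must verify that the contact constraint $V\in\ker\alpha$ does not couple the normal $w_1$-component of $V$ to its directional derivative along $v_0$ at $X$. Introduce local real coordinates $(u,v,y_1,\dots,y_{2n-3})$ on $\Mgu$ near $X$ with $w_1=u+iv$ and $W_1=\{u=v=0\}$. Inspection of $dF_s$, $d\rho_s$ and of the formula for $\alpha$ shows that at every point of $W_1$ the coefficients of $dw_1$ and $d\bar w_1$ vanish; hence $\partial_u,\partial_v\in\tange(\Mgu)\cap\ker\alpha$ along $W_1$, which makes the chart legitimate and forces $A_u:=\alpha(\partial_u)$ and $A_v:=\alpha(\partial_v)$ to vanish identically on $W_1$.

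Writing $V=V^u\partial_u+V^v\partial_v+V^y$ and $\alpha=A_u\,du+A_v\,dv+\alpha^y$, the condition $\alpha(V)\equiv 0$ becomes $A_uV^u+A_vV^v+\alpha^y(V^y)=0$. Differentiating this identity in the direction $v_0\in \tange_X W_1$ and using $V^u(X)=V^v(X)=A_u(X)=A_v(X)=0$, every term containing $v_0\cdot V^u$ or $v_0\cdot V^v$ has coefficient zero, leaving only the single scalar condition $v_0\cdot(\alpha^y(V^y))|_X=0$ on the $1$-jet of $V^y$. Consequently $v_0\cdot V^u|_X$ and $v_0\cdot V^v|_X$ can be prescribed freely; choosing the extension with $v_0\cdot V^u|_X=1$ and $v_0\cdot V^v|_X=0$ yields $g''(0)=1\neq 0$. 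Then $|g(x)|^2=\tfrac14\,|g''(0)|^2\,x^4+O(x^5)>0$ for small $x\neq 0$, so $\gamma(x)\notin W_1$, and a final rescaling produces the desired smooth curve $\gamma:[-1,1]\to\Mgu$ satisfying (1)--(3).
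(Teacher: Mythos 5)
Your proposal is correct, and it reaches the conclusion by a genuinely different mechanism than the paper. The paper fixes a metric, builds on a neighborhood $K'\subset W_1$ of $X$ a field $\cv_2=Y+\varphi\,\cv_1$, where $Y$ is a nonvanishing section of $\Pi_1$, $\cv_1$ is a section of $\K_\alpha\cap\big[\tange(W_1)\big]^\perp$ obtained by orthogonal projection, and $\varphi$ vanishes only at $X$; it then extends $\cv_2$ to the ambient manifold, projects into $\K_\alpha$, and takes the integral curve through $X$, the escape from $W_1$ being driven by the fact that $\cv_2(P)\notin\tange_P(W_1)$ for $P\neq X$. You instead work at the single point $X$: you prescribe the $1$-jet of a section $V$ of $\K_\alpha$ with $V(X)=v_0\in\Pi_1(X)$ so that $g(x)=w_1(\gamma(x))$ has $g(0)=g'(0)=0$ but $g''(0)\neq 0$, and your jet computation in the chart $(u,v,y)$ — using exactly that $A_u$, $A_v$, $V^u$, $V^v$ all vanish at $X$ — correctly shows the constraint $\alpha(V)\equiv 0$ does not obstruct the choice $v_0\cdot V^u|_X=1$, $v_0\cdot V^v|_X=0$. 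The one point you assert rather than construct is the existence of a section of $\K_\alpha$ realizing that jet; this is routine (for instance, since $\alpha(\partial_u)(X)=0$ one can take a local section $e$ of $\K_\alpha$ with $e(X)=\partial_u$, e.g. $e=\partial_u-\frac{\alpha(\partial_u)}{\alpha(R)}R$ for a local field $R$ with $\alpha(R)\neq0$ and $R^u(X)=R^v(X)=0$, and add $f\,e$ to any $\K_\alpha$-extension of $v_0$ with $f(X)=0$ and $df(v_0)$ adjusted), so it is a detail, not a gap. Comparing the two: your argument is more explicit and quantitative (it exhibits the quadratic tangency of $\gamma$ with $W_1$ and needs no auxiliary function $\varphi$ or extension-and-projection along $W_1$), while the paper's construction is metric-based and stratum-wise, which is the form it then reuses almost verbatim for the singular sets $S$ with Whitney stratifications in Lemmas \ref{L7} and \ref{L8}, where a global defining function playing the role of $w_1$ and a clean product chart are not as immediately available.
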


\begin{proof} Let us fix a Riemannian metric $g$. For $P\in W_1$ let $\big[\tange_P(W_1)\big]^\perp$ denote the $2$-dimensional subspace of $\tange\left(\Mgu\right)$, which is orthogonal to $\tange_P(W_1)$ at $P$.\\

\noi Let us first show  that there exists an open neighborhood $\U\subset W_1$ of $X\in W_1$ and a smooth and non-vanishing vector field $\cv:\U\to \tange\left(\Mgu\right)$ defined on $\U$ such that $\cv(P)\in\K_\alpha(P)\cap \big[\tange_P(W_1)\big]^\perp$ for all $P\in\U$.\\

\noi Indeed, Let $\ls(P)=\big[\tange_P(W_1)\big]^\perp\,\cap\,\K_\alpha(P)$. Then $\ls(P)$ has dimension two if $\big[\tange_P(W_1)\big]^\perp\subset\K_\alpha(P)$ or $\ls(P)$ has dimension one if $\big[\tange_P(W_1)\big]^\perp$ is transverse to $\K_\alpha(P)$.\\

\noi  Let $v_{_X}\in \ls(X)$ be a non zero vector. Extend this vector anchored at $X$ to a smooth vector field $\tilde{\cv}:K\to\tange\left(\Mgu\right)$ defined in a neighborhood $K\subset W_1$ of $X\in W_1$. Let $\pi_{_P}:\tange_P\left(\Mgu\right)\to\K_\alpha(P)$ be the orthogonal projection for $P\in{K}$. Consider the vector field defined on $K$ by $\cv_1(P)=\pi_{_P}\left(\tilde{\cv}(P)\right)$. \\

\noi Then $\cv_1$ is a smooth vector field and by continuity $\cv_1$ satisfies the required pro\-perty in a possible smaller neighborhood $\U$.\\

\noi Let $Y:K'\to\tange(W_1)\subset\tange\left(\Mgu\right)$ is a nonvanishing vector field, defined in a possible smaller neighborhood $K'$contained in $K$, tangent to the foliation $\mathcal{F}$ (in other words $Y(P)\in \Pi_1(P)$ for all $P\in K'$). Let $\varphi:K'\to\R$ be a smooth function that vanishes only at $X$ (again possible in an even smaller neighborhood).
Let $\cv_2=Y+\varphi\cv_1$. This is a vector field defined in a neighborhood of $X$ in $W_1$ having the property that
$\cv_2(X)=Y(X)$  and $\cv_2(P)\in\K_{\alpha}(P)$ but $\cv_2(P)\notin \tange (W_1)$ for all $P\not=X$.\\

\noi  To finish the proof of the lemma we have that by standard extension theorems (partition of unity) there exists an extension of $\cv_2$ to a nonsingular vector field $\cv_3:\V\to\tange\left(\Mgu\right)$ 
defined on an open neighborhood $\V\subset \Mgu$ of $X$. The vector field defined by $\cv(P)=\pi_{_P}\left(\cv_3(P)\right)$ has property that $\cv(P)\in\K_\alpha(P)$ for all $P\in\V$ and $\cv(P)=\cv_3(P)$ if $P\in K'$.\\

\noi By multiplying the vector field $\cv$ by a positive constant $c>0$, if necessary, we can assume that all the solutions of the differential equation defined by the vector field $c\cv$ on $\V$ are defined in the interval $(-2,2)$.\\

\noi If $\gamma:[-1,1]\to\U$ is the solution of the differential equation determined by $c\cv$ and satisfying the initial condition $\gamma(0)=X$, then this parametrized curve, if $c$ is sufficiently small, satisfies all the required properties.
 \end{proof}

\noi The open set  $V_1=\Mgu-W_1$ is connected since $W_1$ is of codimension two. This set is the set where the form $\alpha$ is of contact type and it follows by Darboux theorem that every two points in $V_1$ can be connected by a smooth curve with non vanishing tangent vector contained in $\K_\alpha$ (a regular Legendrian curve). \\

\noi By proposition \ref{P7} and lemma \ref{L6}  imply that in the moment-angle manifolds of mixed type $\Mgu$  the $1$-form $\alpha$ defines a \emph{conductive confoliation} in the sense of J. S. Altschuler and L. F.  Wu,  therefore $\alpha$ can be approximated in the $C^\infty$ topology by a contact form.  We have the following theorem:

\begin{theo}\cite[Theorem 12]{BLV}\label{T3}
Let $m=1$, $n>3$, $s=1$ and let $\Lam=\big(\lambda_1,\ldots, \lambda_n\big)$, with $\lambda_j\in\C$, be an admissible configuration. The manifolds 
$$
\Mgu=\underset{j=1}{\overset{2\ell+1}{\sharp}}\left(\s^{2d_j}\times\s^{2n-2d_j-1}\right),$$
where $d_j=n_j+\dots+n_{j+\ell-1}$,  admit contact structures.
\end{theo}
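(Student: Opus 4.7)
The plan is to verify that the $1$-form $\alpha$ on $\Mgu$ is a conductive confoliation in the sense of Definition \ref{D8}, and then invoke the Altschuler--Wu theorem (Theorem \ref{T2}) to conclude that $\alpha$ can be $C^{\infty}$-approximated by a contact form, whose kernel is the desired contact structure. The diffeomorphism type of $\Mgu$ is then supplied by Theorem \ref{T1}, yielding contact structures on the stated connected sums.

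First I would record that $\dim\Mgu=2n-1=2(n-1)+1$, so the relevant exponent in Definition \ref{D8} is $\ell=n-1$ and the auxiliary $2$-form is $\tau=\ast(\alpha\wedge(d\alpha)^{n-2})$. The positive confoliation condition $\ast(\alpha\wedge(d\alpha)^{n-1})\geq 0$ is then exactly Proposition \ref{P7}, which also tells us that this top form vanishes precisely on the real codimension-two subvariety $W_1$, while on $V_1:=\Mgu-W_1$ the form $\alpha$ is already contact.

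Next I would check the accessibility condition. For $X\in V_1$, the point $X$ is itself a contact point of $\alpha$, so there is nothing to prove. For $X\in W_1$, Lemma \ref{L6} supplies a smooth curve $\gamma\colon[-1,1]\to\Mgu$ with $\gamma(0)=X$, $\gamma'(0)\in\Pi_1(X)$, and for $x\neq 0$ the point $\gamma(x)$ lies in $V_1$ with $\gamma'(x)\in\K_\alpha(\gamma(x))$. By Observation \ref{O2}, $[\ker(\tau)]^\perp(P)$ equals $\K_\alpha(P)\cap\ker(d\alpha_P)=\Pi_1(P)$ on $W_1$ (where $\operatorname{Rank}(d\alpha|_{\K_\alpha})=2(n-2)$) and equals $\K_\alpha(P)$ on $V_1$ (where $\operatorname{Rank}(d\alpha|_{\K_\alpha})=2(n-1)$), so $\gamma$ is an admissible accessibility path in both regimes. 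Thus $X$ is joined to the contact point $\gamma(1)\in V_1$ by a curve with tangent vectors in $[\ker(\tau)]^\perp$. Finally, since $W_1$ has real codimension $2$, the set $V_1$ is connected, and within $V_1$ the Darboux theorem for contact forms lets one join any two contact points by a Legendrian curve tangent to $\K_\alpha=[\ker(\tau)]^\perp$. Concatenating such paths confirms the second bullet of Definition \ref{D8}, so $\alpha\in Con(\Mgu)$.

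Having established that $\alpha$ is a conductive confoliation, Theorem \ref{T2} produces a contact $1$-form $\tilde\alpha$ arbitrarily $C^{\infty}$-close to $\alpha$, and $\ker\tilde\alpha$ is the sought-after contact structure. The main conceptual point — and the only potentially delicate one — is the matching of $[\ker(\tau)]^\perp$ with the two different $2$-dimensional spaces $\Pi_1(P)$ and $\K_\alpha(P)$ on $W_1$ and $V_1$ respectively; this is precisely what makes the curve of Lemma \ref{L6} legitimate for the accessibility condition despite the jump in the rank of $d\alpha|_{\K_\alpha}$ across $W_1$. Everything else is either cited directly from the preceding results or is the standard use of Darboux coordinates and partitions of unity already invoked in Lemma \ref{L6}.
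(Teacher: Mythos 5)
Your argument is correct and is essentially the paper's own proof: the authors likewise combine Proposition \ref{P7} (positive confoliation vanishing exactly on the codimension-two set $W_1$) with Lemma \ref{L6} and the connectedness of $\Mgu-W_1$ to conclude that $\alpha$ is a conductive confoliation, then apply the Altschuler--Wu theorem (Theorem \ref{T2}) and the diffeomorphism classification of Theorem \ref{T1}. Your explicit check via Observation \ref{O2} that $\big[\ker(\tau)\big]^\perp$ agrees with $\Pi_1$ on $W_1$ and with $\K_\alpha$ off it only makes the accessibility step more transparent than in the paper.
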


\noi Let now $m=1$, $n>3$ and $s>1$.

\begin{prop}\label{P8}
Let $m=1$, $n>3$ and $s>1$. Let $\ast$ denote the Hodge operator for a given Riemannian metric on a moment-angle manifold of mixed type $\Mg$. Then, for the appropriate orientation of $\Mg$, $\alpha$ is a positive confoliation:
\begin{itemize}
\item For $X=\big(w_1,\dots,w_s,z_1,\dots,z_n\big)\in\Mg$ such that $w^2_1+\dots+w^2_s\not=0$,
$$
\ast\Big(\alpha\wedge(d\alpha)^{n+s-2}\Big)(X)>0.
$$
\item For $X\in W_s$ such that $w^2_1+\dots+w^2_s=0$,
$$
\ast\Big(\alpha\wedge(d\alpha)^{n+s-2}\Big)(X)=0.
$$
\item The set of points $X\in\Mg$ such that $\ast\Big(\alpha\wedge(d\alpha)^{n+s-2}\Big)(X)=0$ is the real analytic set of real codimension two in $\Mg$ given by 
$$
W=\biggr\{X\in\Mg\;\big|\; w^2_1+\dots+w^2_s=0\biggl\}.
$$
\end{itemize}
\end{prop}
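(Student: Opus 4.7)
I reduce the positive-confoliation condition to the rank of $d\alpha|_{\K_\alpha}$ already computed in Proposition \ref{P3}, and then fix the sign by a connectedness argument on $\Mg\setminus W$. Note first that $\alpha\wedge(d\alpha)^{n+s-2}$ has total degree $1+2(n+s-2)=2n+2s-3=\dim\Mg$, so it is a top form; at any $X\in\Mg$, the scalar $\ast\bigl(\alpha\wedge(d\alpha)^{n+s-2}\bigr)(X)$ vanishes if and only if $d\alpha$ restricted to the $(2n+2s-4)$-dimensional subspace $\K_\alpha(X)$ is degenerate, equivalently $\K_\alpha(X)\cap\ker(d\alpha_X)\neq\{0\}$. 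This turns the pointwise statement into the linear algebra already carried out.

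Next I invoke the analysis following Proposition \ref{P3}. When $w_1^2+\dots+w_s^2\neq0$, a vector of $\ker(d\alpha_X)$ lies in $\K_\alpha(X)$ only if $\mu=0$, and the relation $\mu=-2t\sum_r|w_r|^2$ then forces $t=0$; hence $\K_\alpha(X)\cap\ker(d\alpha_X)=\{0\}$, $d\alpha|_{\K_\alpha(X)}$ is symplectic, and the top form is nonzero. When $w_1^2+\dots+w_s^2=0$, the same computation identifies the intersection with the $2$-dimensional plane $\Pi_s(X)$, so the rank of $d\alpha|_{\K_\alpha(X)}$ drops by at least $2$ and $(d\alpha|_{\K_\alpha(X)})^{n+s-2}=0$. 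This settles the second bullet and the qualitative half of the first.

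For the sign, observe that $W$ is cut out of $\Mg$ by the single complex equation $w_1^2+\dots+w_s^2=0$, so it is a real analytic subvariety of real codimension at least $2$; its singular stratum $\{w_1=\dots=w_s=0\}$ sits in codimension $2s\geq4$. By Theorem \ref{T1} the manifold $\Mg$ is a connected sum of products of positive-dimensional spheres, hence connected, so $\Mg\setminus W$ is connected. The continuous function $\ast\bigl(\alpha\wedge(d\alpha)^{n+s-2}\bigr)$ is nowhere zero on this connected open set, hence of constant sign; orient $\Mg$ so that this sign is positive. The third bullet is then immediate from the first two, since the zero locus of this function coincides with $W$.

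The hardest part is producing the sign: local algebra alone cannot give positivity, so one must combine the pointwise rank drop with the global connectedness of $\Mg\setminus W$, which in turn requires the codimension $\geq 2$ observation for the (possibly singular) variety $W$. Since all of the pointwise algebra has effectively been done earlier in the text, the proof reduces to assembling these pieces and making the codimension observation.
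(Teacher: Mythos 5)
Your proof is correct and takes essentially the route the paper intends: Proposition \ref{P8} is stated without an explicit proof, and the ingredients it relies on are exactly the ones you assemble, namely the computation following Proposition \ref{P3} showing $\K_\alpha(X)\cap\ker(d\alpha_X)=\{0\}$ when $w_1^2+\dots+w_s^2\neq0$ and $=\Pi_s(X)$ (two-dimensional) on $W$, together with the observation that $W$ is a codimension-two real analytic set. The details you add — the codimension-one linear algebra identifying the vanishing of $\ast\bigl(\alpha\wedge(d\alpha)^{n+s-2}\bigr)(X)$ with nontriviality of $\K_\alpha(X)\cap\ker(d\alpha_X)$, and the connectedness-of-$\Mg\setminus W$ plus choice-of-orientation argument that fixes the sign — are precisely the steps left implicit in the text.
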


\noi  Let denote by $\tau$ the $2$-form  $\ast\Big(\alpha\wedge(d\alpha)^{n+s-3}\Big)$.

\begin{lema}\label{L7}
Let $\vd$ be a compact $(2n+1)$-dimensional manifold. Let $\alpha$ be a positive confoliation, i.e., $\ast\Big(\alpha\wedge (d\alpha)^n\Big)(X)\geq0$ , where $\ast$ is the Hodge star operator for a given Riemannian metric. Suppose that the set 
$$
S=\biggr\{X\in\vd\;|\; \ast\Big(\alpha\wedge (d\alpha)^n\Big)(X)=0\biggl\},
$$ 
is real analytic set of real codimension at least two, then $\alpha$ is a conductive confoliation.
\end{lema}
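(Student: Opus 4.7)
The plan is to verify the second condition in the definition of conductive confoliation; the positivity $\ast\bigl(\alpha\wedge(d\alpha)^n\bigr)\geq 0$ is part of the hypotheses. So it is enough to show that every point $X\in\vd$ is joined, by a smooth path whose tangent lies in $[\ker(\tau)]^{\perp}$ at every instant, to some point where $\alpha$ is contact; here $\tau=\ast\bigl(\alpha\wedge(d\alpha)^{n-1}\bigr)$.

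First, because $S$ has real codimension at least two in the compact manifold $\vd$ (which I may assume connected, handling each component separately otherwise), the open set $\vd-S$ is connected and dense. On $\vd-S$ the form $\alpha$ is contact and $[\ker(\tau)]^{\perp}=\ker(\alpha)$ by Observation \ref{O2}, so the bracket-generating property of the contact distribution (equivalently, Darboux's theorem combined with Chow--Rashevskii) joins any two points of $\vd-S$ by a smooth Legendrian path, whose tangent lies in $\ker(\alpha)=[\ker(\tau)]^{\perp}$ everywhere. Fix once and for all a point $q\in\vd-S$; it then suffices to produce, for each $X\in S$, a smooth path from $X$ into $\vd-S$ whose tangent lies in $[\ker(\tau)]^{\perp}$.

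For this local construction I mimic Lemma \ref{L6}. Since $S$ is real analytic of codimension at least two, a Whitney stratification of $S$ has strata of codimension at least two; let $\Sigma$ be the stratum through $X$. The hyperplane $\ker(\alpha)(X)$, being of codimension one, is not contained in $T_X\Sigma$, so I may choose a vector $Y(X)\in\ker(\alpha)(X)\setminus T_X\Sigma$ and extend it by a partition-of-unity argument to a smooth vector field $Y$ on a neighborhood $U$ of $X$ with $Y(P)\in\ker(\alpha)(P)$ for every $P\in U$. The integral curve $\beta(t)$ of $Y$ through $X$ then satisfies $\beta'(t)\in\ker(\alpha)(\beta(t))$ automatically, and real analyticity of $S$ together with the isolated-zeros property of analytic functions of one real variable ensures $\beta(t)\notin S$ for all sufficiently small $t\ne 0$. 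To satisfy the tangent condition at $t=0$, where $[\ker(\tau)]^{\perp}(X)$ may be as small as $\{0\}$, reparametrize as $\gamma(s)=\beta(\phi(s))$ with $\phi$ smooth, $\phi(0)=\phi'(0)=0$ and $\phi(s)>0$ for $s>0$ (for instance $\phi(s)=s^{2}$). Then $\gamma'(0)=0\in[\ker(\tau)]^{\perp}(X)$ trivially, while for $s>0$ the point $\gamma(s)\in\vd-S$ so $[\ker(\tau)]^{\perp}(\gamma(s))=\ker(\alpha)(\gamma(s))$ contains $\gamma'(s)=\phi'(s)\,Y(\gamma(s))$. Concatenating with a Legendrian path in $\vd-S$ from $\gamma(\varepsilon)$ to $q$, and smoothing at the junction (which lies in the contact region), yields the required path.

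The main obstacle is the assertion that $\beta(t)$ actually escapes $S$ in finite time: in the $C^{\infty}$ category a codimension-two closed subset can in principle trap integral curves of a vector field transverse to it, but real analyticity, Whitney stratification, and the isolated-zeros theorem together rule this out. The other ingredients (extending $Y$ while preserving the $\ker(\alpha)$ condition, the reparametrization, and the final concatenation and smoothing) are routine and directly parallel to the construction in Lemma \ref{L6}.
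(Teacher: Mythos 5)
Your global strategy (connectedness and density of $\vd-S$ because $S$ has codimension at least two, Legendrian connectivity in the contact region via Darboux/Chow--Rashevskii, the reparametrization $\phi(s)=s^{2}$ to kill the tangent at the singular point, then concatenation) is the same as the paper's, and those parts are fine. The genuine gap is in the escape step, and you have flagged it yourself without actually closing it. First, the justification you offer --- ``real analyticity of $S$ together with the isolated-zeros property of analytic functions of one real variable'' --- does not apply: the integral curve $\beta$ of your vector field $Y$ is only $C^{\infty}$ (the field is built from $\ker(\alpha)$ by a partition of unity, so there is no analyticity to exploit), hence the composition of an analytic defining function of $S$ with $\beta$ is merely smooth and may vanish on a set accumulating at $0$ without vanishing identically. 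Second, and independently, your transversality hypothesis is too weak: you only require $Y(X)\in\ker(\alpha)(X)\setminus T_{X}\Sigma$, where $\Sigma$ is the stratum through $X$. That does not prevent the integral curve from running inside a higher-dimensional stratum whose tangent planes limit onto a plane containing $Y(X)$. A model example: $S\subset\R^{3}$ the union of the two coordinate axes $\{y=z=0\}\cup\{x=z=0\}$ (analytic, codimension two), $X$ the origin, $\Sigma=\{X\}$; any nonzero $Y(X)$ avoids $T_{X}\Sigma=\{0\}$, yet if $Y(X)$ points along the $x$-axis the integral curve can stay in $S$.

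The paper's proof fixes exactly this point: using a Whitney stratification, it observes that the intersection of $\ker(\alpha)(X)$ with the limits of tangent spaces of the maximal-dimensional strata is a finite union $U$ of subspaces of dimension at most $2n-2<2n=\dim\ker(\alpha)(X)$, and it chooses $Y(X)\in\ker(\alpha)(X)$ with $Y(X)\notin U$. With that stronger choice, Whitney conditions $A$ and $B$ (a secant argument: if $t_{k}\to 0$ with $\beta(t_{k})\in S$, pass to a subsequence in one stratum; the secant directions from $X$ to $\beta(t_{k})$ converge to the direction of $Y(X)$, which condition $B$ forces to lie in a limit of tangent planes, contradicting $Y(X)\notin U$) show that $\beta(t)\notin S$ for all small $t\neq 0$. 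To repair your proof you should replace ``$Y(X)\notin T_{X}\Sigma$'' by ``$Y(X)$ avoids every limit of tangent planes of the strata of $S$ at $X$'' (possible by the dimension count above) and replace the isolated-zeros appeal by this Whitney-condition argument; the remainder of your construction then goes through as written.
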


\noi The proof of this lemma is completely analogous to the proof of lemma \ref{L6} using the fact that $S$ has a Whitney stratification.

\begin{proof}
\noi Given a Whitney stratification of $S$, by Whitney conditions $A$ and $B$, the intersection of $\ker(\alpha)$ with any limit of tangent spaces of the strata of maximal dimension is a union $U$ of linear subspaces of dimension at most $2n-2$.\\

\noi Then there exist a vector $Y\in\tange_X(\vd)$ such that $Y\notin U$. Therefore there exist a vector field $\cv:U\to \tange(\vd)$ in a neighborhood $U$ of $X$ such that $\cv(X)=Y$. Let $\pi_P:\tange_P(U)\to \ker(\alpha_P)$ be the orthogonal projection for $P\in U$.\\

\noi Consider the vector field in $U$ defined by $\cvd(P)=\pi_P(\cv(P))$. At this point we can proceed exactly as in the last two paragraphs of the proof of lemma \ref{L6} to conclude that there exist a smooth parametrized curve $\gamma:[-1,1]\to\vd$ such that
\begin{itemize}
\item $\gamma(0)=Y$,
\item If $x\in[-1,1]$ with $x\not=0$, $\gamma(x)\notin S$ and $\gamma'(x)\in\big[\ker(\tau)\big]^\perp\big(\gamma(x)\big)=\K_\alpha\big(\gamma(x)\big)$.
\end{itemize}

\noi  Now we consider the curve $\hat{\gamma}:[-1,1]\to\vd$ given by $\hat{\gamma}(t)=\gamma(t^2)$ so that $\hat{\gamma}'(0)=0$. Then $\hat{\gamma}$ has the property that $\hat{\gamma}'(t)$ is in $\big[\ker(\tau)\big]^\perp\big(\gamma(x)\big)$ for all $t\in[-1,1]$ since $\big[\ker(\tau)\big]^\perp\big(\gamma(0)\big)=0$.
\end{proof}

\noi The open set  $V_W=\Mg-W$ is connected since $W$ is of codimension two. This set is the set where the form $\alpha$ is of contact type and it follows by Darboux's theorem that every two points in $V_W$ can be connected by a regular Legendrian curve. \\

\noi Then by proposition \ref{P8} and lemma \ref{L7}  it follows that in the moment-angle manifolds of mixed type $\Mg$  the $1$-form $\alpha$ defines a \emph{conductive confoliation} in the sense of J. S. Altschuler and L. F.  Wu,  therefore $\alpha$ can be approximated, in the $C^\infty$ topology,  by a contact form.  We have the following theorem:

\begin{theo}\label{T4}
Let $m=1$, $n>3$, $s>1$ and let $\Lam=\big(\lambda_1,\ldots, \lambda_n\big)$ be an admissible configuration, with $\lambda_j\in\C$. The manifolds 
$$
\Mg=\underset{j=1}{\overset{2\ell+1}{\sharp}}\left(\s^{2d_j+s-1}\times\s^{2n-2d_j+s-2}\right),$$
where $d_j=n_j+\dots+n_{j+\ell-1}$,  admit contact structures.
\end{theo}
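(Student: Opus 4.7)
The diffeomorphism type portion of the statement is already the content of Theorem \ref{T1} cited from \cite{GL}, so the new work is to exhibit a contact structure on $\Mg$. My plan is to combine the positive confoliation property of the explicit $1$-form $\alpha$ (Proposition \ref{P8}) with the conductivity criterion (Lemma \ref{L7}) and then apply the Altschuler--Wu approximation theorem (Theorem \ref{T2}).

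First I would point out that $\Mg$ is a compact $(2n+2s-3)$-dimensional manifold, which has the right parity $2N+1$ with $N=n+s-2$, so the machinery of positive confoliations in odd dimension applies. By Proposition \ref{P8}, with respect to any fixed Riemannian metric on $\Mg$ and the appropriate orientation, the $1$-form
$$\alpha = i\left[\sum_{r=1}^s\bigl(w_r d\bar w_r - \bar w_r dw_r\bigr) + \sum_{j=1}^n\bigl(z_j d\bar z_j-\bar z_j dz_j\bigr)\right]$$
satisfies $\ast\bigl(\alpha\wedge(d\alpha)^{n+s-2}\bigr)\geq 0$ everywhere, with equality precisely on the set
$$W=\bigl\{X\in\Mg\mid w_1^2+\dots+w_s^2=0\bigr\},$$
which is a real analytic subvariety of real codimension two.

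Next, I would verify that the hypotheses of Lemma \ref{L7} are met: $\Mg$ is compact, $\alpha$ is a positive confoliation, and the degeneracy locus $W$ is real analytic of real codimension exactly two. Lemma \ref{L7} then yields that $\alpha$ is a \emph{conductive confoliation} on $\Mg$, i.e., every point of $W$ can be joined to a nearby contact point by a smooth path tangent to $\bigl[\ker(\tau)\bigr]^\perp$, while the complement $\Mg\setminus W$ is connected (again because $W$ has codimension two), so Darboux's theorem connects contact points among themselves by Legendrian curves.

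Finally I would invoke Theorem \ref{T2} of Altschuler and Wu: a conductive confoliation on a closed odd-dimensional manifold lies in the $C^\infty$-closure of genuine contact forms. Applying this to $\alpha$ on $\Mg$ produces an arbitrarily small $C^\infty$-perturbation $\tilde\alpha$ of $\alpha$ which is a contact form, endowing $\Mg$ with a contact structure. Combined with the explicit diffeomorphism type in Theorem \ref{T1}, this completes the proof. The only real obstacle is conceptual rather than computational: one must be confident that Lemma \ref{L7} indeed applies here, which hinges on the fact that $W$ admits a Whitney stratification (automatic since it is real analytic) so that the construction of a transverse connecting vector field in the proof of Lemma \ref{L7} goes through exactly as in the $s=1$ case of Lemma \ref{L6}; no new geometric input beyond Proposition \ref{P8} is needed.
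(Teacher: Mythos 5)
Your proposal is correct and follows essentially the same route as the paper: the authors also combine Proposition \ref{P8} (positive confoliation with degeneracy locus $W$ of real codimension two) with Lemma \ref{L7} (conductivity via the Whitney stratification of $W$), the connectedness of $\Mg-W$ together with Darboux's theorem, and then the Altschuler--Wu approximation theorem (Theorem \ref{T2}) to perturb $\alpha$ to a contact form, while the diffeomorphism type is quoted from Theorem \ref{T1}. No discrepancy to report.
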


 \noi  C. Meckert in \cite{Meckert} have shown that the connected sum of contact manifolds of the same dimension is a contact manifold. It was pointed to us by D. Pancholi that more generally it follows also from work of Y. Eliashberg and A. Weinstein (\cite{Elias},  \cite{Weins}) that the manifolds $\Mgu$ have a contact structure.\\

\noi Indeed, the manifolds $\Mgu$ are connected sums of products of the form $\s^n\times\s^m$ with $n$ even and $m$ odd, and $n,m>2$. Without loss of generality, we suppose that $m>n$ (the other case is analogous), then $\s^m$ is an open book with binding $\s^{m-2}$ and page $\R^{m-1}$. Hence $\s^n\times\s^m$ is an open book with binding $\s^{m-2}\times\s^n$ and page $\R^{m-1}\times\s^{n}$. The page  $\R^{m-1}\times\s^n$ is parallelizable since it embeds as an open subset of $\R^{m+n-1}$, therefore, since $m+n-1$ is even it has an almost complex structure.\\

\noi  Furthermore, by hypothesis, $2n\leq{n+m}$ hence by a theorem of Y. Eliashberg (see \cite{Elias}) the page is Stein and is the interior of a compact manifold with contact boundary $\s^{m-2}\times\s^n$. Hence by  a theorem of E. Giroux (see \cite{Giroux}) $\s^n\times\s^m$ is a contact manifold. \\

\noi However our construction is in some sense explicit  since it is the instantaneous  di\-ffusion through the heat flow of an explicit 1-form which is a positive confoliation.

\begin{prop}\label{P9}
Let $m>1$, $n>3$ and $n>2m$.  Let $\ast$ denote the Hodge ope\-rator for a given Riemannian metric on a moment-angle manifold of mixed type $\MG$. Then for the appropriate orientation of $\MG$  $\alpha$ is a positive confoliation:\begin{enumerate}
\item for $X=\big(w_1,\dots,w_m,z_1,\dots,z_n\big)\in\MG$ such that $w_k\not=0$ for all $k\in\{1,\dots,m\}$,
$$
\ast\Big(\alpha\wedge (d\alpha)^{n-1}\Big)(X)>0,
$$
\item for $X\in W_\ell$ with $\ell\in\{1,\dots,m\}$,
$$
\ast\Big(\alpha\wedge (d\alpha)^{n-1}\Big)(X)=0.
$$
\item The set of points $X=\big(w_1,\dots,w_m,z_1,\dots,z_n\big)$ in $\MG$ such that  
$$
\ast\Big(\alpha\wedge (d\alpha)^{n-1}\Big)(X)=0,
$$
is the real analytic set of real codimension two in $\MG$ given by:
$$
\Sigma=\biggr\{\big(w_1,\dots,w_m,z_1,\dots,z_n\big)\in\MG\;\Big|\; w_1\dots w_m=0\biggl\}.
$$
\end{enumerate}
\end{prop}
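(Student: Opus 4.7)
The strategy is to exploit that $\alpha\wedge(d\alpha)^{n-1}$ is a top form on the $(2n-1)$-dimensional manifold $\MG$, combined with the explicit description of $\bigl[\K_\alpha(X)\cap\ker(d\alpha_X)\bigr]$ furnished by Proposition \ref{P6}. Since $\K_\alpha(X)$ has real dimension $2n-2$ by Lemma \ref{L5}, the value $\alpha\wedge(d\alpha)^{n-1}(X)$ is nonzero if and only if $d\alpha_X$ is nondegenerate when restricted to $\K_\alpha(X)$, i.e.\ if and only if $\K_\alpha(X)\cap\ker(d\alpha_X)=\{0\}$. The computation carried out after Proposition \ref{P6} shows that this intersection is trivial exactly when every $w_k\neq 0$, and coincides with $\Pi_\ell(X)$ of positive dimension $2\ell$ when $X\in W_\ell$ with $\ell\geq 1$. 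This already yields the vanishing claim in (2) and identifies the zero locus set-theoretically as $\Sigma=\{w_1\cdots w_m=0\}\cap\MG$.

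Next I would verify that $\Sigma$ is a real analytic subvariety of real codimension two in $\MG$, hence does not disconnect $\MG$. Each piece $\{w_k=0\}\cap\MG$ is cut out on $\MG$ by the single complex holomorphic function $w_k$; on this locus the equation $\pmb{F_k}(X)=0$ degenerates to the pure-quadric equation $F_k(Z)=0$, and the admissibility of the sub-configuration of $\Lam$ obtained by deleting the $k$-th component (which is part of the hypothesis of Proposition \ref{P5}) guarantees that $dw_k$ is linearly independent of the defining differentials $d\pmb{F_1},\dots,d\pmb{F_m},d\pmb\rho$ of $\MG$. Hence $\{w_k=0\}\cap\MG$ is a real codimension-two submanifold, and the union $\Sigma$ is a real analytic set of real codimension two; in particular $U:=\MG\setminus\Sigma$ is connected.

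To establish the positivity assertion (1), I would use that $\alpha\wedge(d\alpha)^{n-1}$ is a continuous top form which is nowhere vanishing on the connected open set $U$; hence it has constant sign on $U$ with respect to any fixed orientation of $\MG$. Picking the orientation so that this constant sign is positive yields (1), and the triple (1)--(3) is then complete.

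The main technical obstacle I anticipate is not the linear algebra (which is taken care of by Proposition \ref{P6}), but the precise transversality and stratification argument needed to guarantee that each $\{w_k=0\}\cap\MG$ is a smooth codimension-two submanifold and, more importantly, that the deeper strata $W_\ell$ with $\ell\geq 2$ still have codimension at least two in $\MG$, so that $U$ is connected. This is exactly what the admissibility hypothesis built into Proposition \ref{P5} is designed to provide; once this point is secured, the rest of the proof reduces to a continuity argument combined with the kernel computation already performed.
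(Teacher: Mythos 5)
Your argument is correct, and it is worth noting that the paper itself contains no proof of Proposition \ref{P9}: it is stated without proof, by analogy with Proposition \ref{P7}, which is imported from \cite{BLV} for the case $m=1$, $s=1$, where positivity is obtained by an explicit computation of $\alpha\wedge(d\alpha)^{n-1}$. Your route is softer and genuinely different: you reduce everything to the pointwise linear algebra already carried out after Proposition \ref{P6} (the intersection $\K_\alpha(X)\cap\ker(d\alpha_X)$ is trivial exactly when all $w_k\neq0$, and equals $\Pi_\ell(X)$ of dimension $2\ell$ on $W_\ell$), combine it with the standard fact that for a nowhere-zero $1$-form on a $(2n-1)$-manifold the top form $\alpha\wedge(d\alpha)^{n-1}$ is nonzero at $X$ precisely when $\K_\alpha(X)\cap\ker(d\alpha_X)=\{0\}$ (one direction is the contraction argument; for the other, a nonzero vector of $\ker(d\alpha_X)$ on which $\alpha$ does not vanish rules out any degeneracy of $d\alpha_X|_{\K_\alpha(X)}$), and then fix the sign by continuity on the complement of $\Sigma$. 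What your approach buys is brevity and coordinate-independence; what the explicit \cite{BLV}-style computation buys is positivity with no connectedness discussion at all. Two refinements: first, the transversality of each $\{w_k=0\}$ with $\MG$ is automatic, because at a point with $w_k=0$ none of $d\pmb{F_1},\dots,d\pmb{F_m},d\pmb{\rho}$ involves $dw_k$ or $d\bar{w}_k$; the admissibility hypothesis (2) of Proposition \ref{P5} (which, note, concerns the sub-configuration formed by the components indexed by the \emph{vanishing} coordinates, not the one obtained by deleting the $k$-th component) is only needed to make $\MG$ smooth along these loci, so the ``main technical obstacle'' you anticipate about deeper strata is moot: $\Sigma$ is a finite union of $m$ smooth codimension-two submanifolds of $\MG$, hence a real analytic set of codimension two. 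Second, your sign argument tacitly requires $\MG\setminus\Sigma$ to be connected inside each connected component of $\MG$, which is exactly what codimension two provides; since an orientation can be chosen componentwise, connectedness of $\MG$ itself is not needed, and claim (1) follows as you say.
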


\noi  Let denote by $\tau$ the $2$-form  $\ast\Big(\alpha\wedge(d\alpha)^{n-2}\Big)$.\\

\noi To proof that $\alpha$ is a conductive confoliation when $m>1$, $n>3$ and  $n>2m$, we use the following lemma:

\begin{lema}\label{L8}
Let $\vd$ be a compact $(2n+1)$-dimensional manifold. Let $\alpha$ is a positive confoliation, i.e., $\ast\Big(\alpha\wedge (d\alpha)^n\Big)(X)\geq0$ , where $\ast$ is the Hodge star operator for a given Riemannian metric. Suppose that the set 
$$
S=\biggr\{X\in\vd\;\Big|\; \ast\Big(\alpha\wedge (d\alpha)^n\Big)(X)=0\biggl\},
$$ 
is real analytic set of real codimension at least two, then $\alpha$ is a conductive confoliation.
\end{lema}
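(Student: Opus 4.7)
The plan is to reproduce verbatim the strategy used for Lemma \ref{L7}, since the two statements are formally identical; the only structural fact needed is that a real analytic subset admits a Whitney stratification, which is automatic here. First I would fix a Whitney stratification of $S$ and work at an arbitrary point $X\in S$. By Whitney condition (a), any limiting tangent space of the top-dimensional strata of $S$ is a linear subspace of dimension at most $2n-1$. Intersecting finitely many such limits with $\ker(\alpha_X)$ (of dimension $2n$, since $\alpha$ is nontrivial as a positive confoliation) yields a set $U\subset \ker(\alpha_X)$ which is a finite union of linear subspaces of dimension at most $2n-2$. Because $U$ is a proper subset of a $2n$-dimensional vector space, I can pick a vector $Y\in\ker(\alpha_X)$ with $Y\notin U$.

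Next I would extend $Y$ to a smooth vector field $\cv$ defined on a neighborhood $\U$ of $X$ in $\vd$ (for instance by a constant extension in a chart followed by a cutoff), and then orthogonally project to $\ker(\alpha)$: set $\cvd(P):=\pi_P\big(\cv(P)\big)$, where $\pi_P:\tange_P(\vd)\to\ker(\alpha_P)$ is the orthogonal projection associated to the fixed Riemannian metric. By continuity $\cvd(X)=Y\notin U$, so by Whitney condition (b) the integral curve $\gamma$ of $\cvd$ through $X$ escapes $S$ for small $t\neq 0$. On $\vd\setminus S$ the form $\alpha$ is contact, hence $\big[\ker(\tau)\big]^{\perp}(P)=\ker(\alpha_P)$ at such points, and therefore $\gamma'(t)\in\big[\ker(\tau)\big]^{\perp}(\gamma(t))$ for all sufficiently small $t\neq 0$.

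To handle the endpoint $t=0$, where $\big[\ker(\tau)\big]^{\perp}(X)$ can collapse to $\{0\}$, I would reparametrize by $\hat{\gamma}(t):=\gamma(t^2)$. Then $\hat{\gamma}'(0)=0$ trivially lies in $\big[\ker(\tau)\big]^{\perp}(X)$, while for $t\neq 0$ one has $\hat{\gamma}'(t)=2t\,\gamma'(t^2)$, a scalar multiple of a vector already shown to lie in $\big[\ker(\tau)\big]^{\perp}(\gamma(t^2))$. Together with the fact that $\vd\setminus S$ is connected (codimension $\geq 2$) and is the locus where $\alpha$ is contact, so that any two of its points are joined by a regular Legendrian curve by Darboux's theorem, this verifies the accessibility condition in Definition \ref{D8} and concludes that $\alpha$ is a conductive confoliation.

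The main obstacle is the degeneration of $\big[\ker(\tau)\big]^{\perp}$ at points of $S$: the dimension drops discontinuously, so one cannot hope to find a curve whose velocity at $X$ lies in a genuinely transverse direction. The reparametrization $t\mapsto t^2$ is the decisive trick, and it only succeeds because the Whitney stratification, combined with the codimension-two hypothesis, produces a direction $Y$ off of the limit union $U$; without codimension $\geq 2$ the set $U$ could fill $\ker(\alpha_X)$ and the construction would break down.
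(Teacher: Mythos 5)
Your proposal follows essentially the same route as the paper, which proves this lemma by simply repeating the argument of Lemma \ref{L7}: take a Whitney stratification of $S$, pick a vector $Y$ in $\ker(\alpha_X)$ avoiding the union $U$ of intersections of $\ker(\alpha)$ with limiting tangent planes of the top strata, extend and orthogonally project to $\ker(\alpha)$, flow to get a curve leaving $S$ for $t\neq 0$, reparametrize by $\hat{\gamma}(t)=\gamma(t^2)$ to handle the collapse of $\big[\ker(\tau)\big]^\perp$ at $X$, and finish with connectedness of $\vd - S$ plus Darboux. The only cosmetic deviation is your claim that $U$ is a \emph{finite} union of subspaces, which is neither asserted in the paper nor needed for the argument.
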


\noi The proof of this lemma is completely analogous to the proof of lemma \ref{L6} and lemma \ref{L7}, using the fact that $S$ has a Whitney stratification.\\

%
%
%

\noi The open set  $V_\Sigma=\MG-\Sigma$ is connected since $\Sigma$ is of codimension two. This set is the set where the form $\alpha$ is of contact type and it follows by Darboux's theorem that every two points in $V_\Sigma$ can be connected by a regular Legendrian curve. \\

\noi Hence proposition \ref{P9}  and lemma \ref{L8} imply that in the moment-angle manifold of mixed type $\MG$ the $1$-form $\alpha$ defines a \emph{conductive confoliation} in the sense of J. S. Altschuler and L. F.  Wu
and therefore $\alpha$ can be approximated in the $C^{^\infty}$ topology by a contact form.\\

\noi  We summarize all in the following result:

\begin{theo}
Let $m>1$, $n>3$ such that $n>2m$ and let $\Lam=\big(\lam_1,\dots,\lam_n\big)$ be an admissible configuration in $\C^m$. The moment-angle manifolds of mixed type $\MG$ associated to $\Lam$,  are contact manifolds. The distribution $\K_\alpha$ can be perturbed by an arbitrarily small perturbation in the $C^\infty$ topology to a distribution 
$\K_{\alpha'}$ which defines a contact structure on $\MG$. 
\end{theo}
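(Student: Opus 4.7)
The plan is to assemble the three structural ingredients already developed in the preceding pages and apply the Altschuler--Wu theorem (Theorem \ref{T2}) to conclude. The main work has been front-loaded into Propositions \ref{P6} and \ref{P9} together with Lemma \ref{L8}; the final theorem is essentially a packaging of these results.

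First, I would invoke Lemma \ref{L5} and Proposition \ref{P9} to record that $\alpha$ is a nontrivial $1$-form on $\MG$ which is a positive confoliation, in the sense that $\ast\bigl(\alpha\wedge(d\alpha)^{n-1}\bigr)(X)\geq 0$ for all $X\in\MG$, with equality exactly on the real analytic set
$$
\Sigma=\bigl\{(w_1,\dots,w_m,z_1,\dots,z_n)\in\MG\;\big|\;w_1\cdots w_m=0\bigr\},
$$
which has real codimension two in $\MG$. Proposition \ref{P6} then identifies, at each $X\in\Sigma$ where exactly $\ell$ of the coordinates $w_k$ vanish, the characteristic distribution $\bigl[\K_\alpha(X)\cap\ker(d\alpha_X)\bigr]=\Pi_\ell(X)$ of dimension $2\ell$, so the failure of the contact condition along $\Sigma$ is controlled exactly as in the proofs of Theorems \ref{T3} and \ref{T4}.

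Next, I would apply Lemma \ref{L8} to $\vd=\MG$, $\alpha$ as above, and $S=\Sigma$. The hypothesis of positivity is given by Proposition \ref{P9}(1)--(2), and the hypothesis that $\Sigma$ is a real analytic set of real codimension at least two is given by Proposition \ref{P9}(3). Lemma \ref{L8} then concludes that $\alpha$ is a conductive confoliation on $\MG$ in the sense of Definition \ref{D8}: every point of $\Sigma$ is accessible from a point of $\MG-\Sigma$ (where $\alpha$ is contact) by a smooth path whose tangent lies in $\bigl[\ker(\tau)\bigr]^\perp$; meanwhile, on the connected open set $\MG-\Sigma$, Darboux's theorem produces regular Legendrian paths between any two points. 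Finally, Theorem \ref{T2} of Altschuler--Wu yields a form $\alpha'$, arbitrarily $C^\infty$-close to $\alpha$, which is a genuine contact form on $\MG$; setting $\K_{\alpha'}=\ker(\alpha')$ gives the required contact distribution, and its closeness to $\K_\alpha$ in the $C^\infty$ topology follows from the continuity of the kernel as a function of the coefficients of $\alpha'$.

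The step that deserves the most attention is the verification that the hypotheses of Lemma \ref{L8} genuinely hold; in particular, that $\Sigma$ really is a real analytic subvariety of codimension two and that the Whitney stratification argument in the proof of Lemma \ref{L8} adapts to the present setting. Codimension two follows immediately from the fact that $\Sigma$ is the zero locus in $\MG$ of the single holomorphic function $w_1\cdots w_m$ (whose vanishing is a nondegenerate condition on $\MG$ thanks to admissibility of $\Lam$ and Proposition \ref{P5}), and the stratification is given by the strata $W_\ell$ of points with exactly $\ell$ vanishing coordinates $w_k$, which have codimension $2\ell$ in $\MG$. Once this is verified, the remaining steps are direct applications of previously established results and require no new calculation.
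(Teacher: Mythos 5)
Your proposal follows essentially the same route as the paper: it assembles Lemma \ref{L5}, Propositions \ref{P6} and \ref{P9}, applies Lemma \ref{L8} to the codimension-two analytic set $\Sigma$ (stratified by the $W_\ell$) to get a conductive confoliation, uses connectedness of $\MG-\Sigma$ plus Darboux for Legendrian accessibility, and concludes via the Altschuler--Wu Theorem \ref{T2}. This is exactly the paper's argument, so the proposal is correct.
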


\section{Concluding Remarks.}\label{S4}

\noi Recall, for $m>1$ and $n>3$  and $n>2m$ the submanifold $\MGo$ in $\C^{m+n}-\big\{(0,\dots,0)\big\}$, which was defined before by the equation  \begin{equation}\label{E22}
\pmb{w}^2+\sum_{j=1}^n\lam_j|z_j|^2=0 
\end{equation}
\noi where $\pmb{w}^2=\big(w_1^2,\dots,w_m^2\big)\in\C^m$  with $w_k\in\C$ ($1\leq k \leq{m}$),  $\lam_j=\big(\lambda^1_{_j},\dots,\lambda^m_{_j}\big)\in\C^m$ for all $j\in\{1,\dots,n\}$ and $\Lam=\big(\lam_1,\dots,\lam_n\big)$ is an admissible configuration.\\

\noi $\MGo$ is indeed a manifold of dimension $2n$ since the only singularity of the variety given by \eqref{E22} is the origin. If 
$\big(w_1,\dots,w_m,z_1,\dots,z_n\big)$ satisfies \eqref{E22} then for every real number $t$ the point $\big(tw_1,\dots, tw_m,tz_1,\dots,tz_n\big)$ also satisfies \eqref{E22} therefore
if we add the origin we obtain a real cone with vertex the origin and we have $\MG=\MGo\cap\s^{2n+2m-1}$.\\ 

\noi The following proposition shows that in fact there is a relation between classical and moment-angle manifolds of mixed type, when $m>1$.

\begin{prop}
Let $m>1$, $n>3$ and $n>2m$.    
Let $\hat{p}:\MGo\to\C^n-\big\{(0,\dots,0)\big\}$ be the map given by 
$$
\hat{p}\left(w_1,\dots,w_m,z_1,\dots,z_n\right)=\left(z_1,\dots,z_n\right).
$$

\noi Then $\hat{p}$ is a differentiable branched covering of order $2^m$. It is  singular precisely in the set 
$\tilde\Sigma=S\cap{\MGo}$ where 
$S=\left\{\left(w_1, \dots, w_m, z_1,\dots,z_m\right) \; |\;  w_1w_2\dots {w_m}=0\right\}.$
\noi Furthermore, $\hat{p}$ induces a differentiable branched covering 
$$
p:\MG\to\s^{2n-1}$$ 
\noi of order $2^m$ which is singular at $\Sigma=S\cap{\MG}$. 
 \end{prop}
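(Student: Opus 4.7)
The plan is to recognize $\hat p$ as the quotient map for the action of $\Gamma=(\Z/2\Z)^m$ on $\MGo$ by sign changes on the $w$-coordinates, $(\epsilon_1,\dots,\epsilon_m)\cdot(w,z)=(\epsilon_1 w_1,\dots,\epsilon_m w_m, z)$. The defining equations \eqref{E22} involve only $w_k^2$ and $|z_j|^2$, so $\Gamma$ preserves $\MGo$; two points share image under $\hat p$ iff they share $z$-coordinates, while for fixed $z\in\C^n-\{0\}$ the preimage $\hat p^{-1}(z)$ is precisely the set of $(w,z)$ with $w_k^2=-F_k(Z)$ for all $k$, which is a single $\Gamma$-orbit of cardinality $2^{m-|K(z)|}$ for $K(z)=\{k : F_k(Z)=0\}$. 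Thus $\hat p$ identifies $\MGo/\Gamma$ with $\C^n-\{0\}$.

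For the branched covering structure and the critical locus, I will work near a point $X_0\in\tilde\Sigma$ with $w_k^0=0$ for $k\in K$ and $w_k^0\neq 0$ otherwise. The implicit function theorem applied to the equations $w_k^2+F_k(Z)=0$ for $k\notin K$ (in which $\partial/\partial w_k$ is the invertible factor $2w_k^0$) yields a local chart on $\MGo$ around $X_0$ of the form $(Z,(w_k)_{k\in K})\in\C^n\times\C^{|K|}$ subject to the residual equations $w_k^2=-F_k(Z)$ for $k\in K$, and in this chart $\hat p$ is projection to $Z$. The admissibility condition (2) of proposition \ref{P5} applied to the sub-configuration $\Lam'=(\lambda_j^r)_{j,\,r\in K}$ forces $\{dF_k(Z_0)\}_{k\in K}$ to be linearly independent, so I may further change coordinates on the $Z$-factor taking $u_k:=F_k(Z)$ as $|K|$ of the coordinates. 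In these final coordinates $\hat p$ is the product of $|K|$ copies of the standard branched double cover $w\mapsto w^2$ with the identity on the remaining factor, which shows that $\hat p$ is a smooth branched covering of degree $2^m$ whose critical set is exactly $\tilde\Sigma$.

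Finally, $\hat p$ is equivariant with respect to the positive scaling $t\cdot(w,z)=(tw,tz)$, which preserves $\MGo$ by the homogeneity of \eqref{E22}. Under the radial projections $(w,z)\mapsto(w,z)/\|(w,z)\|$ and $z\mapsto z/|z|$, which identify $\MGo\cong\MG\times\R_+$ and $\C^n-\{0\}\cong\s^{2n-1}\times\R_+$, the map $\hat p$ descends to $p(w,z):=z/|z|$; note that on $\MG$ the coordinate $z$ cannot vanish, because $z=0$ together with \eqref{E22} forces all $w_k=0$ and contradicts $\|(w,z)\|=1$. Since $\Gamma$ commutes with scaling, the analysis above descends verbatim and shows that $p$ is a smooth $2^m$-fold branched covering with critical set $\Sigma=\tilde\Sigma\cap\MG=S\cap\MG$. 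The main obstacle is the local analysis at $\tilde\Sigma$, specifically verifying mutual transversality of $\{w_k^2+F_k(Z)=0\}_{k\in K}$ along $\MGo$ near $X_0$: this is precisely where the admissibility of sub-configurations from proposition \ref{P5} is essential, as it guarantees the differentials $dF_k(Z_0)$ for $k\in K$ are linearly independent so that the branching factors as a transverse product of square maps rather than developing a higher-order singularity.
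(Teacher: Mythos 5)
Your proof is correct, and its skeleton matches the paper's: you count the fibre over $Z$ by solving $w_k^2=-F_k(Z)$ (the paper's surjectivity plus cardinality-$2^{\ell}$ argument, which you phrase as saying $\hat p$ is the quotient by the sign group $(\Z/2\Z)^m$ acting on the $w$-coordinates --- a viewpoint the paper itself records right after the proposition as ``another way to describe this branched covering''), you check that $\hat p$ is a local diffeomorphism off $\tilde\Sigma$, and you descend to $p$ on $\MG$ using the cone structure, your identification $\MGo\cong\MG\times\R_+$ being exactly the paper's observation that $\hat p$ maps rays bijectively to rays and $p(X)=|\hat p(X)|^{-1}\hat p(X)$. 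Where you genuinely go beyond the paper is at the branch locus: the paper only invokes a ``direct calculation'' for the local diffeomorphism where $w_1\cdots w_m\neq 0$ and then counts fibres, whereas you eliminate the nonvanishing $w_k$ by the implicit function theorem and use admissibility of the sub-configuration (hypothesis (2) of proposition \ref{P5}, via proposition \ref{P4}) to take $u_k=F_k(Z)$ as coordinates, exhibiting $\hat p$ locally as a product of complex squarings with an identity factor; this gives the local degree $2^{|K|}$, shows the critical set is exactly $\tilde\Sigma$, and makes explicit where hypothesis (2) of proposition \ref{P5} actually enters, which the paper leaves implicit. One small caution on wording: the ``linear independence of $\{dF_k(Z_0)\}_{k\in K}$'' you need is real-linear independence of the differentials of $\Re F_k$ and $\Im F_k$ (equivalently, regularity of $\{F_k=0,\ k\in K\}$ at $Z_0\neq 0$), which is precisely what sub-admissibility provides; complex-linear independence alone would not suffice, since the $F_k$ are not holomorphic.
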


\begin{proof} Clearly  $\hat{p}$ is surjective since we can choose $\big(z_1,\dots,z_n\big)\neq0$ arbitrarily and then
find $\big(w_1,\dots,w_m\big)$ from \eqref{E22} so that $\big(w_1,\dots,w_m,z_1,\dots,z_n\big)\in\MGo$. Direct calculation shows
that if a point $X=\big(w_1,\dots,w_m,z_1,\dots,z_n\big)\in\MGo$ is such that $w_1w_2\dots{w_m}\neq0$ then $\hat{p}$ is a local diffeomorphism in a neighborhood of $X$. \\

\noi If $\hat{p}^{-1}\big(z_1,\dots,z_n\big)$ contains a point $X=\big(w_1,\dots,w_m,z_1,\dots,z_n\big)$ such that 
$w_j\neq0$ for exactly $\ell$ indices $j$ then the cardinality of $\hat{p}^{-1}\big(z_1,\dots,z_n\big)$ is $2^{\ell}$. \\

\noi Let us us recall that $\MGo\subset\C^{m+n}$ is a deleted real cone with vertex the origin of 
$\C^{n+m}$. For a point $X=\big(w_1,\dots,w_m,z_1,\dots,z_n\big)\in\MGo$ consider the ray
$$
R(X)=\biggr\{ t\big(w_1,\dots,w_m ,z_1, \dots,z_n\big)\;\big|\; t>0\biggl\}.
$$ 

\noi Then $\hat{p}$  sends bijectively $R(X)$ onto the ray through $\hat{p}(X)$:
$$
R\big(\hat{p}(X)\big)=\biggr\{t\big(z_1,\dots,z_n\big)\;\big|\;t>0\biggl\},
$$
\noi then the map :
$$
p(X)=|\hat{p}(X)|^{-1}\hat{p}(X), \quad\text{for} \quad X\in\MGo 
$$
\noi is the required  branched covering of order $2^m$.\\
\end{proof}

\noi There is another way to describe this branched covering. Let $G=\Z_2=\{1,-1\}$ be the multiplicative group with two elements. There exists a natural action of $G^m$ on $\MG$ as follows:
$$
T_{\left(\sigma_1,\dots,\sigma_m\right)}\left(w_1,\dots,w_m,z_1,\dots,z_n\right)=\left(\sigma_1{w_1},\dots,\sigma_m{w_m},z_1,\dots,z_n\right), 
$$
\noi where $\left(\sigma_1,\dots,\sigma_m\right)\in{G}^m$.\\

\noi It follows from equation \eqref{E22} that the set of fixed point of this action is $\MLamU$  (identified as the set of points in $\MG$ such that $w_j=0 $ for $j\in\{1,\dots,m\}$).\\

\noi The orbit space of the action of $G$ on $\MG$ is the sphere $\s^{2n-1}$. The subsets with isotropy group different from the identity are the submanifolds $M_K\subset\MG$ :
$$
M_K:=\left\{\left(w_1,\dots,w_m,z_1,\dots, z_n\right)\in\MG\;|\;    F_k\left(z_1,\dots,z_n\right)=0,\quad k\in K\right\},
$$
\noi where $\emptyset\neq{K}\subset\{1,\dots,m\}$ and the functions $F_k$ are the functions appearing in equation \eqref{E16} in proposition \ref{P4} which were used to define $\MG$. These manifolds are all moment-angle submanifolds of $\MG$. \\

\noi There is an action of the $n$-torus ${\mathbb T}^n=\s^1\times\dots\times\s^1$  on $\MG$ as follows:
$$
T_{\mathbf{u}}\left(w_1,\dots,w_m,z_1,\dots,z_n\right)=\left(w_1,\dots,w_m, u_1z_1,\dots,u_nz_n\right), \,\,\mathbf{u=}\left(u_1,\dots,u_n\right)\in\T^n.
$$

\noi Let us describe the orbit space of this action of $\T^n$ and the corresponding equivalent of the moment map. Define
$$
\frak{m}:\MG\to\C^m
$$

\noi by the formula

$$
\frak{m}\left(w_1,\dots,w_m,z_1\dots,z_n\right)=(w_1,\dots,w_m)
$$

\noi Define  $\frak{M}:\MG\to\C^m\times{\R}_+^n$ by the formula 
$$
\frak{M}\left(w_1,\dots,w_m,z_1\dots,z_n\right)=\left(w_1,\dots,w_m,|z_1|^2, \dots, |z_n|^2\right)
$$

\noi Let us recall that $m>1$ and $n>3$ and $2m<n$. If $\left(w_1,\dots,w_m, z_1,\dots, z_n\right)\in\MG$ we must have that 
$|z_1|^2+\dots+|z_n|^2\neq0$ since otherwise $w_r=0$ for all $r\in\{1,\dots,m\}$. Since $\MG$ is compact
there exists $0<c<1$ such that 
$$
c=\inf\left\{|z_1|^2+\dots+|z_n|^2\,|\,\left(w_1,\dots,w_m,z_1\dots,z_n\right)\in\MG\right\}.
$$

\noi It follows immediately from equations defining $\MG$ that :

$$
{P(\Lam)}:=\frak{m}\left(\MG\right)\subset\ev\left(c\lam_1,\dots,c\lam_n\right),
$$
\noi where $\Lam=\left(\lam_1,\dots,\lam_n\right)$ is the admissible configuration associated to $\MG$ and $\ev\left(c\lam_1,\dots,c\lam_n\right)$ is the convex hull in $\C^m$
of the set $\left\{c\lam_1,\dots,c\lam_n\right\}$,  so that  $\ev\left(c\lam_1,\dots,c\lam_n\right)= c\ev\left(\lam_1,\dots,\lam_n\right)$. \\

\noi Since by hypothesis $\Lam$ is admissible it follows that $c\ev\left(\lam_1,\dots,\lam_n\right)$ is 
a full polytope of dimension $2m$ with $n$ vertices. \\

\noi Its Gale transform $\hat{P}(\Lam)$ is a convex polytope
of dimension $n-2m-1$:
$$
\hat{P}(\Lam)=\left\{\left(t_1,\dots,t_n\right)\in\R^n\,|\, t_j\geq0 \,\text{for}\,\, j\in\{1,\dots,n\},\, \sum_{j=1}^nt_jc\lam_j=0,\,\,\sum_{j=1}^nt_j=1\right\}.
$$

\noi It is immediate to verify that if $\mathbf{w}\in P(\Lam)$ then $r\mathbf{w}\in P(\Lam)$ for all $r\in [0,1]$. Also there exists $\epsilon>0$ such that $\mathbf{w}\in P(\Lam)$ if $|\mathbf{w}|<\epsilon$. Hence we have:

\begin{prop}
$P(\Lam)$ is star-shaped with respect to the origin and with nonempty interior.
\end{prop}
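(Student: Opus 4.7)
The plan is to verify the two assertions separately: star-shapedness with respect to $0\in\C^m$, and nonempty interior. Both facts were announced as ``immediate verifications'' in the paragraph preceding the proposition, so the job is to spell out the arguments cleanly.

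First I would show that $0\in P(\Lam)$ and that $P(\Lam)$ is star-shaped at $0$. Fix any $\mathbf{w}=(w_1,\dots,w_m)\in P(\Lam)$, coming from a point $X=(w_1,\dots,w_m,z_1,\dots,z_n)\in\MG$, and pick $r\in[0,1]$. The task is to produce $(z'_1,\dots,z'_n)$ with $(rw_1,\dots,rw_m,z'_1,\dots,z'_n)\in\MG$. Because $\Lam$ is admissible, the Siegel condition furnishes $t_1,\dots,t_n\geqslant 0$ with $\sum_j t_j=1$ and $\sum_j t_j\lam_j=0$. Define $|z'_j|^2:=r^2|z_j|^2+(1-r^2)t_j$, which is nonnegative, and choose any arguments for the $z'_j$. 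Then $\sum_k r^2|w_k|^2+\sum_j|z'_j|^2=r^2+(1-r^2)=1$ and, for each $k$,
$$
(rw_k)^2+\sum_{j=1}^n\lambda^k_{_j}|z'_j|^2=r^2\Bigl(w_k^2+\sum_j\lambda^k_{_j}|z_j|^2\Bigr)+(1-r^2)\sum_j t_j\lambda^k_{_j}=0.
$$
Hence $(rw_1,\dots,rw_m,z'_1,\dots,z'_n)\in\MG$, so $r\mathbf{w}\in P(\Lam)$. Taking $\mathbf{w}\in P(\Lam)$ arbitrary and $r=0$ also shows $0\in P(\Lam)$.

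For the nonempty interior, I would apply the implicit function theorem to the map $\Phi:\C^{n}\to\R^{2m+1}$ whose components are $\Re F_k$, $\Im F_k$ ($k=1,\dots,m$) and $\rho-1$. Pick any point $Z_0=(z^0_1,\dots,z^0_n)\in\MLamU$ (which is nonempty by Siegel condition and Lemma \ref{L1}); at $Z_0$ the differential $d\Phi$ has maximal rank $2m+1$, by the weak hyperbolicity condition (this is exactly the transversality recorded after equation \eqref{E3}). Therefore the nonlinear system
$$
w_k^2+F_k(Z)=0\quad(k=1,\dots,m),\qquad \rho(Z)=1-\sum_k|w_k|^2
$$
can be solved in $Z$ as a smooth function of $\mathbf{w}$ in a neighborhood of $\mathbf{w}=0$, yielding $Z(\mathbf{w})$ with $Z(0)=Z_0$ and $(w_1,\dots,w_m,Z(\mathbf{w}))\in\MG$. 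This exhibits an open ball about $0$ contained in $P(\Lam)$, proving nonempty interior.

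The only subtle point, and therefore the step I would verify most carefully, is the implicit function argument: one must check that the $(2m+1)$ real gradients of $\Re F_k,\Im F_k,\rho-1$ at $Z_0$ remain linearly independent even after perturbing the system by $-w_k^2$ on the $k$-th equation, which is automatic since the added terms do not depend on $Z$ and so do not alter the Jacobian in the $Z$-variables. Everything else in the proof is essentially the algebraic identity displayed in the star-shaped argument, so I would expect no further obstacles. \qed
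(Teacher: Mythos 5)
Your proposal is correct and follows exactly the route the paper itself sketches: the paper only asserts, just before the proposition, that $r\mathbf{w}\in P(\Lam)$ for $r\in[0,1]$ and that a small ball about $0$ lies in $P(\Lam)$, leaving both as immediate, and your redistribution of norm via the Siegel coefficients $t_j$ plus the implicit-function (submersion) argument at a point of $\MLamU$ supplies precisely the missing verifications. No gaps; the only thing worth noting is that for the interior claim surjectivity of the differential already makes $\Phi$ open near $Z_0$, so even the smooth dependence $Z(\mathbf{w})$ is more than is strictly needed.
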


\noi For $\mathbf{w}\in{P(\Lam)}$, consider the convex polytope of dimension $n-2m-1$:

$$
{P}_{\mathbf{w}}(\Lam)=\left\{\left(t_1,\dots,t_n\right)\in\R_+^n\,\,\,\big|\,\, \sum_{j=1}^nt_j\lam_j=\mathbf{w^2},\,\,\sum_{j=1}^nt_j=1-\sum_{k=1}^m|w_k|^2\right\}.
$$
\noi \\

\noi Then  $\frak{M}\left(\MG\right)$, the image of $\MG$ under $\frak{M}$, is given by:

$$
\mathbf{P}(\Lam)=\left\{(\mathbf{w},\mathbf{t}) \,\,|\,\,\mathbf{w}\in{P(\Lam)}=\frak{m}\left(\MG\right),\,\mathbf{t}=\left(t_1,\dots,t_n\right)\in{P}_{\mathbf{w}}(\Lam)\right\}.
$$
\noi A point in $\mathbf{P}(\Lam)$ determines a unique orbit of the torus action and to any orbit corresponds in the natural way a point in $\mathbf{P}(\Lam)$. Hence

\begin{prop} $\mathbf{P}(\Lam)$ is the orbit space of the action of $\T^n$ on $\MG$.
\end{prop}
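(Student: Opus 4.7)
The strategy is to show that $\mathfrak{M}$ descends to a bijection between the orbit space $\MG/\mathbb{T}^n$ and the set $\mathbf{P}(\Lambda)$. Since the $\mathbb{T}^n$-action only multiplies the $z_j$-coordinates by unit complex numbers and fixes the $w_k$, the map $\mathfrak{M}(w_1,\dots,w_m,z_1,\dots,z_n)=(w_1,\dots,w_m,|z_1|^2,\dots,|z_n|^2)$ is manifestly $\mathbb{T}^n$-invariant. Hence $\mathfrak{M}$ factors as $\bar{\mathfrak{M}}:\MG/\mathbb{T}^n\to \C^m\times\R^n$, and the goal is to identify its image with $\mathbf{P}(\Lambda)$ and to check that $\bar{\mathfrak{M}}$ is injective.

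First I would verify that the image is contained in $\mathbf{P}(\Lambda)$. Given $X=(w_1,\dots,w_m,z_1,\dots,z_n)\in\MG$, set $\mathbf{w}=(w_1,\dots,w_m)$ and $t_j=|z_j|^2\geqslant 0$. The defining equations \eqref{E15} give $\sum_{j=1}^n t_j\lam_j=\mathbf{w}^2$ and the sphere condition gives $\sum_{j=1}^n t_j=1-\sum_{k=1}^m|w_k|^2$. In particular $\mathbf{w}\in\mathfrak{m}(\MG)=P(\Lam)$ and $\mathbf{t}\in P_{\mathbf{w}}(\Lam)$, so $(\mathbf{w},\mathbf{t})\in\mathbf{P}(\Lam)$.

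For surjectivity I would reverse the construction: given $(\mathbf{w},\mathbf{t})\in\mathbf{P}(\Lam)$ with $\mathbf{t}=(t_1,\dots,t_n)$, set $z_j=\sqrt{t_j}\in\R_{\geqslant 0}$. The two defining identities of $\mathbf{P}(\Lam)$ translate directly into the equations $\pmb{F_k}(X)=0$ and $\pmb{\rho}(X)=1$, so $X=(w_1,\dots,w_m,\sqrt{t_1},\dots,\sqrt{t_n})\in\MG$ and $\mathfrak{M}(X)=(\mathbf{w},\mathbf{t})$. For injectivity of $\bar{\mathfrak{M}}$, suppose $\mathfrak{M}(X)=\mathfrak{M}(X')$ for two points $X=(w_1,\dots,w_m,z_1,\dots,z_n)$ and $X'=(w_1',\dots,w_m',z_1',\dots,z_n')$ in $\MG$. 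Then $w_k=w_k'$ for every $k$ and $|z_j|=|z_j'|$ for every $j$. Choose $u_j\in\s^1$ with $u_jz_j=z_j'$ whenever $z_j\neq 0$ (and arbitrary $u_j\in\s^1$ otherwise). Then $\mathbf{u}=(u_1,\dots,u_n)\in\mathbb{T}^n$ sends $X$ to $X'$, so $X$ and $X'$ lie in the same orbit.

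The routine part is the bookkeeping of equations; the only point that requires a little care is the surjectivity step, because one must check that $\mathbf{P}(\Lam)$ has been defined by exactly the inequalities/equalities that remain after eliminating the phases of the $z_j$ from the system \eqref{E15} plus the sphere equation — which is immediate by inspection. Finally, since both $\mathfrak{M}$ and the quotient map are continuous and $\MG$ is compact, $\bar{\mathfrak{M}}$ is a continuous bijection from a compact space onto $\mathbf{P}(\Lam)$, hence a homeomorphism, giving the desired identification of orbit spaces.
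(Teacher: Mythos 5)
Your proposal is correct and follows essentially the same route the paper takes, namely observing that $\frak{M}$ is $\T^n$-invariant and induces a bijection between orbits and points of $\mathbf{P}(\Lam)$; the paper states this correspondence in one sentence, and you simply supply the routine verification (image, injectivity on orbits, and the compactness argument for the homeomorphism). The only caveat is a sign inherited from the paper's own definition of $P_{\mathbf{w}}(\Lam)$, where the defining equation of $\MG$ actually forces $\sum_{j=1}^n t_j\lam_j=-\mathbf{w}^2$ rather than $+\mathbf{w}^2$, a discrepancy in the paper's text rather than in your argument.
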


\noi Therefore, $\frak{M}$ plays the role of the moment map
of the action and the topology of $\MG$ is completely described by the combinatorics of 
$\mathbf{P}(\Lam)$, $P(\Lam)$ and $\hat{P}(\Lam)$. \\

\noi From the above discussion we see that moment-angle manifolds of mixed type are also toric spaces (see \cite{BP}) and their topology is analogous to the moment-angle manifolds. Their topology is very rich and it is essentially reduced to combinatorics like cla\-ssical algebraic toric or quasi-toric varieties (see \cite{BP2}, \cite{BP} and \cite{BBCG}).\\

\noi It is an interesting problem to determine the topology of moment-angle manifolds of mixed type.  In analogy with the results of moment-angle manifolds des\-cribed in \cite{BM}, we state the following conjecture: \\

\noi {\bf Conjecture:} 
Given any sequence of finitely generated abelian groups $G_1,\dots, G_k$ there exist integers $m$, $n$ and an admissible configuration $\Lam$ such that for $j=1,\dots,k$ and some integer $\ell$,  $G_k$ is a direct summand of $H^{j+\ell+1}(\MG,\Z)$, where $\MG$ is the moment-angle manifold of mixed type  of dimension $N>k$  corresponding to $m$, $n$ and $\Lam$.\\

\noi In particular the conjecture and our results would imply that given any compact simplicial complex $K$ there exists a compact contact manifold $M$ such that $H^\ast(M,\Z)$ contains $H^\ast(K,\Z)$ as direct summand and in particular there would exist contact manifolds with arbitrary torsion in prescribed dimensions.\\

\noi Another aspect we would like to remark is that although the heat-method results of S. J. Altschuler and L. F. Wu are very beautiful, they are very difficult to implement except for dimension three and some special cases. Our method gives a series of nice examples were this method can actually be applied and it is possible that many other examples will be obtained by our  methods.\\

 \noi In conclusion: we have shown that moment-angle manifolds of mixed type give new examples of contact manifolds with very rich topology and in arbitrarily large dimensions.\\

\noi {\bf Acknowledgments.} We would like to thank professors Steven J. Altschuler ,  S. L\'opez de Medrano and  D. Pancholi for very useful observations and comments. In particular we are grateful with professor S. L\'opez de Medrano for suggesting the proof of proposition \ref{P4}.


\begin{thebibliography}{100}

\bibitem{Alt} Altschuler, Steven J, ``A geometric heat flow for one-forms on three dimensional manifolds", \emph{Illinois Journal of Mathematics}, Vol. 39, no. 1, pp. 98-118, 1995.

\bibitem{AltWu} Altschuler, Steven J. and Wu, Lani F., ``On Deforming Confoliations", \emph{J. Differential Geometry}, Vol. 54, pp. 75-97, 2000.

\bibitem{BBCG} Bahri, A. and  Bendersky, M. and Cohen, F. R. and Gitler, S.,  ``The polyhedral product functor: a method of decomposition for moment-angle complexes, arrangements and related spaces"., \emph{Adv. Math.}, Vol. 225, no. 3, pp. 1634-1668, 2010.

\bibitem{BLV} Barreto, Y. and L\'opez de Medrano S. and Verjovsky A., ``Open Book Structures on Moment-Angle Manifolds $Z^\C(\Lam)$ and Higher Dimensional Contact Manifolds", \emph{arXiv:1303.2671v1 [math.AT] 11 Mar 2013}.

 \bibitem{BM} Bosio, F and Meersseman, L.``Real Quadrics in $\C^n$, complex manifolds and convex polytopes"., \emph{Acta Math.}, Vol. 197, pp. 53-127, 2006.
       
\bibitem{BP} Buchstaber,V. M. and  Panov, T. E.  ``Torus actions and their applications in Topology and Combinatorics", \emph{ University Lecture Seriers, AMS}, 2002.

\bibitem{BP2} Buchstaber,V. M. and  Panov, T. E. `Toric Topology'', \emph{arXiv:1210.2368v2 [math.AT] 3 Mar. 2013}.

\bibitem{DJ} Davis, M. W, Januszkiewicz, T. ``Convex polytopes, coxeter orbifolds and torus actions''
\emph{Duke Math. Journal.} Vol. 62, No. 2,  pp. 417--451, 1991.

\bibitem{Elias} Eliashberg, Y., ``Topological characterization of Stein manifolds of dimension $>2$ ," \emph{International Journal of  Mathematical}, Vol. 1, No. 1, pp. 29-46,  1990.

\bibitem{ET} Eliashberg, Yakov M. and Thurston, W. P., ``Confoliations", \emph{American Mathematical Society. Lectures Series}, Vol. 13, 1998.

\bibitem{Giroux} Giroux, E., ``Geometrie de contact: de la dimension trois vers les dimensions superieures", \emph{ICM}, Vol. II, pp. 405-414, 2002.

\bibitem {LoGli} Gitler, S. and L\'opez de Medrano, S., ``Intersections of quadrics, moment-angle manifolds and connected sums", \emph{to appear in Geometry and Topology, 2013.}.

\bibitem{GL} G\'omez Gutierr\'ez, V. and L\'opez de Medrano, S., ``Topology of Intersections of Quadrics II", \emph{in preparation}. 

\bibitem{LN} Loeb J. J., Nicolau M., ``On the complex geometry of a class of non-K\"ahler manifolds", \emph{Journal: Israel Journal of Mathematics - ISR J MATH}, Vol. 110, no. 1 pp. 371-379, 1999.

\bibitem{SLM1} L\'{o}pez de Medrano, S., ``Singularities of homogeneous quadratic mappings", \emph{Revista de la Real Academia de Ciencias Exactas, F\'isicas y Naturales, Springer-Verlag.} Serie A. (DOI: 10.1007/s13398-012-0102-6). 

\bibitem{LV} L\'opez de Medrano, Santiago and Verjovsky, Alberto, ``A new family of complex, compact, non-symplectic manifolds",
\emph{Bol. Soc. Bras. Mat. }, Vol. 28, No. 2, pp. 253-269, 1997.

\bibitem{LM} Lutz, Robert and Meckert, Christiane, ``Structures de contact sur certaines sph\'eres exotiques", \emph{C. R. Acad. Sci. Paris S\'er. A-B }, Vol. 282, pp. A591-A593, 1976.


\bibitem{Mather} Mather, John, ``Notes on topological stability", \emph{available on his webpage at Princeton University}, 1970.

\bibitem{Meckert} Meckert, Christiane, ``Forme de contact sur la somme connexe de deux vari\'et\'es de contact de dimension impare", \emph{Annales De L'Institut Fourier} Vol. 32, No. 3 pp. 251-260, 1982.

\bibitem{Meer} Meersseman, L.,`` A new geometric construction of compact complex
manifolds in any dimension'', \emph{ Math. Ann.  }, 317, 79Ð115 (2000).

\bibitem{MV} Meersseman, Laurent and Verjovsky, Alberto,
``Holomorphic principal bundles over projective toric varieties",
\emph{J. reine angew, Math}, Vol. 572, pp. 57-96, 2004.

\bibitem{St}  Sternberg, Shlomo,  ``Lectures on differential geometry''. Second edition. With an appendix by Sternberg and Victor W. Guillemin.\emph{ Chelsea Publishing Co.}, New York, 1983. 

\bibitem{IV}  Vaisman,  Izu,  ``Lectures on the geometry of Poisson Manifolds''. \emph{ Birkh\"auser Verlag}, Germany, 1994. 

\bibitem{Weins} Weinstein, Alan,  ``Contact surgery and symplectic handlebodies''. \emph{Hokkaido Mathematical Journal}, 20 (2). 241, 1991.

\end{thebibliography}
\end{document}